\documentclass[11pt,reqno]{amsart}
\usepackage{geometry}
\usepackage{enumitem}
\usepackage{amsmath}
\usepackage{amssymb}
\usepackage{amsthm}
\usepackage{amsrefs}
\usepackage{amsfonts}
\usepackage[dvipsnames]{xcolor}
\usepackage{mathtools}
\usepackage{mathrsfs}
\usepackage{hyperref}

\makeatletter
\@namedef{subjclassname@2020}{%
  \textup{2020} MSC}
\makeatother

\hypersetup{
colorlinks   = true, 
urlcolor     = blue, 
linkcolor    = purple, 
citecolor   = blue 
}

\def\XXint#1#2#3{{\setbox0=\hbox{$#1{#2#3}{\int}$ }
\vcenter{\hbox{$#2#3$ }}\kern-.6\wd0}}

\makeatletter
\newcommand*{\rom}[1]{\expandafter\@slowromancap\romannumeral #1@}

\newcommand{\ind}{\protect\raisebox{2pt}{$\mathbf{1}$}}

\newcommand{\SL}{\text{SL}}

\newcommand{\X}{\mathcal{X}}

\newcommand{\R}{\mathbb{R}}

\newcommand{\Z}{\mathbb{Z}}

\newcommand{\N}{\mathbb{N}}

\newcommand{\bthm}{\begin{thm}}
\newcommand{\ethm}{\end{thm}}
\newcommand{\bproof}{\begin{proof}}
\newcommand{\eproof}{\end{proof}}
\newcommand{\blem}{\begin{lem}}
\newcommand{\elem}{\end{lem}}
\newcommand{\brem}{\begin{rem}}
\newcommand{\erem}{\end{rem}}
\newcommand{\eeqn}{\end{equation}}
\newcommand{\eeqnn}{\end{equation*}}
\newcommand{\beqn}{\begin{equation}}
\newcommand{\beqnn}{\begin{equation*}}
\newcommand{\eprop}{\end{prop}}
\newcommand{\eexm}{\end{exm}}
\newcommand{\enexm}{\end{nexm}}
\newcommand{\ecor}{\end{cor}}
\newcommand{\bcor}{\begin{cor}}
\newcommand{\bexm}{\begin{exm}}
\newcommand{\bnexm}{\begin{nexm}}
\newcommand{\bprop}{\begin{prop}}
\newcommand{\bdefn}{\begin{defn}}
\newcommand{\edefn}{\end{defn}}
\newcommand{\benum}{\begin{enumerate}}
\newcommand{\eenum}{\end{enumerate}}

\newcommand{\bfv}{\mathbf{v}}
\newcommand{\bfw}{\mathbf{w}}
\newcommand{\bfe}{\mathbf{e}}
\newcommand{\cN}{\mathcal{N}}

\newcommand{\supp}{\text{supp}}

\newcommand{\id}{\mathbf{1}}
\DeclareMathOperator{\inj}{inj}

\newcommand{\rS}{\mathscr{S}}
\newcommand{\an}{\varkappa}
\newcommand{\Lie}{\mathrm{Lie}}

\title[Cone upgrade and Khintchine--Schmidt on fractals]{Cone upgrade for effective equidistribution and weighted Khintchine--Schmidt theorem on fractals}

\begin{document}
\theoremstyle{plain}
\newtheorem{thm}{Theorem}[section]
\newtheorem{lem}[thm]{Lemma}
\newtheorem{prop}[thm]{Proposition}
\newtheorem{cor}[thm]{Corollary}
\newtheorem{question}{Question}
\newtheorem{con}{Conjecture}
\theoremstyle{definition}
\newtheorem{defn}[thm]{Definition}
\newtheorem{exm}[thm]{Example}
\newtheorem{nexm}[thm]{Non Example}
\newtheorem{prob}[thm]{Problem}

\theoremstyle{remark}
\newtheorem{rem}[thm]{Remark}

\author{Gaurav Aggarwal}
\address{\textbf{Gaurav Aggarwal} \\
Institut f\"ur Mathematik, Universit\"at Z\"urich, 8057 Z\"urich, Switzerland}
\email{gaurav.aggarwal@math.uzh.ch}

\author{Alexander Gorodnik}
\address{\textbf{Alexander Gorodnik} \\
Institut f\"ur Mathematik, Universit\"at Z\"urich, 8057 Z\"urich, Switzerland}
\email{alexander.gorodnik@math.uzh.ch}

\date{}

\thanks{ G.~A. and A.~G. gratefully acknowledge support from the SNF grant 200020--212617.}

\subjclass[2020]{Primary 11J83; Secondary 11J13, 28A80, 37A17, 37A44}
\keywords{Diophantine approximation, ergodic theory, flows on homogeneous spaces}


\begin{abstract}
We prove a fractal analogue of the weighted theorems of Khintchine and Schmidt for
products of self-similar measures on the real line whose defining iterated
function systems share a common contraction ratio. The proof
translates the counting problem into homogeneous dynamics and builds on recent
work of B\'enard, He and Zhang, who treated the unweighted case. Our main new
ingredient is a \emph{cone upgrade}: an effective double equidistribution theorem for translates of
product self-similar measures, uniform over diagonal elements whose weights
range over a fixed cone in the positive Weyl chamber, deduced from effective
equidistribution for a single weight. This yields a partial
effective form of a theorem of Khalil, Luethi and Weiss.
\end{abstract}

\maketitle

\tableofcontents

\section{Introduction}

Let $\Psi=(\psi_1,\ldots,\psi_m)$ be an $m$-tuple of non-increasing
functions and, for $\theta\in\R^m$, consider the system of inequalities
\begin{align}
\label{eq:intro system}
    |q\theta_i+p_i|\leq\psi_i(q)
    \qquad(1\leq i\leq m),
    \qquad (p,q)\in\Z^m\times\N .
\end{align}
With respect to Lebesgue measure on $\R^m$ the question is classical, and it
was answered from the outset in the \emph{weighted} form, in which the $\psi_i$
are allowed to differ and the quality of approximation is prescribed
separately in each coordinate. Writing $\psi=\psi_1\cdots\psi_m$, Khintchine's
theorem asserts that almost every $\theta$ admits infinitely many solutions
of~\eqref{eq:intro system} if $\sum_q\psi(q)$ diverges, and that almost no
$\theta$ does if it converges. In the
divergent case Schmidt~\cite[Thm.~1]{S60} went further and obtained an
asymptotic formula, with an error term, for the number of solutions with
$q\leq T$. What survives of this picture when Lebesgue measure is replaced by
a measure carried by a fractal is a much more recent story, and one in which
the weighted problem has so far remained out of reach.

Mahler~\cite{Mahler} asked whether the points of the middle-thirds Cantor set
behave, from the point of view of Diophantine approximation, like typical real
numbers, and Kleinbock, Lindenstrauss and
Weiss~\cite[Question~10.1]{KLW} recast his question
by asking whether measures supported on fractals satisfy a Khintchine
dichotomy. The two halves of the dichotomy have been approached
separately, and at first only for particular approximating functions. On the
convergence side, the scale that has received most attention is
$\psi(q)=q^{-1-\varepsilon}$, where the assertion is that almost no point of the
fractal is very well approximable. This was proved by Weiss~\cite{Weiss2001}
for a class of measures on the line including the middle-thirds Cantor measure,
and extended in~\cite{KLW} to all \emph{friendly} measures on $\R^d$, in the
stronger multiplicative form, which already covers every weight at this scale;
see also~\cites{PollingtonVelani, DFSUa, DFSUb}. On the divergence side, the
borderline function $\psi(q)=\varepsilon/q$ was treated by Einsiedler, Fishman
and Shapira~\cite{EinsiedlerFishmanShapira} for missing-digit Cantor measures,
and then, for irreducible self-similar measures, by Simmons and
Weiss~\cite{SimmonsWeiss}, who showed that for every $\varepsilon>0$ almost
every point admits infinitely many solutions; see
also~\cites{Khalilsing, dattajana24, AG24Random, AG24Levy, AG25b, AG25}.

For a general $\psi$, Khintchine's theorem on fractals was proved by Khalil
and Luethi~\cite{KhalilLuethi} for self-similar measures of large enough
dimension arising from a rational, finitely supported, contractive iterated
function system satisfying the open set condition, and then in full generality
on $\R$ by B\'enard, He and Zhang~\cite{benard24} and, on $\R^d$, by the same
authors~\cite{benard2026}, who also obtained Schmidt's counting asymptotic.
All of these results are unweighted. Weighted statements of a more restricted
kind are due to Prohaska, Sert and Shi~\cite{ProhaskaSertShi}, who could
however only treat measures defined by an iterated function system depending on
the weight, and to~\cite[Thm.~1.2]{KhalilWeiss}, where the set of weighted
Dirichlet improvable vectors on a carpet is shown to be null.
What lies behind these results is the equidistribution of
the diagonal translates of the fractal measure in the space of lattices. Both
\cite{KhalilLuethi} and \cites{benard24, benard2026} establish this effectively,
but only for a single diagonal flow, and it is this restriction to one flow that
confines their Diophantine consequences to the unweighted case. In the opposite
direction, Khalil, Luethi and Weiss~\cite{KhalilWeiss} proved equidistribution
along arbitrary diagonal sequences, hence for all weights at once, but only
qualitatively and only for carpets; they explicitly ask for an effective
version, noting that it would have important Diophantine applications.

This paper supplies such an effective version for products of self-similar
measures with a common contraction ratio, uniformly over weights in a compact
subcone of the positive Weyl chamber (Theorem~\ref{thm:Double equi}), and
deduces from it the weighted Khintchine--Schmidt theorem
(Theorem~\ref{main thm:counting}).

The mechanism is the Dani correspondence of Kleinbock and
Margulis~\cites{KM98,KM3}: solutions of~\eqref{eq:intro system} with $q$ of a
given size correspond to short vectors in the lattice $g_t(\bfw)u(\theta)\Z^{m+1}$,
where the diagonal element $g_t(\bfw)$ is determined by $\Psi$, and the counting
problem becomes one about the distribution of the translated measure
$g_t(\bfw)_*\bigl(u(\cdot)\Gamma\bigr)_*\nu$ in the space of unimodular lattices.
Two features distinguish the weighted problem. First, the weight $\bfw$ varies
with $t$, so that one needs equidistribution not for a single flow but uniformly
over a family of them; the hypothesis that $\Psi$ be $\alpha$-balanced
(Definition~\ref{defn:balanced}) is exactly what confines this family to a
compact subcone of the positive Weyl chamber. Second, the Borel--Cantelli
argument in the divergent case requires control of the correlations between two
different times, and hence a \emph{double} equidistribution statement, whose
error term must be governed by the separation of the two diagonal elements; this
is what the $\beta$-proportionality hypothesis provides. The remaining
ingredients --- quantitative non-divergence, moment estimates for Siegel
transforms, and the passage from an effective equidistribution statement to a
pointwise count --- follow the by now standard scheme of~\cites{KM98, KSW,
benard2026}.

The method of \cite{benard2026} does not apply directly. It rests on a
bootstrap for the dimension of the associated random walk at a single scale,
which is available because conjugation by the equal-weight flow acts on the
unipotent group by a homothety and therefore preserves the self-similar
structure of the measure. A general weight expands the coordinates of the
unipotent group at different rates, so that the single scale is replaced by
several, and the unipotent group is then no longer the full expanding
horospherical subgroup. We do not extend that method. Instead we factor a
diagonal element of arbitrary weight into an equal-weight element and a second
element of arbitrary weight, and use the self-similar decomposition of the
measure to absorb the second factor into the base point, so that the
equidistribution theorem of \cite{benard2026} has to be applied only to the
equal-weight factor; this factorisation is the mechanism behind the cone
upgrade, and it is carried out in
Proposition~\ref{prop:change of weights}. It is for this reason that the
theorem is needed at arbitrary base points, and this is also where
$\alpha$-balance is essential: it keeps the equal-weight part of the flow a
fixed proportion of the whole.

\medskip

The paper is organised as follows. Section~\ref{sec:notation} fixes notation and
records the decomposition of a self-similar measure that will be used
throughout, together with the mixing estimate of Theorem~\ref{thm:mixing}.
Section~\ref{sec:non div} constructs the height function and establishes the
non-divergence estimates, uniformly over weights in $[\alpha,1]^m$.
Section~\ref{sec:Double equi} contains the cone upgrade, that is, the proof of
Theorem~\ref{thm:Double equi}.
Sections~\ref{sec:con case} and~\ref{sec:div case} treat the convergence and the
divergence case of Theorem~\ref{main thm:counting} respectively, and
Section~\ref{sec:proof of main} completes its proof by verifying the hypotheses
of Theorem~\ref{thm:Double equi} for the measures at hand.

Throughout the paper, we fix an integer $m \in \N$ with $m \geq 2$.

  \begin{defn}
\label{defn:ss}
A probability measure $\sigma$ on $\R$ will be called a \emph{self-similar measure with equal contraction ratio} if there exist $c\in(0,1)$, a finite set $E$ and weights $\lambda_e>0$ $(e\in E)$ with
\[
\sum_{e\in E} \lambda_e=1,
\]
and affine contractions
\[
\phi_e:\R\to\R,\qquad
\phi_e(x)=cx+y_e,\qquad e\in E,
\]
with no common fixed point, such that
\[
\sigma=\sum_{e\in E} \lambda_e(\phi_e)_*\sigma.
\]
\end{defn}
\begin{defn}
\label{defn:balanced}
    Let $\Psi= (\psi_1, \ldots, \psi_m)$ be an $m$-tuple of non-increasing functions from $(0,\infty)$ to $(0,1)$. Then
    \begin{enumerate}
        \item $\Psi$ is called $\alpha$-balanced for $\alpha \in (0,1)$ if for all $1 \leq i \leq m$ and $a \in (0,\infty)$, we have
        \[
        \psi_i(a) \leq (\psi_1(a) \cdots \psi_m(a))^\alpha.
        \]
        \item $\Psi$ is called $\beta$-proportional with $\beta>1$ if for all $1 \leq i,j \leq m$ and $a,b \in (0,\infty)$, we have
        \[
        \frac{\log(\psi_i(ab)) - \log(\psi_i(a))}{\log(\psi_j(ab)) - \log(\psi_j(a))} \leq \beta.
        \]
        Here we use the notation that $0/0:=1$ and $c/0= \infty$ for all $c \in \R \setminus \{0\}$.
    \end{enumerate}
\end{defn}

\begin{rem}
\label{rem:balanced}
Writing $\psi=\psi_1\cdots\psi_m$ and multiplying the $m$ inequalities in
Definition~\ref{defn:balanced}(1), we obtain $\psi(a)\leq\psi(a)^{m\alpha}$; since
$\psi(a)<1$, this forces $\alpha\leq1/m$. Moreover, taking
logarithms and dividing by $\log\psi(a)<0$ shows that $\alpha$-balance is equivalent to
\[
    w_i(a):=\frac{\log\psi_i(a)}{\log\psi(a)}\geq\alpha,
    \qquad 1\leq i\leq m .
\]
Since $w_1(a)+\cdots+w_m(a)=1$, this says precisely that $(w_1(a),\ldots,w_m(a))$ is a weight
belonging to $[\alpha,1]^m$. It is in this form that $\alpha$-balance will be used; see the
proofs of Proposition~\ref{prop:convergence case} and Lemma~\ref{lem:imp change}.
\end{rem}

\begin{exm}
\label{exm:power}
The model case is that of power functions: for $a_1,\ldots,a_m>0$ with
$a_1+\cdots+a_m=1$, let
\[
    \psi_i(q)=q^{-a_i},\qquad q>1,\ 1\leq i\leq m ,
\]
so that $\psi(q)=q^{-1}$. Then $\psi_i(q)\leq\psi(q)^\alpha$ reads $a_i\geq\alpha$, so that
$\Psi$ is $\alpha$-balanced precisely for $\alpha\leq\min_i a_i$; note that
$\min_ia_i\leq1/m$, in accordance with Remark~\ref{rem:balanced}. Moreover \[
    \frac{\log\psi_i(ab)-\log\psi_i(a)}{\log\psi_j(ab)-\log\psi_j(a)}
    =\frac{a_i}{a_j}
\]
whenever $b\neq1$, so that $\Psi$ is $\beta$-proportional whenever
$\beta\geq\max_ia_i/\min_ia_i$ and $\beta>1$. Thus Theorem~\ref{main thm:counting} applies to every such
$\Psi$, the classical unweighted case being $a_1=\cdots=a_m=1/m$. The
values of $\psi_i$ on $(0,1]$ are immaterial and have been left unspecified:
they influence $\cN_T(\theta,\Psi)$ only through the solutions
of~\eqref{eq:intro1} with $q=1$, of which there are finitely many.
\end{exm}

The main arithmetic result of the paper is the following.
      \begin{thm}
      \label{main thm:counting}
       Let $\nu_1,\ldots,\nu_m$ be self-similar probability measures on $\R$
       sharing a common contraction ratio $c\in(0,1)$, and set $\nu=\nu_1\otimes\cdots\otimes\nu_m$. Let $\Psi= (\psi_1, \ldots, \psi_m)$ be an $m$-tuple of non-increasing functions from $(0,\infty)$ to $(0,1)$. For $\theta \in \R^m$, let $\cN_T(\theta, \Psi )$ denote the number of  integer vectors $(p_1, \ldots, p_m,q)\in\Z^m\times\N$ satisfying
\begin{align}
\label{eq:intro1}
    |p_i+q\theta_i| \le \psi_i(q)
    \quad (1\le i\le m),
    \qquad
    q \le T.
\end{align}
       Then the following holds.
       \begin{enumerate}
           \item If $\Psi$ is $\alpha$-balanced for some $\alpha \in (0,1)$, and
           \[
           \sum_{q\in\N} \psi_1(q) \cdots \psi_m(q) <\infty,
           \]
           then for $\nu$-almost every $\theta \in \R^m$, we have
           \[
        \sup_{T>0}\ \cN_T(\theta, \Psi)<\infty ;
        \]
       \item If $\Psi$ is $\alpha$-balanced and $\beta$-proportional for $\alpha \in (0,1)$ and $\beta>1$, and
       \[
       \sum_{q\in\N} \psi_1(q) \cdots \psi_m(q) =\infty,
       \]
       then for $\nu$-almost every $\theta \in \R^m$, we have
        \begin{align}
        \label{eq:main thm asym}
        \cN_T(\theta, \Psi ) &\mathrel{\sim} 2^m \sum_{q=1}^T \psi_1(q) \cdots \psi_m(q),
        \end{align}
        as $T \rightarrow \infty$.
       \end{enumerate}
    \end{thm}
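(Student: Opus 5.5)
The plan is to pass to homogeneous dynamics via the Dani--Kleinbock--Margulis correspondence and split into the convergence and divergence cases.

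\textbf{Setup.} Cut $q$ into dyadic-type blocks $q\in[e^{t},e^{t+1})$, $t\in\N$. For $q$ in the $t$-th block the inequalities \eqref{eq:intro1} become, up to bounded distortion, the statement that $g_t(\bfw)u(\theta)\Z^{m+1}$ contains a nonzero vector in a fixed compact box. Here
\[
g_t(\bfw)=\mathrm{diag}\bigl(e^{w_1 s},\ldots,e^{w_m s},e^{-t}\bigr),
\]
with $s=-\log\psi(e^t)$ and $w_i=w_i(e^t)$ as in Remark~\ref{rem:balanced}. The factor is renormalised to be unimodular, which is possible up to the factor $e^{t}\psi(e^t)$ absorbed into the count. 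By Remark~\ref{rem:balanced}, $\alpha$-balance places every $\bfw$ in $[\alpha,1]^m$, a compact subcone. Thus $\cN_T(\theta,\Psi)$ is a sum over $t\le\log T$ of values $\widehat f_t(g_t(\bfw_t)u(\theta)\Gamma)$ of Siegel transforms of indicator-type functions. Their $\mu_{\X}$-means are $\asymp 2^m\sum_{q\in\text{block}}\psi(q)$ by Siegel's formula. The indicators are smoothed from above and below at scale $e^{-\delta t}$ in the standard way.

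\textbf{Convergence case.} It suffices to show $\sum_t\int \widehat f_t(g_t(\bfw_t)u(\theta)\Gamma)\,d\nu(\theta)<\infty$; Borel--Cantelli then applies. I would not rely on equidistribution alone, since $\widehat f_t$ is unbounded. Instead I would bound the integral by truncating $\widehat f_t$ with the height function of Section~\ref{sec:non div}. The part below height $H$ is controlled by single equidistribution: Theorem~\ref{thm:Double equi} with one time, applied uniformly over $\bfw\in[\alpha,1]^m$, gives the main term $\ll\sum_{\text{block}}\psi(q)$ plus an exponentially small error. The part above height $H$ is controlled by the uniform non-divergence (moment) estimate, giving $\ll H^{-\kappa}$. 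Choosing $H=e^{\epsilon t}$ makes everything summable because $\sum\psi(q)<\infty$. Small $t$ contributes finitely many solutions.

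\textbf{Divergence case.} Set $S_T(\theta)=\sum_{t\le\log T}\widehat f_t(g_t(\bfw_t)u(\theta)\Gamma)$ and $E_T=\int S_T\,d\nu\sim 2^m\sum_{q\le T}\psi(q)$. This asymptotic follows from the single-time step above, using divergence to swamp the error. The key estimate is the variance bound
\[
\int |S_T-E_T|^2\,d\nu\ll E_T^{2-\eta}.
\]
Given this, the standard Schmidt/Harman lemma (as in \cite{KM98,KSW,benard2026}) yields $S_T\sim E_T$ almost surely. The sandwich of smoothed functions then gives \eqref{eq:main thm asym}.

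Expanding the square leaves cross terms $\int\widehat f_t\widehat f_{t'}$. When $|t-t'|$ is small these are handled crudely by second-moment bounds for Siegel transforms, again truncating by height using Section~\ref{sec:non div}. When $t'-t$ is large, I would apply the double equidistribution of Theorem~\ref{thm:Double equi} to the pair $g_t(\bfw_t),g_{t'}(\bfw_{t'})$. This replaces the integral by $\int\widehat f_t\circ g_t\cdot\widehat f_{t'}\circ g_{t'}\,d\mu_\X$, up to an error decaying in the separation. That integral is in turn $\approx\int\widehat f_t\,d\mu\int\widehat f_{t'}\,d\mu$ by the mixing estimate Theorem~\ref{thm:mixing}, up to error $e^{-\kappa\|g_{t'}g_t^{-1}\|}$.

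\textbf{Main obstacle.} The step I expect to be hardest is guaranteeing that $g_{t'}(\bfw_{t'})g_t(\bfw_t)^{-1}$ is itself deep in a cone of the Weyl chamber with size $\gg t'-t$. Only then are both the double equidistribution error and the mixing error governed by the time separation. Its exponents are $\log\psi_i(e^{t})-\log\psi_i(e^{t'})$, and this is exactly what $\beta$-proportionality controls: all these increments are comparable within a factor $\beta$. Combined with $\sum_i(\cdot)\approx t'-t$, each increment is $\gg(t'-t)/(m\beta)$. Finally, summing the resulting errors together with the truncation losses must give a power saving relative to $E_T^2$. For that I would choose truncation heights $e^{\epsilon t}$ and smoothing scales as small powers, balancing them against the exponents supplied by Theorem~\ref{thm:Double equi}.
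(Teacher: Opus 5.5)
There are genuine gaps, and they sit exactly where the paper has to do extra work.

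First, the Dani elements you use need not lie in the cone. Remark~\ref{rem:balanced} is about the weights $w_i(Q)$, but your matrix $\operatorname{diag}(e^{w_1s},\dots,e^{w_ms},e^{-t})$ has determinant $e^{s-t}\neq1$. To use Theorem~\ref{thm:Double equi} or Siegel's formula on $\X$ you must pass to $\operatorname{diag}(\psi_1(Q)^{-1}r,\dots,\psi_m(Q)^{-1}r,Q^{-1}r)$, with $r=(Q\psi(Q))^{1/(m+1)}$ and a box of side $r$. Its $i$-th exponent $\log(\psi_i(Q)^{-1}r)$ can be negative. For example, $m=2$, $\psi_1(q)=q^{-1}$, $\psi_2(q)=q^{-5}$ ($q>1$) is $\tfrac16$-balanced and $5$-proportional, and gives $\operatorname{diag}(Q^{-2/3},Q^{10/3},Q^{-8/3})$. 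This is not $g_s(\bfw)$ for any weight, so neither Proposition~\ref{prop:change of weights} nor Theorem~\ref{thm:Double equi} applies to it. The paper shows the renormalised weight lies in $[\alpha/2,1]^m$ only under the extra condition $\psi(Q)^{2-(2m+1)\alpha}Q^{2-m\alpha}\geq1$ (Lemma~\ref{lem:dani}). It disposes of the remaining scales by monotonicity. In the convergence case it enlarges $\psi_i(2^n)$ to $B_n^{w_{n,i}}$, which keeps the proportions and has geometrically decaying volume (Case~2 of Proposition~\ref{prop:convergence case}). In the divergence case it drops $K_{\mathrm{tiny}}$, whose volumes are summable, and for the upper bound reruns the convergence argument to show those blocks give a.s. finitely many solutions. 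You also never verify hypothesis~\eqref{eq:e3}. That is the one place the common contraction ratio enters: it makes the product system an irreducible family of similarities $x\mapsto cx+y$ of $\R^m$, so the theorem of B\'enard--He--Zhang supplies~\eqref{eq:e3} for $\bfw=(1/m,\dots,1/m)$.

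Second, first moments of unbounded Siegel transforms are the wrong tool. In the convergence case $\sum_t\int\widehat f_t\,d\nu$ can be infinite. Take $\nu_1=\nu_2$ generated by $x\mapsto cx$, $x\mapsto cx+1-c$ with $c<1/8$, and $\psi_1=\psi_2=(q\log^2q)^{-1/2}$. The solutions with $p=0$ alone give $\int\sup_T\cN_T\,d\nu\geq\sum_q\nu_1([0,\psi_1(q)/q])^2=\infty$. Non-divergence controls the measure of the high part (and moments of $\lambda_i^{\eta_i}$ for small $\eta_i$), not the integral of $\widehat f_t$ over it, so your bound $\ll H^{-\kappa}$ fails. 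The paper instead applies Borel--Cantelli to the events $A_n$ and bounds $\nu(A_n)\le\int f_n(a_nu(\theta)\Gamma)\,d\nu(\theta)$ with smooth $0\leq f_n\leq1$, where plain equidistribution suffices. For the same reason $\int S_T\,d\nu\not\sim2^m\sum_{q\leq T}\psi(q)$ in general. The truncation must therefore be built into $S_T$, and for the upper bound you must show it is a.s. eventually inactive (Lemma~\ref{lem:dist is log}).

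Finally, Cauchy--Schwarz for nearby times gives covariances of size $\sqrt{y_ty_{t'}}$, with no decay in $t'-t$. With truncation at height $e^{\epsilon t}$ the $C^{l_0}$-norms grow like $e^{c\epsilon t}$, so Theorem~\ref{thm:Double equi} is useful only for $t'-t\gg\epsilon t'$. For $\psi(q)=1/(q\log q)$ your bound on the close pairs is then $\asymp\epsilon\log T$, far above $E_T^2\asymp(\log\log T)^2$. What is missing is decorrelation at moderate separation. The paper applies single equidistribution at the later time to $f_1\cdot f_2\circ(a_ja_k^{-1})$, then exponential mixing with $L^2$-Sobolev norms, which Rogers' formula keeps small (Case~2 of Lemma~\ref{lem:imp change}). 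It uses Theorem~\ref{thm:Double equi}, which already outputs $\mu_{\X}(F_1)\mu_{\X}(F_2)$ with no mixing step afterwards, only when $t_k\geq t_j^{1+\varepsilon_0}$. Large boxes are handled by a deterministic lattice count ($K_{\mathrm{big}}$).

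Relatedly, the increments $\log\psi_i(e^t)-\log\psi_i(e^{t'})$ sum to $\log(\psi(e^t)/\psi(e^{t'}))$, not to $\approx t'-t$; this sum may vanish or be much larger. What $\beta$-proportionality actually provides is $\rho\gg\log(t_k/t_j)$, where $t_k=\tau^kr_k^{-1}$ is the renormalised time.
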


 Neither $\alpha$-balance nor $\beta$-proportionality has a counterpart in the classical results quoted above; they are artefacts of the dynamical method,
confining the diagonal elements produced by the Dani correspondence to a
compact subcone of the positive Weyl chamber.

   Theorem~\ref{main thm:counting} is proved via homogeneous dynamics: we convert the problem into a lattice point counting problem and use the Borel--Cantelli argument to deduce the count for almost every point. To make it precise, we introduce the following notation.
Let
\[
    G=\SL_{m+1}(\R), \qquad \Gamma=\SL_{m+1}(\Z),
\]
and let $\X=G/\Gamma$, which can be identified with the space of all
unimodular lattices in $\R^{m+1}$ via the map
\[
    g\Gamma \mapsto g\Z^{m+1}.
\]
We denote by $\mu_{\X}$ the unique $G$-invariant probability measure on $\X$.
Throughout the paper, we fix a right-invariant metric on $G$ and let it
induce a metric on $\X$. We define the \emph{injectivity radius} of
$x\in\X$ by
\[
    \inj(x)
    =
    \sup\left\{
        \varepsilon>0:
        g\mapsto gx
        \text{ is injective on the $\varepsilon$-ball centered at the identity in $G$}
    \right\}.
\]

    \begin{defn}
   Throughout this paper, a vector $\bfw= (w_1,\ldots, w_m) \in \R^m$ will be called a \emph{weight} if $w_i >0$ for all $i$ and
    $w_1+ \ldots+ w_m=1$.
\end{defn}

    For a weight $\bfw= (w_1, \ldots, w_m)$, we define
 \[
 g_t(\bfw)= \begin{pmatrix}
        e^{tw_1} \\ & \ddots \\ && e^{tw_m} \\ &&&e^{-t}
    \end{pmatrix},
 \]
 and define for $\theta \in \R^m$, the matrix
 \[
 u(\theta)= \begin{pmatrix}
     \id_m & \theta \\ & 1
 \end{pmatrix}.
 \]
This is the flow used in \cite[(1.5)]{KhalilWeiss}, and the
condition $\bfw\in[\alpha,1]^m$, which will be imposed throughout, is a
quantitative form of the requirement, going back to~\cite{KW08}, that a sequence
of diagonal elements \emph{drift away from the walls} of the Weyl chamber.

 The main dynamical result of the paper is the following.
 \begin{thm}
\label{thm:Double equi}
Suppose that $\nu=\nu_1\otimes\cdots\otimes\nu_m$,
where each $\nu_i$ is a self-similar probability measure with equal
contraction ratio. Suppose further that there exist a weight $\bfw$,
constants $\delta_{\bfw},c_{\bfw}>0$ and an integer $l_0\geq1$ such that for every $f\in C_c^\infty(\X)$, $x\in\X$, and
$t\geq0$,
\begin{align}
\label{eq:e3}
\int_{\R^m}
f(g_t(\bfw)u(\theta)x)\,d\nu(\theta)
=
\mu_{\X}(f)
+
O\!\left(
e^{-\delta_{\bfw}t}
\inj(x)^{-c_{\bfw}}
\|f\|_{C^{l_0}}
\right).
\end{align}

Then, for every $\alpha\in(0,1)$, there exist constants
$\delta_\alpha,c_\alpha>0$ such that, for every pair of weights
$\bfv,\bfv'\in[\alpha,1]^m$, every
$F_1,F_2\in C_c^\infty(\X)$, $x_1,x_2\in\X$, and $t\geq r\geq0$, we have
\begin{align}
\int_{\R^m}
F_1(g_t(\bfv)u(\theta)x_1)
F_2(g_r(\bfv')u(\theta)x_2)\,d\nu(\theta)&=
\mu_{\X}(F_1)\mu_{\X}(F_2) \nonumber \\
&+
O\!\left(
e^{-\delta_\alpha \rho((t,\bfv),(r,\bfv'))} \inj(x_1)^{-c_\alpha}
\|F_1\|_{C^{l_0}}\|F_2\|_{C^1}
\right) \nonumber\\
&+
O\!\left(
e^{-\delta_\alpha r}
\inj(x_2)^{-c_\alpha}
\bigl|\mu_{\X}(F_1)\bigr| \|F_2\|_{C^{l_0}}
\right), \label{eq:dd3}
\end{align}
where
\begin{align}
    \label{eq:def rho}
    \rho((t,\bfv),(r,\bfv'))
=
\min_{1\leq i\leq m}
\left\{
t+t v_i-r-rv_i'
\right\}.
\end{align}
\end{thm}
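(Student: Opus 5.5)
The plan is to derive the double statement from a \emph{single} equidistribution statement that is uniform over weights in $[\alpha,1]^m$ and over base points. That single statement is in turn reduced to the hypothesis \eqref{eq:e3} for the one weight $\bfw$. Throughout I use the product structure of $\nu$ and the common contraction ratio $c$. Iterating the self-similarity of each factor $\nu_i$ to its own depth $n_i$ writes $\nu$ as a convex combination of affine copies $\theta\mapsto \theta_0+D\theta$ of $\nu$, with $D=\mathrm{diag}(c^{n_1},\dots,c^{n_m})$. The observation that makes this useful is
\[
u(\theta_0+D\theta)=u(\theta_0)\,a\,u(\theta)\,a^{-1},
\]
where $a=\lambda\,\mathrm{diag}(c^{n_1},\dots,c^{n_m},1)$ and $\lambda$ is the scalar normalising $a$ into $G$. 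So each piece is again an integral against $\nu$, with a diagonal element absorbed into the flow and into the base point.

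\emph{Step 1 (cone upgrade for one translate).} Fix $\bfv\in[\alpha,1]^m$ and $t\ge0$. I factor
\[
g_t(\bfv)=g_s(\bfw)\,b,
\]
with $s=\kappa t$ for a small constant $\kappa=\kappa(\alpha,\bfw)>0$. By $\alpha$-balance, $b$ is diagonal with expanding exponents $tv_i-sw_i\ge(\alpha-\kappa)t>0$ on the $u$-coordinates, so $b$ lies in the positive chamber. I then choose the depths $n_i$ so that $c^{-n_i}$ matches the expansion of $b$ on the $i$-th coordinate up to a bounded factor. Then $b\,u(\theta_0+D\theta)=u(\cdot)\,k\,u(\theta)\,\tilde b$, with $k$ diagonal and bounded. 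Commuting $k$ past $u(\theta)$ and then past $g_s(\bfw)$ costs only a bounded change of variables and bounded Sobolev norms. Applying \eqref{eq:e3} on each piece gives
\[
\int f(g_t(\bfv)u(\theta)x)\,d\nu=\mu_\X(f)+O\Big(e^{-\delta_\bfw s}\|f\|_{C^{l_0}}\int \inj(y_{\theta_0})^{-c_\bfw}\,d\nu(\theta_0)\Big),
\]
where $y_{\theta_0}$ is essentially $b\,u(\theta_0)x$ at the coarser scale.

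\emph{Step 2 (main obstacle: controlling the new base points).} The difficulty is that the base points $y_{\theta_0}$ are themselves translates by a large element, so their injectivity radii can be small. I would bound the average of $\inj(y_{\theta_0})^{-c}$ using the height function and the non-divergence estimates of Section~\ref{sec:non div}, which are uniform for weights in $[\alpha,1]^m$. These should give
\[
\int \inj(b\,u(\theta)x)^{-c}\,d\nu(\theta)\ll \inj(x)^{-c'}
\]
for small $c$, uniformly in $b$. This is where uniformity of the non-divergence over the cone, and the choice of $c_\alpha$ small enough, must be arranged. Losing a little of the exponent $\delta$ through Hölder's inequality is acceptable.

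\emph{Step 3 (double equidistribution).} Now take $t\ge r$ and decompose $\nu$ at depths $n_i$ with $c^{n_i}\approx e^{-r(1+v_i')}$. On each piece, $g_r(\bfv')u(\theta)x_2$ moves by $O(1)$ in $\theta$; more precisely it lies within $O(e^{-\eta r})$ of a fixed point once the depth is increased slightly. Hence $F_2$ can be replaced by a constant $F_2(z_{\theta_0})$ at cost $\|F_2\|_{C^1}e^{-\eta r}$. On the same piece, the $F_1$-integral becomes an integral of the form in Step 1, with flow $g_t(\bfv)a$. Its expanding exponents are $t+tv_i-r-rv_i'$ up to additive constants, so the effective time is comparable to $\rho((t,\bfv),(r,\bfv'))$. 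I would then apply the (generalised, non-homogeneous-weight) Step 1 to each piece; the flow is no longer exactly of the form $g_{t'}(\bfv'')$, but the factorisation argument of Step 1 works for any diagonal element whose exponents are bounded below proportionally. Summing over pieces yields two terms:
\[
\mu_\X(F_1)\int F_2(g_r(\bfv')u(\theta)x_2)\,d\nu+O\big(e^{-\delta\rho}\inj(x_1)^{-c}\|F_1\|_{C^{l_0}}\|F_2\|_{C^1}\big),
\]
where the error term again uses Step 2 to control the base points $a^{-1}u(\theta_0)x_1$. Finally, Step 1 applied to $F_2$ at time $r$ gives the last error term in \eqref{eq:dd3}. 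If $\rho$ is small compared with $r$, the first error term is trivial anyway, so no further case split is needed beyond absorbing constants.
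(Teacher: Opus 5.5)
Your Steps 1 and 2 are essentially the paper's Propositions~\ref{prop:change of weights} and~\ref{prop: inductive} with $F_0\equiv1$. Your interpolation against the trivial bound, combined with Jensen on the height function, is a harmless variant of the paper's threshold $\varsigma$ plus Markov. The gap is in Step 3, where you decompose $\nu$ at the scale of the $F_2$-flow $g_r(\bfv')$. The resulting per-piece flow $g_t(\bfv)a\approx g_t(\bfv)g_r(\bfv')^{-1}$ is not a cone element. Its adjoint action on the $u$-coordinates expands by at least $e^{\rho}$, $\rho=\rho((t,\bfv),(r,\bfv'))$, but its first $m$ diagonal entries $\approx e^{tv_i-rv_i'}$ can be $<1$. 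For $m=2$, $\bfv=(\tfrac1{10},\tfrac9{10})$, $\bfv'=(\tfrac9{10},\tfrac1{10})$, $t=2r$, one gets $\rho=\tfrac{3r}{10}>0$ but $g_t(\bfv)g_r(\bfv')^{-1}=\operatorname{diag}(e^{-7r/10},e^{17r/10},e^{-r})$. For such an $h$ your ``generalised Step 1'' is false uniformly in the base point. Since $u(\theta)\bfe_1=\bfe_1$, the lattice $hu(\theta)\Z^{m+1}$ contains $e^{-7r/10}\bfe_1$ for every $\theta$, so it never equidistributes although $\inj(\Z^{m+1})\asymp1$. Correspondingly, the non-divergence of Section~\ref{sec:non div} is available only for cone elements. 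So you cannot bound each piece by $\inj(a^{-1}u(\theta_0)x_1)^{-c}$ and then average with Step 2.

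The size of your margin is also wrong. Refining the depth by $e^{-\eta r}$ leaves $F_1$ a minimal expansion exponent of $\rho-\eta r$. If $r\gg\rho$ this is negative, although $e^{-\delta_\alpha\rho}$ must still decay there; so your closing remark (that small $\rho$ relative to $r$ makes the first error term trivial) does not hold. If $r\ll\rho$ (e.g.\ $r=0$), the freezing error $\|F_1\|_{C^0}\|F_2\|_{C^1}e^{-\eta r}$ does not decay at all. It is covered by neither error term of~\eqref{eq:dd3}, since the second one carries $|\mu_\X(F_1)|$ rather than $\|F_1\|_{C^0}$.

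Both problems disappear if the decomposition scale is tied to the $F_1$-flow. Take $\ell=\rho/4$ and $s=t-\ell$, decompose at the scale of the cone element $g_s(\bfv)$, and treat the tail $g_\ell(\bfv)$ by Step 1. This is legitimate because Step 1 shows that \eqref{eq:e3} holds for $\bfv$ itself, with constants depending only on $\alpha$. The base points $\widetilde{g_s(\bfv)}u(\phi)x_1$ are then cone translates, controlled by Step 2. The oscillation of $\theta\mapsto F_2(g_r(\bfv')u(\theta)x_2)$ on each piece is $\ll\max_ie^{r(1+v_i')-s(1+v_i)}\|F_2\|_{C^1}\ll e^{-\rho/2}\|F_2\|_{C^1}$. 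This is exactly the paper's proof: Proposition~\ref{prop: inductive} with $F_0(\theta)=F_2(g_r(\bfv')u(\theta)x_2)$, followed by Step 1 applied to $\nu(F_0)$.
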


Theorem~\ref{thm:Double equi} is the main contribution of the paper: it
upgrades effective equidistribution for a \emph{single} weight $\bfw$ to
effective \emph{double} equidistribution, uniformly over all pairs of weights
in the cone $[\alpha,1]^m$. We refer to this passage from a single weight to a
whole cone of weights as a \emph{cone upgrade}. In the present setting the hypothesis~\eqref{eq:e3}
is supplied by~\cite[Thm.~1.2]{benard2026} with $\bfw=(1/m,\ldots,1/m)$, and
Theorem~\ref{main thm:counting} is deduced from the conclusion in
Sections~\ref{sec:con case}--\ref{sec:proof of main}.

The contraction ratios of $\nu_1,\ldots,\nu_m$ are not assumed here to be
equal to one another; the common ratio of
Theorem~\ref{main thm:counting} is needed only in
Section~\ref{sec:proof of main}, in order to verify~\eqref{eq:e3}.

Assuming Theorem~\ref{thm:Double equi}, we briefly indicate how the proof of Theorem~\ref{main thm:counting} follows. By the Dani correspondence, the number of solutions to~\eqref{eq:intro1} can be approximated by counting short vectors in the lattices
\[
g_t(\bfw)u(\theta)\Gamma,
\]
where both the notion of shortness and the diagonal element $g_t(\bfw)$ are determined by $\Psi$; see Sections~\ref{sec:con case} and~\ref{sec:div case} for details.

To estimate the same, we combine \cite[Thm.~1.2]{benard2026} with Theorem~\ref{thm:Double equi} to obtain effective double equidistribution for $\nu$, uniformly over diagonal elements whose weights range over a fixed cone in the positive Weyl chamber. The resulting estimates are obtained by evaluating $g_t(\bfw)u(\theta)\Gamma$ against suitable Siegel transforms.

The $\alpha$-balanced property of $\Psi$ ensures that the corresponding diagonal elements remain in a fixed cone, and hence allows Theorem~\ref{thm:Double equi} to be applied uniformly. This gives the convergence case.

For the divergence case, we require the additional assumption that $\Psi$ is
$\beta$-proportional. It guarantees that the separation of the
diagonal elements $g_t(\bfw)$ and $g_s(\bfv)$ arising from $\Psi$ is
comparable with the separation of the times,
\[
    \rho((t,\bfw),(s,\bfv))\gg t-s
    \qquad (t\geq s),
\]
the implied constant depending only on $m$ and $\beta$. Since $\rho$ is a
minimum over the coordinates, this is not automatic; it is what makes the error
term of Theorem~\ref{thm:Double equi} decay in the separation of the times, and
hence makes the correlation estimate in the Borel--Cantelli argument usable.

We remark that the derivation of Theorem~\ref{main thm:counting} from Theorem~\ref{thm:Double equi} closely parallels the proof in~\cite{benard2026}.

\begin{rem}
Using the techniques developed in this paper, it is possible to extend Theorem~\ref{thm:Double equi} to
multi-equidistribution. More
precisely, one can obtain an effective estimate for integrals of the form
\[
\int_{\R^m}
f_0(\theta)
\prod_{j=1}^n
f_j\bigl(g_{t_j}(\bfw_j)u(\theta)x_j\bigr)
\,d\nu(\theta),
\]
where $\bfw_1,\ldots,\bfw_n\in[\alpha,1]^m$,
$t_1,\ldots,t_n>0$, $x_1,\ldots,x_n\in\X$, $f_0\in C_b^\infty(\R^m)$ and
$f_1,\ldots,f_n\in C_c^\infty(\X)$.

Such an estimate can be obtained by repeated applications of
Propositions~\ref{prop:height function} and~\ref{prop: inductive},
but we do not pursue it here: double
equidistribution already suffices for
Theorem~\ref{main thm:counting}.
\end{rem}

\medskip

\noindent {\bf Acknowledgements.}
The authors would like to thank Seonhee Lim for helpful discussions
during the initial stages of the project. We also thank the MATRIX Institute
and the organisers of the \emph{Ergodic Theory, Diophantine Approximation and
Related Topics 2026} program for their hospitality and for providing a
stimulating environment in which this work was initiated. The first author
would also like to thank Shreyasi Datta, Manuel Hauke-Treuer, and Benjamin
Ward for organising the Great Ocean Road trip, which was a particularly
enjoyable and memorable experience.
 The draft of this paper was proofread and polished with
the assistance of Claude.

\medskip
\section{Notation and preliminaries}
\label{sec:notation}

\subsection{Self-similar measures and their decompositions}
\label{sec:fractals}

Throughout the paper, for each $1\leq i\leq m$, we fix a self-similar
probability measure $\nu_i$ on $\R$ with equal contraction ratio $c_i$ and
support $\mathcal F_i$. We set
\[
    \mathcal F=\prod_{i=1}^m\mathcal F_i,
    \qquad
    \nu=\bigotimes_{i=1}^m\nu_i.
\]

For a diagonal matrix
\[
d=
\begin{pmatrix}
d_1&&&\\
&\ddots&&\\
&&d_m&\\
&&&(d_1\cdots d_m)^{-1}
\end{pmatrix},
\]
with $d_i\geq1$ for all $i$, we define $n_i(d)$ to be the unique integer satisfying
\[
    c_i^{-n_i(d)}
    \leq
    d_i(d_1\cdots d_m)
    \mathrel<
    c_i^{-n_i(d)-1}.
\]
We also define
\begin{align}
    \label{eq: def A}
    A^{(d)}
    =
    \operatorname{diag}
    \bigl(
    c_1^{-n_1(d)},
    \ldots,
    c_m^{-n_m(d)}
    \bigr),
\end{align}
and
\begin{align}
\label{eq:def tilde d}
    \widetilde d
    =
    \det\bigl(A^{(d)}\bigr)^{-1/(m+1)}
    \begin{pmatrix}
    A^{(d)}&\\
    &1
    \end{pmatrix}.
\end{align}

By construction, each diagonal entry of $d\widetilde d^{-1}$ belongs to the
compact interval
\[
    \Bigl[(c_1\cdots c_m)^{\frac{1}{m+1}},\
    \max_{1\leq i\leq m}c_i^{-1}\Bigr]
    \subset
    \bigl[c_1\cdots c_m,(c_1\cdots c_m)^{-1}\bigr].
\]
Indeed, with $\Delta=d_1\cdots d_m$ and $P=\det\bigl(A^{(d)}\bigr)$, the diagonal
entries of $d\widetilde d^{\,-1}$ are $P^{\frac1{m+1}}d_ic_i^{n_i(d)}$ and
$P^{\frac1{m+1}}\Delta^{-1}$, while the definition of $n_i(d)$ gives
$\Delta^{-1}\leq d_ic_i^{n_i(d)}\leq c_i^{-1}\Delta^{-1}$ and, multiplying these over
$1\leq i\leq m$, also
$(c_1\cdots c_m)^{\frac1{m+1}}\leq P^{\frac1{m+1}}\Delta^{-1}\leq1$.

We shall need the following elementary but fundamental
decomposition, which we state for a single self-similar measure on $\R$.

\begin{lem}
\label{lem:self-similar decomposition}
Let $\sigma$ be a self-similar probability measure on $\R$ with equal contraction ratio, say
\[
    \sigma=\sum_{e\in E}\lambda_e\,(\phi_e)_*\sigma,
    \qquad
    \phi_e(x)=cx+y_e ,
\]
where $c\in(0,1)$, $E$ is a finite set, $(\lambda_e)_{e\in E}$ is a probability vector and
$y_e\in\R$. For an integer $N\geq0$ and a word
$\mathbf e=(e_1,\ldots,e_N)\in E^N$ put
\[
    \lambda_{\mathbf e}=\lambda_{e_1}\cdots\lambda_{e_N},
    \qquad
    y_{\mathbf e}=\sum_{k=1}^{N}c^{\,k-1}y_{e_k},
\]
and define the finitely supported probability measure
\[
    \sigma^{(N)}=\sum_{\mathbf e\in E^N}\lambda_{\mathbf e}\,\delta_{y_{\mathbf e}} .
\]
Then, for every bounded measurable function $f:\R\to\R$,
\begin{align}
\label{eq:self-similar decomposition}
    \int_{\R}f(x)\,d\sigma(x)
    =
    \int_{\R}\int_{\R}f\bigl(c^{N}x+y\bigr)\,d\sigma(x)\,d\sigma^{(N)}(y).
\end{align}
\end{lem}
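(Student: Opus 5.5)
The plan is to argue by induction on $N$, the case $N=0$ being trivial: $E^0$ consists of the empty word, $y_{\emptyset}=0$, $\sigma^{(0)}=\delta_0$ and $c^0x+0=x$. The case $N=1$ is exactly the self-similarity relation. Indeed,
\[
\int f\,d\sigma=\sum_{e\in E}\lambda_e\int f(\phi_e(x))\,d\sigma(x)=\sum_{e}\lambda_e\int f(cx+y_e)\,d\sigma(x),
\]
and $\sigma^{(1)}=\sum_e\lambda_e\delta_{y_e}$. Both sides are finite sums of integrals of bounded measurable functions against probability measures, so there are no integrability issues. Fubini applies trivially because $\sigma^{(N)}$ is finitely supported.

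For the inductive step, assume the identity for $N$ and apply it to $f$. This gives
\[
\int f\,d\sigma=\sum_{\mathbf e\in E^N}\lambda_{\mathbf e}\int f(c^Nx+y_{\mathbf e})\,d\sigma(x).
\]
For each fixed $\mathbf e$, the function $x\mapsto f(c^Nx+y_{\mathbf e})$ is again bounded and measurable. Applying the case $N=1$ to it replaces the inner integral by
\[
\sum_{e\in E}\lambda_e\int f\bigl(c^N(cx+y_e)+y_{\mathbf e}\bigr)\,d\sigma(x)=\sum_{e}\lambda_e\int f\bigl(c^{N+1}x+y_{\mathbf e}+c^Ny_e\bigr)\,d\sigma(x).
\]
Now form the concatenated word $\mathbf e'=(e_1,\dots,e_N,e)\in E^{N+1}$. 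Then $\lambda_{\mathbf e'}=\lambda_{\mathbf e}\lambda_e$, and $y_{\mathbf e'}=\sum_{k=1}^{N}c^{k-1}y_{e_k}+c^{N}y_e=y_{\mathbf e}+c^Ny_e$, which matches the definition with $k=N+1$. Concatenation is a bijection $E^N\times E\to E^{N+1}$, so the double sum becomes
\[
\sum_{\mathbf e'\in E^{N+1}}\lambda_{\mathbf e'}\int f(c^{N+1}x+y_{\mathbf e'})\,d\sigma(x).
\]
This is the right-hand side of \eqref{eq:self-similar decomposition} for $N+1$.

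The only point requiring care is the order of the letters. The last letter must be appended with weight $c^N$, and this requires applying self-similarity to the innermost variable $x$, not to the outer one. Done the other way, one gets $y_e+cy_{\mathbf e}$, which corresponds to prepending the letter. That version is also fine after reindexing, but it does not match the stated formula for $y_{\mathbf e}$ without relabelling. So I will apply the $N=1$ identity to the inner integral as above.

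Finally, $\sigma^{(N)}$ is a probability measure because $\sum_{\mathbf e}\lambda_{\mathbf e}=(\sum_e\lambda_e)^N=1$. Nothing else is needed, and no hypothesis such as the absence of a common fixed point enters.
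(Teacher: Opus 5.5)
Your proof is correct and is essentially the paper's argument: the paper iterates the self-similarity relation $N$ times and checks by induction that $\phi_{e_1}\circ\cdots\circ\phi_{e_N}(x)=c^Nx+y_{\mathbf e}$, which is the same computation you carry out directly on the integrals. One small correction: your caveat about the order of the letters is unnecessary, since $y_e+c\,y_{\mathbf e}=y_{(e,\mathbf e)}$ follows immediately from the stated formula, so prepending a letter works just as well as appending one (and prepending is exactly the induction the paper uses).
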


\begin{proof}
Since all the maps $\phi_e$ have the same linear part $c$, an $N$-fold composition has linear
part $c^{N}$ and differs from the other $N$-fold compositions only by a translation. Precisely,
\[
    \phi_{e_1}\circ\cdots\circ\phi_{e_N}(x)=c^{N}x+y_{\mathbf e}
    \qquad\text{for all }\mathbf e\in E^N ,
\]
as one checks by induction on $N$: the case $N=0$ is trivial, and if the formula holds for
$(e_2,\ldots,e_N)$ then
\[
    \phi_{e_1}\bigl(c^{N-1}x+y_{(e_2,\ldots,e_N)}\bigr)
    =c^{N}x+c\,y_{(e_2,\ldots,e_N)}+y_{e_1}
    =c^{N}x+y_{\mathbf e}.
\]
Iterating the self-similarity relation $N$ times gives
\[
    \sigma=\sum_{\mathbf e\in E^N}\lambda_{\mathbf e}\,
    \bigl(\phi_{e_1}\circ\cdots\circ\phi_{e_N}\bigr)_*\sigma
    =\sum_{\mathbf e\in E^N}\lambda_{\mathbf e}\,
    \bigl(c^{N}\,\cdot+\,y_{\mathbf e}\bigr)_*\sigma ,
\]
and integrating $f$ against both sides yields~\eqref{eq:self-similar decomposition}.
\end{proof}

The measure $\sigma^{(N)}$ is thus the distribution of the translation part of a random
cylinder of level $N$: it records \emph{where} the level-$N$ copies of $\sigma$ sit, each such
copy being a rescaling of $\sigma$ by the factor $c^{N}$. Note that $\sigma^{(0)}=\delta_0$,
and that $y_{\mathbf e}+c^{N}\supp(\sigma)\subseteq\supp(\sigma)$ for every
$\mathbf e\in E^{N}$, so that $\supp(\sigma^{(N)})$ lies within distance
$c^{N}\sup_{x\in\supp(\sigma)}|x|$ of $\supp(\sigma)$.

We shall also need the following standard Frostman-type bound.

\begin{lem}
\label{lem:frostman}
Let $\sigma$ be a self-similar probability measure on $\R$ with equal
contraction ratio $c\in(0,1)$, in the sense of Definition~\ref{defn:ss}.
Then there exist $C,\delta>0$ such that
\[
    \sigma([x-y,x+y])\leq C\, y^{\delta}
    \qquad\text{for all } x\in\R,\ y>0 .
\]
In particular $\sigma$ is non-atomic.
\end{lem}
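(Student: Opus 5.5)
The plan is to prove the Frostman bound by iterating the decomposition of Lemma~\ref{lem:self-similar decomposition}. The single quantity we need to control is
\[
    M(y)=\sup_{x\in\R}\sigma([x-y,x+y]).
\]
The goal is a geometric-decay estimate of the form
\[
    M(c^{N}r)\leq \eta^{N}\qquad\text{for all } N\geq0,
\]
for some fixed $r>0$ and $\eta<1$. Once this holds, take $y\in(0,r]$ and choose $N$ with $c^{N+1}r<y\leq c^{N}r$. Then
\[
    M(y)\leq \eta^{N}\leq C y^{\delta},
    \qquad \delta=\frac{\log\eta}{\log c}>0 .
\]
For $y>r$ the bound is trivial after enlarging $C$, since $M(y)\le 1$.

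The key one-step inequality comes from the decomposition with $N=1$:
\[
    \sigma([x-y,x+y])=\sum_{e\in E}\lambda_e\,\sigma\bigl([\phi_e^{-1}(x)-y/c,\ \phi_e^{-1}(x)+y/c]\bigr).
\]
Using only this identity gives $M(y)\le M(y/c)$, which is useless. The gain must come from the hypothesis that the $\phi_e$ have no common fixed point. That hypothesis forces $|E|\geq2$ and the translations $y_e$ to be not all equal. Since $c$ is the same for all $e$, the points $\phi_e^{-1}(x)=(x-y_e)/c$ are then spread apart: for two letters $e\neq e'$ with $y_e\neq y_{e'}$ we have
\[
    |\phi_e^{-1}(x)-\phi_{e'}^{-1}(x)|=\frac{|y_e-y_{e'}|}{c}=:D>0,
\]
uniformly in $x$.

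So I would first show that $\sigma$ cannot put almost all of its mass into two intervals of radius $r$ whose centres are at distance $D$. The precise statement to prove is the following: there exist $r>0$ and $\kappa<1$ such that for all $x$ and any letters $e\ne e'$ as above,
\[
    \lambda_e\,\sigma(I_e)+\lambda_{e'}\,\sigma(I_{e'})\le (\lambda_e+\lambda_{e'})\,\kappa,
\]
where $I_e, I_{e'}$ are the corresponding intervals of radius $r$. To get this, take $r<D/2$, so that $I_e$ and $I_{e'}$ are disjoint. Then $\sigma(I_e)+\sigma(I_{e'})\le1$, and a convexity estimate gives
\[
    \lambda_e a+\lambda_{e'}b\le \max(\lambda_e,\lambda_{e'})
    \quad\text{whenever } a+b\le1 ,
\]
which yields $\kappa<1$ after normalising.

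This controls only the one-scale quantity. To make it multiplicative I would rather work with a self-improving statement at every scale: for $y\le r c^{N}$, expand to level $N$, so that
\[
    \sigma([x-y,x+y])=\sum_{\mathbf e\in E^N}\lambda_{\mathbf e}\,\sigma\bigl([\,\cdot\,\pm y c^{-N}]\bigr).
\]
Then group words by their last letter and apply the pairing argument at each level. Concretely, induct on $N$ to prove $M(c^{N}r)\le \eta\, M(c^{N-1}r)$ with
\[
    \eta=1-\min(\lambda_e,\lambda_{e'})\,(1-\text{overlap})<1 .
\]

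The main obstacle is exactly this inductive gain. The bound $\sigma(I_e)+\sigma(I_{e'})\le1$ only says that the two pieces have total mass at most one, not that each is at most $\eta M$. So I would instead use the sharper bound
\[
    \lambda_e a+\lambda_{e'}b\le \lambda_e a+\lambda_{e'}\min\bigl(b,\ 1-a\bigr),
\]
combined with $a,b\le M(\cdot)$. If $M(y/c)$ is bounded away from $1$ (it is, once $\sigma$ is non-atomic and not a point mass, since no common fixed point means $\sigma\ne\delta_p$), then for $r$ small $M(r)\le 1-\epsilon_0$. From there I would aim for the cleaner route of showing that every cylinder contains two sub-cylinders at distance comparable to its diameter, each carrying a fixed proportion of its mass. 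If the direct induction proves messy, that yields the standard estimate $\sigma(B(x,c^{kN_0}r))\le(1-\lambda_{\min}^{N_0})^{k}$.
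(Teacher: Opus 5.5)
Your reduction is sound: a bound $M(c^{N}r)\le\eta^{N}$ with fixed $r>0$, $\eta<1$ does give $M(y)\le Cy^{\delta}$ with $\delta=\log\eta/\log c$. The gap is that you never prove this geometric decay, and the inductive inequality $M(c^{N}r)\le\eta\,M(c^{N-1}r)$ you commit to cannot come out of the two-letter pairing. Disjointness of $I_e$ and $I_{e'}$ only gives $\sigma(I_e)+\sigma(I_{e'})\le1$. Once $M\le\frac12$ this already follows from $\sigma(I_e),\sigma(I_{e'})\le M$, so the configuration $\sigma(I_e)=\sigma(I_{e'})=M$ is not excluded and no factor $\eta<1$appears. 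The variant with $\min(b,1-a)$ has the same defect, since then $\min(b,1-a)=b$. Expanding to level $N$ and pairing by the last letter uses the separation at a single level and bounds everything beneath it by $1$. That yields only $\sigma([x-y,x+y])\le1-\min(\lambda_e,\lambda_{e'})$, uniformly in $N$. Likewise, $M(r)\le1-\epsilon_0$ is a one-scale gain that nothing in your scheme propagates. As a side remark, it needs only $\sigma\neq\delta_p$; appealing to non-atomicity there is circular.

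The missing idea is the one you only name in your last sentence. The paper gives no argument of its own: it checks that irreducibility on the line is the no-common-fixed-point condition and quotes B\'enard--He--Zhang, Lem.~2.1(ii), pointing to Feng--Lau, Prop.~2.2, whose argument is the one below. Write $K=\supp\sigma$, $\lambda_{\min}=\min_e\lambda_e$, and $\phi_{\mathbf e}=\phi_{e_1}\circ\cdots\circ\phi_{e_n}$ for $\mathbf e\in E^n$. Pick $p\neq q$ in $K$ and $N_0$ with $c^{N_0}\operatorname{diam}K<|p-q|/4$. Choose $\mathbf f_1,\mathbf f_2\in E^{N_0}$ with $p\in\phi_{\mathbf f_1}(K)$ and $q\in\phi_{\mathbf f_2}(K)$. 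Then, for every $\mathbf e\in E^n$, the sets $\phi_{\mathbf e\mathbf f_1}(K),\phi_{\mathbf e\mathbf f_2}(K)\subset\phi_{\mathbf e}(K)$ are at distance $\ge c^{n}|p-q|/2$.

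The recursion must then be run not on $\sigma(B)$ or $M$, but on the covering weight $P_n(B)=\sum\{\lambda_{\mathbf e}:\mathbf e\in E^{n},\ \phi_{\mathbf e}(K)\cap B\neq\emptyset\}$. This dominates $\sigma(B)$ by Lemma~\ref{lem:self-similar decomposition}. An interval $B$ of radius $\rho\le c^{n}|p-q|/8$ meets at most one of the two sub-cylinders of each $\phi_{\mathbf e}(K)$, whence $P_{n+N_0}(B)\le(1-\lambda_{\min}^{N_0})P_n(B)$. Applying this for $n=0,N_0,\dots,(k-1)N_0$ gives $\sigma(B)\le(1-\lambda_{\min}^{N_0})^{k}$ whenever $\rho\le c^{(k-1)N_0}|p-q|/8$.

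Definition~\ref{defn:ss} imposes no separation condition. So the \emph{fixed proportion of its mass} in your sketch has to be the symbolic weight $\lambda_{\mathbf f}$ in this recursion. It cannot be the $\sigma$-mass of the set $\phi_{\mathbf e\mathbf f}(K)$, which overlaps can inflate.
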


\begin{proof}
This is \cite[Lem.~2.1(ii)]{benard24}, applied to the randomised
system $\sum_{e\in E}\lambda_e\delta_{\phi_e}$: the latter is contractive and
finitely supported, and it is irreducible because on the line irreducibility
means precisely that the maps $\phi_e$ have no common fixed point. For such
systems a short self-contained proof is given in~\cite[Prop.~2.2]{FengLau};
see also \cite[Prop.~4.2]{benard2026} for the corresponding statement near
affine subspaces of $\R^d$.
\end{proof}

For each $i\in\{1,\ldots,m\}$, we define $\nu_i^{(d)}$ to be the probability
measure obtained by applying Lemma~\ref{lem:self-similar decomposition} to
$\sigma=\nu_i$ with $N=n_i(d)$; by~\eqref{eq:self-similar decomposition} it satisfies
\begin{align}
\label{eq:c1}
 \int_{\R}f(x)\,d\nu_i(x)
 =
 \int_{\R}\int_{\R}
 f(c_i^{n_i(d)}x+y)
 \,d\nu_i(x)\,d\nu_i^{(d)}(y).
\end{align}
Identity~\eqref{eq:c1}, together with its coordinatewise
consequence~\eqref{eq:c2} and the resulting decomposition~\eqref{eq: measure decompose}, is
fundamental for everything that follows: it is what converts a translate of the fractal by a
large diagonal element into an average of \emph{bounded} unipotent perturbations of translates
by $\widetilde d$, and it is used in this form in the proofs of
Corollary~\ref{cor:iteration}, Proposition~\ref{prop:height function}
and Proposition~\ref{prop: inductive}.

We define
\[
    \nu^{(d)}
    =
    \bigotimes_{i=1}^m\nu_i^{(d)}.
\]
Applying~\eqref{eq:c1} coordinatewise, we obtain
\begin{align}
\label{eq:c2}
    \int_{\R^m}f(\theta)\,d\nu(\theta)
    =
    \int_{\R^m}\int_{\R^m}
    f((A^{(d)})^{-1}\theta+\phi)
    \,d\nu(\theta)
    \,d\nu^{(d)}(\phi).
\end{align}

Combining~\eqref{eq:c2} with the identity
\[
    \widetilde d\,u(\theta)\widetilde d^{-1}
    =
    u(A^{(d)}\theta),
\]
we obtain
\begin{align}
\label{eq: measure decompose}
    \int_{\R^m}
    f(\widetilde d\,u(\theta)x)
    \,d\nu(\theta)
    =
    \int_{\R^m}\int_{\R^m}
    f(u(\theta)\widetilde d\,u(\phi)x)
    \,d\nu(\theta)
    \,d\nu^{(d)}(\phi).
\end{align}

Finally, fix $M>0$ such that
$\operatorname{supp}(\nu)\subset[-M,M]^m$, and let
\begin{align}
\label{eq:def Omega}
    \Omega
    =
    \bigl\{
    \operatorname{diag}(d_1,\ldots,d_{m+1})\,u(\theta):\
    \theta\in[-M,M]^m,\
    d_i\in[c_1\cdots c_m,(c_1\cdots c_m)^{-1}],\
    d_1\cdots d_{m+1}=1
    \bigr\},
\end{align}
a compact subset of $G$.

\subsection{Height function}
\label{subsec:Height function}

The notation introduced in this subsection will be used only in
Section~\ref{sec:non div}.

For $1\leq l\leq m+1$, let
\[
    V_l=\bigwedge^l\R^{m+1},
    \qquad
    V=\bigoplus_{l=1}^{m+1}V_l.
\]
We equip $V_l$ with the natural action of $G$ induced by
$g\mapsto\bigwedge^l g$, and equip $V$ with the corresponding direct-sum
action. Let $\{\bfe_1,\ldots,\bfe_{m+1}\}$ denote the standard basis of
$\R^{m+1}$. For each index set
\[
    I=\{i_1<\cdots<i_l\}\subset\{1,\ldots,m+1\},
\]
we define
\[
    \bfe_I=\bfe_{i_1}\wedge\cdots\wedge\bfe_{i_l}.
\]
The collection of $\bfe_I$ with $\#I=l$ forms a basis of $V_l$.

For $v\in V$, write
\[
    v=\sum_I v_I\bfe_I,
\]
where the sum is over all index sets $I$, and define the norm
\[
    \|v\|=\max_I |v_I|.
\]
For $g\in G$, we define the corresponding operator norm by
\[
    \|g\|
    =
    \sup\{\|gv\|:v\in V,\ \|v\|=1\}.
\]
For a compact subset $Q\subset G$, we set
\[
    \|Q\|
    =
    \sup_{g\in Q}\max\{\|g\|,\|g^{-1}\|\}.
\]

For a discrete subgroup $\Lambda\leq\R^{m+1}$ of rank $l\geq1$, let
$v_\Lambda\in V_l/\{\pm1\}$ be the wedge product
\[
    v_\Lambda=v_1\wedge\cdots\wedge v_l,
\]
where $v_1,\ldots,v_l$ is any $\Z$-basis of $\Lambda$. This is independent
of the choice of basis. We define
\[
    \|\Lambda\|=\|v_\Lambda\|,
    \qquad
    \|\{0\}\|=1.
\]
Thus, $\|\Lambda\|$ is the covolume of $\Lambda$ in its real span, up to
the choice of the above norm.

For $\Lambda\in\X$, let $P(\Lambda)$ denote the collection of primitive
subgroups of $\Lambda$, namely,
\[
    P(\Lambda)
    =
    \left\{
        L\leq\Lambda:
        L=\Lambda\cap\operatorname{span}_{\R}(L)
    \right\}.
\]
For $1\leq i\leq m$ and $x=g\Gamma\in\X$, define
\[
    \lambda_i(x)
    =
    \sup\left\{
        \|\Lambda_i\|^{-1}:
        \Lambda_i\in P(g\Z^{m+1}),
        \ \operatorname{rank}(\Lambda_i)=i
    \right\}.
\]

Given $\boldsymbol{\hat{\eta}}=(\eta_1,\ldots,\eta_m)\in\R_{>0}^m$ and
$\varepsilon>0$, we define the height function
\begin{align}
    \label{eq:def f e eta}
      f_{\varepsilon,\boldsymbol{\hat{\eta}}}(x)
    =
    \varepsilon^{-1}
    +
    \sum_{i=1}^m\lambda_i(x)^{\eta_i}.
\end{align}

The following lemma relates the height function to the injectivity radius.

\begin{lem}
\label{lem:inj height}
For every $\boldsymbol{\hat{\eta}}\in\R_{>0}^m$ and $\varepsilon>0$, there
exist constants $\kappa_1,\kappa_2>0$ such that, for every $x\in\X$,
\begin{align}
\label{eq:a5}
    \inj(x)^{-\kappa_1}
    \ll
    f_{\varepsilon,\boldsymbol{\hat{\eta}}}(x)
    \ll
    \inj(x)^{-\kappa_2}.
\end{align}
\end{lem}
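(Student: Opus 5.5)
The plan is to compare both sides of \eqref{eq:a5} with the single quantity
\[
    \lambda_{\max}(x)=\max_{1\leq i\leq m}\lambda_i(x),
\]
and to prove two power-type comparisons: $\lambda_{\max}(x)\asymp$ a power of $\inj(x)^{-1}$ up to constants, and $f_{\varepsilon,\boldsymbol{\hat{\eta}}}(x)\asymp$ a power of $\lambda_{\max}(x)$.

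First I record that $\lambda_1(x)\geq c_0>0$ uniformly on $\X$. Indeed, for any primitive subgroup $L$ of rank $i$, Minkowski's theorem gives $\|L\|\ll 1$ for suitable choices. For example, $\lambda_1(x)=1/\|v_{\min}\|$, and the shortest vector of a unimodular lattice has norm $\ll 1$. Hence $\lambda_{\max}(x)\geq c_0$, and therefore $\varepsilon^{-1}\ll_\varepsilon \lambda_{\max}(x)^{\eta}$ for any $\eta>0$. With $\eta_{\min}=\min_i\eta_i$ and $\eta_{\max}=\max_i\eta_i$, this yields
\[
    \lambda_{\max}(x)^{\eta_{\min}}\ll f_{\varepsilon,\boldsymbol{\hat{\eta}}}(x)\ll \lambda_{\max}(x)^{\eta_{\max}} .
\]
The lower bound holds because the $i$ attaining the maximum gives $\lambda_i^{\eta_i}\geq c_0^{\eta_i-\eta_{\min}}\lambda_{\max}^{\eta_{\min}}$. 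The upper bound holds because each term satisfies $\lambda_i^{\eta_i}\ll\lambda_{\max}^{\eta_{\max}}$, using $\lambda_{\max}\gg1$. This reduces the lemma to showing
\[
    \inj(x)^{-a}\ll\lambda_{\max}(x)\ll\inj(x)^{-b}
\]
for some $a,b>0$.

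Next I use Minkowski's successive minima and reduction theory. Let $\mu_1\leq\cdots\leq\mu_{m+1}$ be the successive minima of $x=g\Z^{m+1}$, so that $\mu_1\cdots\mu_{m+1}\asymp1$. For a rank-$i$ primitive subgroup $L$, one has $\|L\|\gg\mu_1\cdots\mu_i$, with equality up to constants for the subgroup spanned by a reduced basis. Hence $\lambda_i(x)\asymp(\mu_1\cdots\mu_i)^{-1}$. Since every $\mu_j\ll\mu_{m+1}$ and $\mu_{m+1}\ll\mu_1^{-m}$, each product $(\mu_1\cdots\mu_i)^{-1}$ lies between $\mu_1^{-1}$ (the case $i=1$) and $\mu_1^{-C}$ for some $C=C(m)$. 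Therefore
\[
    \lambda_{\max}(x)\asymp_{\text{powers}}\mu_1(x)^{-1}.
\]

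Finally, I compare $\mu_1$ with the injectivity radius, which is the standard fact $\inj(x)\asymp_{\text{powers}}\mu_1(x)$. For the bound $\inj(x)\gg\mu_1^{C'}$, I use that any $g=hg'$ with $h$ a nontrivial element of $g\Gamma g^{-1}$ close to the identity must move some reduced basis vectors by amounts controlled by $\|g\|\|g^{-1}\|$. With a Siegel-set representative, $\|g\|,\|g^{-1}\|\ll\mu_1^{-C}$, and a nonidentity integer matrix $\gamma$ satisfies $\|g\gamma g^{-1}-1\|\gg\|g\|^{-1}\|g^{-1}\|^{-1}$. For the reverse bound $\inj(x)\ll\mu_1^{c}$, I take a short primitive vector $v$ and construct a unipotent element $h$ fixing the lattice, of the form $h=1+v\otimes\ell$ with $\ell$ integral on $x$. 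Its distance to the identity is $\ll\mu_1\mu_{m+1}$; this is small only when $\mu_1$ is small relative to $\mu_{m+1}$. The cleaner route is to exhibit $h$ directly as a product of $g$ with elementary integer matrices in a Siegel set, giving $d(h,1)\ll\mu_1/\mu_{m+1}\leq\mu_1$. This shows $\inj(x)\ll\mu_1$ whenever $\mu_1$ is small; when $\mu_1\gg1$, $x$ ranges over a compact set and both quantities are bounded above and below.

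I expect this last comparison to be the main obstacle, specifically making the Siegel-set argument precise under the fixed right-invariant metric. I would cite it as standard rather than redo it. Everything else is bookkeeping with the successive minima.
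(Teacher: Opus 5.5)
Your proposal is correct and follows essentially the paper's route. You reduce $f_{\varepsilon,\boldsymbol{\hat{\eta}}}$ to powers of $\lambda_1(x)=\mu_1(x)^{-1}$ via Minkowski's second theorem; the paper does this directly with $\lambda_i\ll\lambda_1^i$ rather than through $\lambda_{\max}$, which is the same bookkeeping. You then compare $\mu_1$ with $\inj(x)$ through a Siegel-set representative, which the paper does by reducing to a diagonal $a$ and using $\inj(a\Gamma)\asymp\min_{i\neq j}a_i/a_j$.

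One slip is worth fixing. Your first estimate $\ll\mu_1\mu_{m+1}$ for $h=1+v\otimes\ell$ is wrong: you need $\ell(v)=0$ with $\ell$ in the dual lattice, and a suitable such $\ell$ (the last dual vector of a reduced basis) has length $\asymp\mu_{m+1}^{-1}$. The bound $\mu_1/\mu_{m+1}$ you then settle on is the correct one, and it is exactly the upper half of the paper's $\inj(a\Gamma)\asymp\min_{i\neq j}a_i/a_j$.
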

\begin{proof}
The estimate~\eqref{eq:a5} is well known; we include a proof for completeness.

First, by definition,
\[
    f_{\varepsilon,\boldsymbol{\hat{\eta}}}(x)
    \geq
    \lambda_1(x)^{\eta_1}.
\]
On the other hand, by Minkowski's second theorem, for every
$1\leq i\leq m$,
\[
    \lambda_i(x)\ll \lambda_1(x)^i.
\]
Hence
\[
    f_{\varepsilon,\boldsymbol{\hat{\eta}}}(x)
    \ll
    1+\max_{1\leq i\leq m}\lambda_i(x)^{\eta_i}
    \ll
    \lambda_1(x)^C
\]
for some $C>0$ depending only on $\boldsymbol{\hat{\eta}}$. Thus, it remains
to compare $\lambda_1(x)$ with $\inj(x)$.

To this end, recall that every $x\in\X$ can be written as $x=g\Gamma$, where
\[
    g=kan,
\]
with
\[
    k\in K,\qquad
    a\in A_{2/\sqrt{3}},\qquad
    n\in N_{1/2},
\]
where
\[
    K=\mathrm{SO}(m+1),
\]
\[
    A_{2/\sqrt{3}}
    =
    \left\{
        \operatorname{diag}(a_1,\ldots,a_{m+1}):
        a_i>0,\ a_1\cdots a_{m+1}=1,\
        \frac{a_i}{a_{i+1}}\leq\frac{2}{\sqrt3}
    \right\},
\]
and
\[
    N_{1/2}
    =
    \left\{
        (n_{ij})\in\SL_{m+1}(\R):
        n_{ij}=0\ (i>j),\
        n_{ii}=1,\
        n_{ij}\in[-1/2,1/2]\ (i<j)
    \right\}.
\]
Writing
\[
    g\Gamma=k(ana^{-1})a\Gamma,
\]
we observe that both $k$ and $ana^{-1}$ range over fixed compact subsets of
$\SL_{m+1}(\R)$. Hence, by the compactness of these sets, $\inj(x)$ and
$\lambda_1(x)$ are comparable with $\inj(a\Gamma)$ and $\lambda_1(a\Gamma)$,
respectively.

For
\[
    a=\operatorname{diag}(a_1,\ldots,a_{m+1}),
\]
we have
\[
    \inj(a\Gamma)
    \asymp
    \min_{i\neq j}\frac{a_i}{a_j}
    =
    \frac{\min_i a_i}{\max_j a_j},
\]
while
\[
    \lambda_1(a\Gamma)^{-1}=\min_i a_i.
\]
Indeed, the shortest nonzero vectors in $a\Z^{m+1}$ are the coordinate
vectors. Since
\[
    a_1\cdots a_{m+1}=1,
\]
we have
\[
    1\leq\max_j a_j\leq(\min_i a_i)^{-m}.
\]
Consequently,
\[
    \lambda_1(a\Gamma)^{-(m+1)}
    \ll
    \inj(a\Gamma)
    \ll
    \lambda_1(a\Gamma)^{-1}.
\]
Combining these estimates with the preceding bounds on
$f_{\varepsilon,\boldsymbol{\hat{\eta}}}$ proves the lemma for suitable
constants $\kappa_1,\kappa_2>0$.
\end{proof}

\subsection{Norms}
\label{subsec:norms}
For a function $f:\R^m\to\R$, $t>0$ and a weight $\bfw$, we define
\[
    \|f\|_{\bfw,t}
    =
    \sup_{\substack{\phi\in\R^m\\ \theta\in\supp(\nu)}}
    \left|
        f\bigl((A^{(g_t(\bfw))})^{-1}\theta+\phi\bigr)-f(\phi)
    \right|,
\]
where $A^{(\cdot)}$ is defined as in~\eqref{eq: def A}.
Here
$\bigl(A^{(g_t(\bfw))}\bigr)^{-1}
=\operatorname{diag}\bigl(c_1^{n_1(g_t(\bfw))},\ldots,c_m^{n_m(g_t(\bfw))}\bigr)$
is a contraction whose entries tend to $0$ as $t\to\infty$, so that
$\|f\|_{\bfw,t}$ measures the oscillation of $f$ at a scale shrinking with $t$;
in particular it is small for large $t$ whenever $f$ is uniformly continuous.

For $k\geq0$, let $C_b^k(\R^m)$ denote the space of bounded
$k$-times continuously differentiable functions on $\R^m$. We equip this
space with the norm
\[
    \|f\|_{C^k}
    =
    \max_{\substack{0\leq j\leq k\\ 1\leq i_1,\ldots,i_j\leq m}}
    \left\|
        \frac{\partial^j f}
        {\partial x_{i_1}\cdots\partial x_{i_j}}
    \right\|_{C^0},
    \qquad
    \|f\|_{C^0}
    =
    \sup_{x\in\R^m}|f(x)|.
\]

We next define the norms on $\X$. For $k \in \N \cup\{\infty\}$, let $C_c^k(\X)$ denote the space of compactly supported $k$-times continuously differentiable functions on $\X$. For $Y\in\Lie(G)$, let $D_Y$ denote the
first-order differential operator on $C_c^\infty(\X)$ given by
\[
    D_Yf(y)
    :=
    \left.\frac{d}{dt}f(\exp(tY)y)\right|_{t=0}.
\]
Fix an ordered basis $\{Y_1,\ldots,Y_r\}$ of $\Lie(G)$. For a monomial
\[
    Z=Y_1^{\ell_1}\cdots Y_r^{\ell_r},
\]
define
\[
    D_Z
    :=
    D_{Y_1}^{\ell_1}\cdots D_{Y_r}^{\ell_r},
    \qquad
    \deg(Z):=\ell_1+\cdots+\ell_r.
\]
We denote the uniform norm on $C_c(\X)$ by
\[
    \|f\|_{C^0}
    :=
    \sup_{x\in\X}|f(x)|,
\]
and, for $k\geq1$, define the $C^k$ norm on $C_c^k(\X)$ by
\[
    \|f\|_{C^k}
    :=
    \sum_{\deg(Z)\leq k}\|D_Zf\|_{C^0}.
\]
In the hypotheses of Theorem~\ref{thm:Double equi} and of
Propositions~\ref{prop: inductive} and~\ref{prop:change of weights} the
integer $l_0\geq1$ is quantified together with $\delta_{\bfw}$ and
$c_{\bfw}$, exactly as in the hypotheses of
Propositions~\ref{prop:convergence case} and~\ref{prop:div case}. For the
measures $\nu$ considered in Theorem~\ref{main thm:counting} one may take
$l_0=\lceil\tfrac12\dim SO(m+1)\rceil$, by \cite[Thm.~1.2]{benard2026}.

Finally, for $k\geq1$ and $f\in C_c^k(\X)$, define the $L^2$-Sobolev
norm by
\begin{align*}
    \mathcal S_{2,k}(f)
    &:=
    \left(
        \sum_{\deg(Z)\leq k}
        \int_{\X}|D_Zf(y)|^2\,d\mu_{\X}(y)
    \right)^{1/2}.
\end{align*}

We now recall the exponential mixing estimate that will be needed in the proof.

\begin{thm}
\label{thm:mixing}
There exist $\delta_2>0$ and $l'\in\mathbb N$ such that, for every $g\in G$ and all
$f_1,f_2\in C_c^\infty(\X)$,
\begin{align}
\label{eq:mixing}
    \int_\X
    f_1(x)
    f_2(gx)
    \,d\mu_\X(x)
    =
    \mu_\X(f_1)\mu_\X(f_2)+
    O\left(
        \|g\|^{-\delta_2}
        \mathcal{S}_{2,l'}(f_1)
        \mathcal{S}_{2,l'}(f_2)
    \right),
\end{align}
where $\|\cdot\|$ is as in Section~\ref{subsec:Height function}.
\end{thm}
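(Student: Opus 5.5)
The plan is to deduce \eqref{eq:mixing} from a uniform decay bound for matrix coefficients of the unitary representation of $G$ on $L^2_0(\X)$, the orthogonal complement of the constants. First I subtract means: writing $f_j^0=f_j-\mu_\X(f_j)$, the left side of \eqref{eq:mixing} equals $\mu_\X(f_1)\mu_\X(f_2)+\langle f_1^0,\pi(g^{-1})f_2^0\rangle$, where $\pi$ is the regular representation $\pi(h)f(x)=f(h^{-1}x)$. Moreover $\mathcal S_{2,k}(f_j^0)\le\mathcal S_{2,k}(f_j)$, since the operators $D_Z$ with $\deg(Z)\geq1$ kill constants. It therefore suffices to prove
\[
|\langle \pi(g)u,v\rangle|\ll\|g\|^{-\delta_2}\mathcal S_{2,l'}(u)\mathcal S_{2,l'}(v)
\]
for smooth $u,v\in L^2_0(\X)$. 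The norm $\|g\|$ on $V$ is comparable to a power of the Cartan-projection size $\|a\|$ in $g=k_1ak_2$, uniformly in $g$ and $g^{-1}$, so it is enough to bound the decay in terms of $\|a\|$.

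Next I use that $m\geq2$, so $G=\SL_{m+1}(\R)$ with $m+1\geq3$ has Kazhdan's property (T). Hence $L^2_0(\X)$ has no almost invariant vectors and, more strongly, every unitary representation without invariant vectors has uniformly decaying $K$-finite matrix coefficients. I would invoke Oh's quantitative bound (or, equivalently, Cowling--Haagerup--Howe applied to the strongly $L^p$ structure guaranteed by property (T)). This bound states that for $K$-finite $u,v$,
\[
|\langle\pi(k_1ak_2)u,v\rangle|\le C\,\xi(a)\,(\dim\langle Ku\rangle\dim\langle Kv\rangle)^{1/2}\|u\|_2\|v\|_2,
\]
where $\xi$ is a product of Harish-Chandra functions over a strongly orthogonal system and decays like $\|a\|^{-\delta}$ for some $\delta>0$ depending only on $m$. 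No arithmetic input such as Selberg or Kim--Sarnak is needed.

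The final step, and the one requiring the most care, is passing from $K$-finite to smooth vectors. I would decompose $u=\sum_\tau u_\tau$ into $K$-isotypic components indexed by $\tau\in\widehat K$. Then $\|u_\tau\|_2\ll(1+|\tau|)^{-2N}\|\Omega_K^N u\|_2$, where $\Omega_K$ is the Casimir of $K$, $|\tau|$ is the highest weight norm, and $\dim\tau\ll(1+|\tau|)^{d}$. Summing the $K$-finite bound over pairs $(\tau,\tau')$ converges once $N$ is large in terms of $\dim K$. The Casimir $\Omega_K$ is a combination of $D_Z$ with $\deg Z=2$, so $\|\Omega_K^N u\|_2\ll\mathcal S_{2,2N}(u)$. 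This gives the bound with $l'=2N$ and $\delta_2$ proportional to $\delta$.

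The main obstacle I expect is bookkeeping rather than a conceptual gap. It consists of checking that Oh's bound applies uniformly to all of $L^2_0(\X)$, which requires only the absence of $G$-invariant vectors (true by ergodicity, Moore's theorem). It also requires verifying the comparison between $\|g\|$ on $V=\bigoplus\bigwedge^l\R^{m+1}$ and $\xi(a)^{-1}$, which holds because both are controlled by $\max_i a_i$ up to fixed powers.
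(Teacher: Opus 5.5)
Your proposal is correct and follows the same route as the paper, which simply cites decay of $K$-finite matrix coefficients on $L^2_0(\X)$ (Howe--Tan for $\SL_{m+1}(\R)$), the Katok--Spatzier upgrade to smooth vectors via $K$-types and Sobolev norms, and the comparison of norms on $G$; your use of property (T) through Oh's uniform bound, in place of the paper's spectral-gap formulation, is an inessential variant. The one detail to fix is that you should use a shifted Casimir such as $(I-\Omega_K)^N$ (with $\Omega_K\le 0$) rather than $\Omega_K^N$, since $\Omega_K^N$ annihilates the $K$-invariant component $u_0$ and so cannot control it.
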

\begin{proof}
This comes from the standard decay of matrix coefficients for
$K$-finite vectors in unitary representations without almost invariant vectors,
which goes back to works of Harish-Chandra, Borel--Wallach, Cowling, Howe and
others; we
refer to~\cite{HoweTan} for a direct self-contained proof in the case
$G=\SL_{m+1}(\R)$. It is applicable to the representation of $G$ on
$L^2_0(\X)$, since $\Gamma$ is a lattice in the simple group $G$.
Katok--Spatzier~\cite{KatokSpatzier94} extended these estimates to smooth
vectors; see also~\cite[Cor.~2.4.4]{KM96} for an explicit estimate.
Any two norms on $G$ of the kind considered here differ by
bounded powers of one another, which only affects the value of $\delta_2$.

\end{proof}

The $L^2$-Sobolev norm is essential here: the test functions to
which~\eqref{eq:mixing} is applied in Section~\ref{sec:div case} are smoothed
Siegel transforms, whose sup-norms are not controlled, while their $L^2$-norms
are, by Proposition~\ref{prop:roger} below.

\subsection{Siegel transforms and moment formulae}

For a bounded measurable function $f:\R^{m+1}\to\R$ with compact support,
we define its \emph{Siegel transform} $\widehat{f}:\X\to\R$ by
\[
    \widehat{f}(g\Gamma)
    =
    \sum_{x\in g\Z^{m+1}\setminus\{0\}}f(x).
\]

We will use the following Siegel mean value theorem~\cite{sie} and
Rogers' second moment formula~\cite{rog}.

\begin{prop}\label{prop:roger}
Let $f\in L^1(\R^{m+1})\cap L^2(\R^{m+1})$ be non-negative. Then
\[
    \int_{\X}\widehat{f}(\Lambda)\,d\mu_{\X}(\Lambda)
    =
    \int_{\R^{m+1}}f(x)\,dx.
\]
Moreover, since $m\geq2$,
\begin{align*}
    \int_{\X}\widehat{f}(\Lambda)^2\,d\mu_{\X}(\Lambda)
    &=
    \left(
        \int_{\R^{m+1}}f(x)\,dx
    \right)^2
    +
    \sum_{\substack{k\in\N,\ q\in\Z\setminus\{0\}\\
                    \gcd(k,q)=1}}
    \int_{\R^{m+1}}f(kx)f(qx)\,dx \\
    &=
    \left(
        \int_{\R^{m+1}}f(x)\,dx
    \right)^2
    +
    O_m\left(
        \|f\|_{\infty}
        \int_{\R^{m+1}}f(x)\,dx
    \right).
\end{align*}
\end{prop}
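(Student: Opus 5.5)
The first identity is Siegel's mean value theorem~\cite{sie}, and the first equality in the second moment is Rogers' formula~\cite{rog}, valid in dimension $m+1\geq3$. I will quote both rather than reprove them. Only the final $O_m$-bound needs an argument, and it reduces to summing the dependent-pair terms.

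For orientation, here is the standard route to the two cited identities. For Siegel's formula, the functional $f\mapsto\int_\X\widehat f\,d\mu_\X$ is a positive, $G$-invariant Radon functional on $C_c(\R^{m+1}\setminus\{0\})$. Since $G$ acts transitively on $\R^{m+1}\setminus\{0\}$ and the stabiliser is unimodular, this functional is a constant multiple of Lebesgue measure. The constant is fixed to be $1$ by the classical computation that the primitive vectors alone contribute $\zeta(m+1)^{-1}$, and summing over multiples restores the factor $\zeta(m+1)$. For Rogers' formula, one expands
\[
\widehat f(\Lambda)^2=\sum_{v,w\in\Lambda\setminus\{0\}}f(v)f(w)
\]
and splits the sum into linearly independent and linearly dependent pairs. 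When $m+1\geq3$, $G$ acts transitively on linearly independent pairs of vectors. The same invariance-and-unfolding argument, applied to pairs spanning a sublattice, then shows that the independent pairs contribute $\bigl(\int f\bigr)^2$. A dependent pair has the form $(v,w)=(kx,qx)$ with $x$ primitive, $k\in\N$, $q\in\Z\setminus\{0\}$ and $\gcd(k,q)=1$. Applying the primitive-vector version of Siegel's formula to $x\mapsto f(kx)f(qx)$ and collecting the normalising constants gives the stated sum.

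It remains to bound
\[
S:=\sum_{\substack{k\in\N,\ q\in\Z\setminus\{0\}\\ \gcd(k,q)=1}}\int_{\R^{m+1}}f(kx)f(qx)\,dx .
\]
Since $f\geq0$, I bound one of the two factors by $\|f\|_\infty$ and keep the factor with the larger dilation. I split according to whether $|q|\leq k$ or $|q|>k$, and drop the coprimality condition, which only enlarges the sum.

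\emph{Case $|q|\leq k$.} Here
\[
\int f(kx)f(qx)\,dx\leq\|f\|_\infty\int f(kx)\,dx=\|f\|_\infty\,k^{-(m+1)}\int f .
\]
For fixed $k$ there are at most $2k$ admissible values of $q$, so this part of $S$ is at most $2\|f\|_\infty\int f\cdot\sum_k k^{-m}$.

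\emph{Case $|q|>k$.} Symmetrically,
\[
\int f(kx)f(qx)\,dx\leq\|f\|_\infty\,|q|^{-(m+1)}\int f .
\]
For fixed $q$ there are fewer than $|q|$ admissible values of $k$, so this part is at most $\|f\|_\infty\int f\cdot\sum_{q\neq0}|q|^{-m}$.

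Both series converge precisely because $m\geq2$. Hence $S\ll_m\|f\|_\infty\int f$, which is the asserted error term. If $f$ is unbounded the bound is vacuous, so we may assume $\|f\|_\infty<\infty$.

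The main obstacle is not this estimate but the justification of Rogers' identity itself in dimension $\geq3$, namely the convergence of the unfolding over rank-$2$ sublattices. That step is exactly why the hypothesis $m\geq2$ is needed, and it is the reason I cite~\cite{rog} rather than reprove it.
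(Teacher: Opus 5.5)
Your proposal is correct and agrees with the paper, which simply cites Siegel's mean value theorem and Rogers' formula for both identities and does not spell out the final $O_m$ bound; your split into $|q|\le k$ and $|q|>k$ is a valid way to supply it and gives $S\le 4\zeta(m)\|f\|_\infty\int f$. One harmless slip in your orientation sketch: dependent pairs are $(ky,qy)$ with $\gcd(k,q)=1$ and $y\in\Lambda\setminus\{0\}$ arbitrary, not necessarily primitive (otherwise a pair such as $(2x,2x)$ is missed), and it is the full Siegel formula over all such $y$ that yields the stated sum with no $\zeta(m+1)$ factor.
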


\subsection{Diagonal elements associated with $\Psi$}
\label{subsec:dani}

The following elementary consequence of $\alpha$-balance, used in
Sections~\ref{sec:con case} and~\ref{sec:div case}, identifies the diagonal
matrices produced by the Dani correspondence as elements of the flow
$g_s(\bfw)$ with $\bfw$ in the cone $[\alpha/2,1]^m$.

\begin{lem}
\label{lem:dani}
Let $\alpha\in(0,1)$, let $Q>1$ and let $z_1,\ldots,z_m\in(0,1)$ satisfy
\[
    z_i\leq(z_1\cdots z_m)^{\alpha}
    \quad(1\leq i\leq m),
    \qquad
    (z_1\cdots z_m)^{2-(2m+1)\alpha}\,Q^{\,2-m\alpha}\geq1 .
\]
Put $z=z_1\cdots z_m$, $r=(z Q)^{1/(m+1)}$ and $s=\log(Q/r)$. Then
\[
    \operatorname{diag}
    \bigl(z_1^{-1}r,\ldots,z_m^{-1}r,\,Q^{-1}r\bigr)
    =g_s(\bfw)
    \qquad\text{for some } \bfw\in[\alpha/2,1]^m .
\]
\end{lem}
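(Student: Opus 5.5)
The plan is a direct computation: I will write the given diagonal matrix as $g_s(\bfw)$ by defining $\bfw$ explicitly, and then show that the hypothesis on $z$ and $Q$ is equivalent to $w_i\geq\alpha/2$. Throughout I set $L=\log Q>0$ and $\ell=-\log z>0$; these are positive because $Q>1$ and each $z_i\in(0,1)$. From $r^{m+1}=zQ$ we get $\log r=(L-\ell)/(m+1)$.

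First I check that the matrix lies in $G$ and that its last entry is correct. The determinant is $r^{m+1}/(z_1\cdots z_m Q)=r^{m+1}/(zQ)=1$. The last entry is $Q^{-1}r=e^{-s}$ by the definition of $s$. Moreover
\[
s=L-\log r=\frac{mL+\ell}{m+1}>0 .
\]
I then define
\[
w_i=\frac{\log(r/z_i)}{s},
\]
so that $e^{sw_i}=z_i^{-1}r$ and the matrix equals $\operatorname{diag}(e^{sw_1},\ldots,e^{sw_m},e^{-s})$. The weights sum to one, since
\[
\sum_i\log(r/z_i)=m\log r+\ell=(m+1)\log r-\log r-\log z=\log Q-\log r=s .
\]
Once every $w_i\geq\alpha/2>0$ is shown, the condition $w_i\leq1$ follows automatically from positivity together with $\sum_i w_i=1$.

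It remains to prove the key inequality $w_i\geq\alpha/2$. By the balance hypothesis $z_i\leq z^{\alpha}$ we have $-\log z_i\geq\alpha\ell$, and hence
\[
\log(r/z_i)\geq\frac{L-\ell}{m+1}+\alpha\ell .
\]
So it suffices to show
\[
\frac{L-\ell}{m+1}+\alpha\ell\geq\frac{\alpha}{2}\cdot\frac{mL+\ell}{m+1}.
\]
Multiplying through by $2(m+1)$ and rearranging, this becomes
\[
(2-m\alpha)L\geq\bigl(2-(2m+1)\alpha\bigr)\ell .
\]
This is exactly the logarithm of the hypothesis $z^{2-(2m+1)\alpha}Q^{2-m\alpha}\geq1$.

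I do not expect any real obstacle here. The only care needed is in the algebra: expressing both $\log r$ and $s$ in terms of $L$ and $\ell$ via $r^{m+1}=zQ$, so that the final inequality visibly matches the stated hypothesis.
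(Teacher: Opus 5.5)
Your proposal is correct and follows the paper's proof essentially step for step. You use the same definition $w_i=\log(z_i^{-1}r)/s$, the same determinant and sum-to-one checks, and the same combination of the balance hypothesis with $z^{2-(2m+1)\alpha}Q^{2-m\alpha}\geq1$ to get $w_i\geq\alpha/2$. The only difference is bookkeeping: you work additively in $L=\log Q$ and $\ell=-\log z$, while the paper raises the second hypothesis to the power $1/(2(m+1))$ and obtains $z_i\leq r\,(Q/r)^{-\alpha/2}$ multiplicatively.
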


\begin{proof}
The matrix has determinant $z^{-1}r^{m+1}Q^{-1}=1$, and its last diagonal
entry is $Q^{-1}r=e^{-s}$, so it equals $g_s(\bfw)$ with
$w_i=\log(z_i^{-1}r)/\log(Q/r)$. Since
$\sum_i\log(z_i^{-1}r)=\log(r^m/z)=\log(Q/r)$ by the definition of $r$,
we have $\sum_iw_i=1$, so $\bfw$ is a weight. Moreover, raising the second
hypothesis to the power $1/(2(m+1))$ and using the first, \[
    z_i
    \leq
    z^{\alpha}
    \bigl(z^{2-(2m+1)\alpha}Q^{2-m\alpha}\bigr)^{\frac{1}{2(m+1)}}
    =
    (z Q)^{\frac{2+\alpha}{2(m+1)}}Q^{-\frac\alpha2}
    =
    r\,(Q/r)^{-\frac\alpha2} ,
\]
so that $z_i^{-1}r\geq(Q/r)^{\alpha/2}$ and hence $w_i\geq\alpha/2$. As the
$w_i$ are positive and sum to $1$, also $w_i\leq1$.
\end{proof}

\medskip
\section{Non-divergence estimates}
\label{sec:non div}

The aim of this section is a quantitative non-divergence estimate
for the translates $g_t(\bfw)u(\phi)x$, uniform over $\bfw$ in the cone
$[\alpha,1]^m$ and over the base point $x$. Its qualitative ancestor is
\cite[Prop.~3.6]{KhalilWeiss}, which asserts, for a compactly supported
$(C,\alpha)$-decaying and Federer (doubling) measure and uniformly over base points,
that there is no escape of mass along any sequence of diagonal elements
\emph{drifting away from the walls} of the Weyl chamber, in the terminology
of~\cite{KW08}; that statement rests in turn on
\cite[Prop.~3.5]{KhalilWeiss} and on the non-divergence estimates
of~\cite{KLW}, which in turn go back to \cite{KM98}. Writing $g_t(\bfw)=\exp(X)$ one has
$\min_{i\leq m}X_i=t\min_iw_i\geq\alpha t$ for $\bfw\in[\alpha,1]^m$, so that
the standing hypothesis $\bfw\in[\alpha,1]^m$ of this paper is precisely a
quantitative form of drifting away from the walls. What follows may thus be
read as an effective version of \cite[Prop.~3.6]{KhalilWeiss} for the class of
measures at hand.

For a weight $\bfw=(w_1,\ldots,w_m)$, $t>0$, $\varsigma>0$, and
$x\in\X$, define
\begin{align}
    J(\bfw,t,\varsigma,x)
    =
    \left\{
        \phi\in\R^m:
        \inj\!\left(
            \widetilde{g_t(\bfw)}u(\phi)x
        \right)>\varsigma
    \right\},
\end{align}
where $\widetilde{g_t(\bfw)}$ is defined as in~\eqref{eq:def tilde d}.

The main goal of this section is to prove the following non-divergence
estimate.

\begin{prop}\label{prop:height function}
For every $0<\alpha<1$, there exist positive constants
$\kappa_1,\kappa_2,\kappa_3$, depending only on $\alpha$ and $\nu$, such that for
every $\bfw\in[\alpha,1]^m$, $t>0$, $\varsigma>0$, and $x\in\X$,
\begin{align}
    \nu^{(g_t(\bfw))}
    \!\left(
        J(\bfw,t,\varsigma,x)^c
    \right)
    \ll
    \varsigma^{\kappa_1}
    \left(
        1+
        \inj(x)^{-\kappa_2}e^{-\kappa_3t}
    \right).
\end{align}
\end{prop}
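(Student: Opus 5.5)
We plan to prove Proposition~\ref{prop:height function} with a Margulis-type height function argument. By Lemma~\ref{lem:inj height} it suffices to bound $\int f(\widetilde{g_t(\bfw)}u(\phi)x)\,d\nu^{(g_t(\bfw))}(\phi)$ for $f=f_{\varepsilon,\boldsymbol{\hat\eta}}$ with suitable $\varepsilon,\boldsymbol{\hat\eta}$. Indeed, $\inj(y)\le\varsigma$ forces $f(y)\gg\varsigma^{-\kappa_1'}$, so Markov's inequality converts a bound of the form $\ll 1+\inj(x)^{-\kappa_2}e^{-\kappa_3 t}$ into the stated one. The core statement is therefore a contraction inequality. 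We aim to show that there are a fixed time $t_0$, a constant $a<1$ and $b>0$, depending only on $\alpha$ and $\nu$, such that for every weight $\bfw\in[\alpha,1]^m$ and every $y\in\X$,
\[
\int f\bigl(g_{t_0}(\bfw)u(\theta)y\bigr)\,d\nu(\theta)\le a f(y)+b .
\]
By compactness of $\Omega$ and of $[\alpha,1]^m$, it is harmless to replace $g_{t_0}(\bfw)$ by its rounded version $\widetilde{g_{t_0}(\bfw)}$; this changes $f$ only by bounded multiplicative factors.

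To prove the one-step inequality we treat each $\lambda_i$ separately, following the Eskin--Margulis--Mozes/Benoist--Quint scheme. For a primitive rank-$i$ subgroup with wedge $v\in V_i$, we need
\[
\int\|g_{t_0}(\bfw)u(\theta)v\|^{-\eta_i}\,d\nu(\theta)\le a\|v\|^{-\eta_i}
\]
for small $\eta_i$, uniformly over $\bfw\in[\alpha,1]^m$. The key input is that, since every $w_j\ge\alpha$, the top weight component of $\bigwedge^i g_{t_0}(\bfw)$ on $u(\theta)v$ expands by at least $e^{\alpha t_0}$, except when $\theta$ lies in a neighbourhood of a proper affine subspace. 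We control that exceptional set with the product Frostman bound of Lemma~\ref{lem:frostman}: the measure $\nu=\bigotimes\nu_i$ gives small mass to $\varepsilon$-neighbourhoods of proper affine subspaces (a $(C,\delta)$-decay property), uniformly, because each factor does. Choosing $\eta_i$ small relative to $\delta$ and $t_0$ large gives the contraction. The different $\lambda_i$ are then combined via the standard convexity inequality $\|L\cap M\|\,\|L+M\|\le\|L\|\,\|M\|$ with a suitable choice of exponents $\eta_i$. This yields the inequality for $f$ at one step, up to $\lambda_{i-1}\lambda_{i+1}$ cross terms, which are absorbed in the usual way.

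Next we iterate. Write $t=kt_0+s$ with $0\le s<t_0$. Using the decomposition~\eqref{eq: measure decompose} repeatedly, the measure $\nu^{(g_t(\bfw))}$ pushed through $\widetilde{g_t(\bfw)}u(\cdot)x$ is an iterated convolution of $k$ steps of the above type. Each step is a rounded diagonal element of weight $\bfw$ applied to $u(\theta)$ with $\theta\sim\nu$, up to elements of $\Omega$, and the contraction ratios $c_i$ match the rounding in $n_i(\cdot)$. Iterating the one-step inequality gives
\[
\int f\,d(\cdots)\le a^k f(x)+\frac{b}{1-a}\ll e^{-\kappa_3 t}\inj(x)^{-\kappa_2}+1 .
\]
The mismatch between $\widetilde{g_t}$ and the product of the $k$ rounded factors lies in a bounded set, again by the computation following~\eqref{eq:def tilde d}.

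I expect the main obstacle to be the one-step inequality, specifically its uniformity in $\bfw$. We need the linear-algebra fact that for every rank-$i$ wedge $v$, the set of $\theta$ for which $g_{t_0}(\bfw)u(\theta)$ fails to expand $v$ by $e^{\alpha t_0/2}$ is contained in a small neighbourhood of a proper affine subspace, with constants depending only on $\alpha$. To establish it, we will first analyse the $\bfe_{m+1}$-containing versus non-containing components of $u(\theta)v$, using the polynomial dependence on $\theta$. We will then appeal to the Kleinbock--Lindenstrauss--Weiss $(C,\alpha)$-good property of polynomials with respect to product self-similar measures.
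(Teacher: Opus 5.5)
Your plan is essentially the paper's proof: Markov's inequality plus Lemma~\ref{lem:inj height}, a one-step contraction uniform in $\bfw\in[\alpha,1]^m$ coming from the expansion $e^{l\alpha t}$ on the top component together with Frostman-based negative moments and the Eskin--Margulis--Mozes combination (Lemma~\ref{lem:contraction}), and then an iteration through the self-similar decomposition (Corollary~\ref{cor:iteration}).

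The one point to tighten is the iteration. The layers of $\nu^{(g_t(\bfw))}$ are finitely supported measures, not copies of $\nu$. So at each step you must first reinsert a rescaled copy of $\nu$ using the decomposition and the log-Lipschitz property, and then absorb the rounding; each costs a factor $\gamma_0$. You therefore need $\gamma_0^2a<1$ rather than merely $a<1$. This holds once $t_0$ is large, because $a\ll e^{-\kappa_0t_0}$ while $\gamma_0$ is independent of $t_0$ and $\varepsilon$. The paper makes exactly this choice: it decomposes $\nu$ at scale $g_{(n-1)t_0}(\bfw)$, applies the one-step lemma to a genuine copy of $\nu$, and obtains $I_n\le\gamma_0^2Ce^{-\kappa_0t_0}I_{n-1}+\gamma_0b$.
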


The following results will be used in the proof of
Proposition~\ref{prop:height function}.

\begin{lem}
\label{lem:contraction}
Let $0<\alpha<1$.
There exist $\boldsymbol{\hat{\eta}}=(\eta_1,\ldots,\eta_m)\in\R_{>0}^m$,
depending only on $\nu$, and constants $\kappa_0>0$ and
$C>0$, depending only on $\alpha$ and $\nu$, such
that the following holds. For every sufficiently large $t$, there exist
$\varepsilon=\varepsilon(t,\alpha,\nu)>0$ and
$b=b(t,\alpha,\nu)>0$ such that the function
$f_{\varepsilon,\boldsymbol{\hat{\eta}}}$ defined in
\eqref{eq:def f e eta} satisfies
\begin{align}
\label{eq:height fun contraction}
    \int_{\R^m}
    f_{\varepsilon,\boldsymbol{\hat{\eta}}}
    \bigl(g_t(\bfw)u(\theta)x\bigr)
    \,d\nu(\theta)
    \leq
    Ce^{-\kappa_0 t}
    f_{\varepsilon,\boldsymbol{\hat{\eta}}}(x)
    +
    b
\end{align}
for every $x\in\X$ and every weight
$\bfw=(w_1,\ldots,w_m)\in[\alpha,1]^m$.
\end{lem}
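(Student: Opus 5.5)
The plan is the standard Eskin--Margulis--Mozes / Kleinbock--Margulis contraction argument for height functions, run one exterior power at a time. First reduce to the individual functions $\lambda_i^{\eta_i}$. By the KLW-type convexity inequality, for a lattice $\Lambda$ and primitive subgroups $L\subset M$ one has $\|L\|\,\|M\| \geq \|L\cap M\|\,\|L+M\|$. This yields, in the usual way, the pointwise mixed bound
\[
\lambda_i(gx)^{\eta_i}\ \ll\ \max_{0<j\le \min(i,m+1-i)}\bigl(\lambda_{i-j}(gx)\lambda_{i+j}(gx)\bigr)^{\eta_i/2}
\]
whenever there are two distinct "short" subgroups of rank $i$. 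Otherwise the shortest primitive subgroup of rank $i$ is unique, and $\lambda_i(gx)$ is governed by $\|g v_\Lambda\|^{-1}$ for a single vector $v_\Lambda\in V_i$.

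The key linear-algebra input is a uniform averaging estimate: for each $1\le i\le m$ there are $\beta_i>0$ and $\kappa>0$ such that, for every weight $\bfw\in[\alpha,1]^m$, every large $t$, and every $v\in V_i$,
\[
\int_{\R^m}\|g_t(\bfw)u(\theta)v\|^{-\beta_i}\,d\nu(\theta)\ \le\ C e^{-\kappa t}\|v\|^{-\beta_i}.
\]
To prove it, write $v$ in the basis $\bfe_I$. The components with $m+1\notin I$ are expanded by $e^{t\sum_{k\in I}w_k}\ge e^{\alpha t}$. The components with $m+1\in I$ are expanded by $e^{t(\sum_{k\in I\setminus\{m+1\}}w_k-1)}$, which is the only possible contraction. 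The unipotent $u(\theta)$ moves the contracted coordinates into expanded ones through expressions that are polynomial, in fact affine in each $\theta_k$, in $\theta$.

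So $\|g_t(\bfw)u(\theta)v\|\ge e^{\alpha t}\,|P_v(\theta)|$, where $P_v$ is a coordinatewise-affine polynomial whose coefficients are comparable to $\|v\|$. The Frostman bound of Lemma~\ref{lem:frostman}, applied coordinatewise via Fubini on the product measure $\nu$, gives $\nu(\{|P_v|<\varepsilon\|v\|\})\ll\varepsilon^{\delta'}$. Choosing $\beta_i<\delta'$ makes the negative moment integrable and produces the factor $e^{-\alpha\beta_i t}$. Crucially, the constants depend on $\bfw$ only through the lower bound $\alpha$, because the expansion is bounded below by $e^{\alpha t}$ uniformly on the cone.

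Next, the exponents are chosen as $\eta_i$ depending only on $\nu$ (via $\delta'$), small enough that $\eta_i\le\beta_i$ and that the mixed terms from the convexity inequality interpolate properly. A standard choice is $\eta_i=\eta\, i(m+1-i)$ for small $\eta$, which is concave in $i$. This makes $(\eta_{i-j}+\eta_{i+j})/2<\eta_i$ with a definite gap.

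Combining, for $t$ large one gets a system
\[
\int\lambda_i(g_t u(\theta)x)^{\eta_i}\,d\nu\le Ce^{-\kappa t}\lambda_i(x)^{\eta_i}+\omega(t)\max_j\sqrt{\lambda_{i-j}(x)^{\eta_{i-j}}\lambda_{i+j}(x)^{\eta_{i+j}}}+B(t).
\]
Here the "$\omega(t)$" term comes from the bounded distortion $\|g_t(\bfw)u(\theta)\|\le e^{O(t)}$ with $\theta\in[-M,M]^m$, since $\lambda_i(gx)\le\|g\|^{O(1)}\lambda_i(x)$.

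For the final assembly, the mixed terms are absorbed using $\sqrt{ab}\le \epsilon a+\epsilon^{-1}b$ with weights $\varepsilon$-dependent, exactly as in EMM/KM. The constant $\varepsilon^{-1}$ in $f_{\varepsilon,\hat\eta}$ absorbs lower-order terms, and this determines $\varepsilon(t)$ and $b(t)$. Summing over $i$ then gives~\eqref{eq:height fun contraction} with $\kappa_0$ a fixed fraction of $\alpha\min_i\beta_i$.

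The main obstacle is the uniform negative-moment estimate. In particular, it requires showing that the coefficients of $P_v$ capture a fixed proportion of $\|v\|$ for every $v\in V_i$ and every $\bfw$ in the cone. This is where one must use that $u(\theta)$ is the full expanding horospherical subgroup of $g_t(\bfw)$ for every weight with positive entries. It is also the step where the product structure of $\nu$, not just $m$ separate Frostman bounds, is used, through a Fubini/Remez-type argument on coordinatewise-affine polynomials.
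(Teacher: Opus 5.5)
Your overall architecture matches the paper's: negative moments on $\bigwedge^l$, the convexity inequality for two distinct short subgroups, then assembly. Your negative-moment estimate is fine. The gap is in absorbing the degenerate-case terms, and it comes from your choice of exponents.

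The height function in the statement is $f_{\varepsilon,\boldsymbol{\hat\eta}}=\varepsilon^{-1}+\sum_i\lambda_i^{\eta_i}$, with no $\varepsilon^{q_i}$-weights in front of the individual terms. In the degenerate case the convexity inequality only gives
\[
\lambda_i(gx)^{\eta_i}\le \omega(t)\bigl(\lambda_{i-j}(x)\lambda_{i+j}(x)\bigr)^{\eta_i/2},\qquad \omega(t)=e^{O(t)},
\]
and this must end up $\le Ce^{-\kappa_0 t}f(x)+b$. The only available mechanism is to bound the right-hand side by $\omega(t)f^{p}$ with
\[
p=\frac{\eta_i}{2}\Bigl(\frac1{\eta_{i-j}}+\frac1{\eta_{i+j}}\Bigr),
\]
and then use $f\ge\varepsilon^{-1}$ to get $f^{p}\le\varepsilon^{1-p}f$, choosing $\varepsilon$ so that $\omega(t)\varepsilon^{1-p}\ll e^{-\kappa_0t}$. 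This is the real role of $\varepsilon^{-1}$ in $f$, not merely absorbing lower-order terms, and it requires $p<1$. That is, $\eta_i$ must be below the harmonic mean of $\eta_{i\pm j}$, so $1/\eta_i$ must be strictly concave (with $1/\eta_0=1/\eta_{m+1}=0$).

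Your condition, that $\eta_i$ exceeds the arithmetic mean, forces $p>1$ at interior indices. For $m=3$, $i=2$, $j=1$ and $\eta_i=\eta\,i(m+1-i)$ you get $p=4/3$. The mixed term is then a power larger than one of $f$, and no choice of $\varepsilon,b$ absorbs it. For $m=2$ there are no interior indices and your choice happens to work, but the lemma is stated for all $m\ge2$.

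Your displayed system is also unjustified. Replacing $(\lambda_{i-j}\lambda_{i+j})^{\eta_i/2}$ by $\sqrt{\lambda_{i-j}^{\eta_{i-j}}\lambda_{i+j}^{\eta_{i+j}}}$ needs $\eta_i\le\eta_{i\pm j}$, which is false at interior indices of a concave sequence. Even granting it, $\omega(t)\sqrt{ab}\le\omega(t)f$ has exponent exactly one in $f$, so nothing is gained from $f\ge\varepsilon^{-1}$. The EMM step $\sqrt{ab}\le\epsilon a+\epsilon^{-1}b$ works in EMM only because their height function is $\sum_i\varepsilon^{q_i}\alpha_i^{\beta}$, with a common exponent $\beta$ and concave weights $q_i=i(n-i)$. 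You have moved the concave sequence from the weights into the exponents, where it has the opposite effect.

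The repair is the paper's choice $1/\eta_i=A\,i(m+1-i)$ with $A$ large, which also puts $\eta_i$ below the critical exponents. Set $\varkappa=\min_{i,j}(1-p_{i,j})>0$ and take $\varepsilon\asymp\bigl(e^{-\kappa_0t}/\xi(\alpha,t)\bigr)^{1/\varkappa}$.

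A smaller inaccuracy: $u(\theta)$ is not the full expanding horospherical subgroup of $g_t(\bfw)$ for unequal weights, since entries $(i,j)$ with $i,j\le m$ and $w_i>w_j$ are also expanded; the paper's introduction points this out. What you actually need, and what holds, is
\[
\pi_{l+}(u(\theta)v)=v^++\sum_k\theta_k\,\omega\wedge\bfe_k,
\]
where $v=v^++\omega\wedge\bfe_{m+1}$ and $\omega\mapsto(\omega\wedge\bfe_k)_k$ is injective for $l\le m$. Combined with the fact that $V_l^+$ is expanded by at least $e^{l\alpha t}$ uniformly on the cone, this gives your negative-moment bound.
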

\begin{proof}
We follow the argument of \cite[Prop.~5.3]{AG24singular}. We recall the
argument and indicate the modifications needed in the present setting.
We use throughout the notation introduced in Section~\ref{subsec:Height function}.

\medskip
\noindent\textbf{Step 0: Notation.}
For $1\leq l\leq m+1$, let $V_l^+$ be the subspace of $V_l$ spanned by
the vectors $\bfe_I$ for which
\[
    \#\bigl(I\cap\{1,\ldots,m\}\bigr)=\min\{l,m\},
\]
and let $V_l^-$ be the subspace spanned by the remaining basis vectors.
We denote by $\pi_{l+}:V_l\to V_l^+$ and
$\pi_{l-}:V_l\to V_l^-$ the corresponding projections.

For $1\leq l\leq m$, define the $l$-th critical exponent
$\zeta_l(\nu)$ of $\nu$ to be the supremum of all $\gamma\geq0$ for which
there exists $C_{\gamma,l}>0$ such that
\begin{align}
\label{eq:def c gamma l}
    \int_{\R^m}
    \frac{1}{\|\pi_{l+}(u(\theta)v)\|^\gamma}
    \,d\nu(\theta)
    \leq C_{\gamma,l}
\end{align}
for every decomposable vector
$v=v_1\wedge\cdots\wedge v_l\in V_l$ with $\|v\|=1$.

\medskip
\noindent\textbf{Step 1: Positivity of the critical exponents.}
The argument is analogous to the proof of \cite[Lem.~4.4]{AG24singular}.
The only change required is to replace \cite[Prop.~2.2]{AG24singular} by
Lemma~\ref{lem:frostman}.
Applying this estimate to the factors of $\nu$ gives
\[
    \zeta_l(\nu)>0
\]
for every $1\leq l\leq m$.

In \cite{AG24singular} the exponents are expressed in terms of the dimension of
$\supp(\nu)$, the measures $\nu_i$ being assumed there to be Hausdorff measures
on self-similar fractals satisfying the open set condition. That assumption is
not needed for the construction of the height function, and only the
identification of the Frostman exponent is lost: writing $\delta_i>0$ for an
exponent as in Lemma~\ref{lem:frostman} for $\nu_i$, the proof
of \cite[Lem.~4.4]{AG24singular} --- which uses nothing beyond compact support,
Fubini's theorem and that bound --- yields the explicit estimate
\[
    \zeta_l(\nu)
    \ \geq\
    \min\Bigl\{\textstyle\sum_{i\in I}\delta_i\ :\
    I\subseteq\{1,\ldots,m\},\ \#I=m+1-l\Bigr\}
    \qquad(1\leq l\leq m).
\]
Since $\boldsymbol{\hat\eta}$, $\varepsilon$, $b$ and $\kappa_0$ below, and hence the
constants $\kappa_1,\kappa_2,\kappa_3$ of
Proposition~\ref{prop:height function}, all descend from the $\zeta_l(\nu)$,
the whole chain of constants is effective in terms of the $\delta_i$.

\medskip
\noindent\textbf{Step 2: Choice of the exponents.}
Choose $\boldsymbol{\hat{\eta}}
    =
    (\eta_1,\ldots,\eta_m)\in\R_{>0}^m$ such that
\[
    0<\eta_i<\zeta_i(\nu), \quad \text{ and } \quad \frac{1}{\eta_{i-j}}
    +
    \frac{1}{\eta_{i+j}}
    <
    \frac{2}{\eta_i}
\]
whenever $1\leq i\leq m$ and $1\leq j\leq\min\{i,m+1-i\}$,
where we use the convention
\[
    \frac{1}{\eta_0}
    =
    \frac{1}{\eta_{m+1}}
    =0.
\]
Such a choice is possible. Indeed, the second condition states that
the finite sequence $\bigl(1/\eta_i\bigr)_{i=0}^{m+1}$ is strictly
concave, and it is satisfied by
\[
    \frac{1}{\eta_i}
    =
    A\,i(m+1-i), \qquad 0\leq i\leq m+1,
\]
for any $A>0$, since in this case
\[
    \frac{1}{\eta_{i-j}}
    +
    \frac{1}{\eta_{i+j}}
    -
    \frac{2}{\eta_i}
    =
    -2Aj^2
    <
    0 .
\]
Moreover $\eta_i\to 0$ as $A\to\infty$, so the first condition
$\eta_i<\zeta_i(\nu)$ also holds once $A$ is large enough, because all
the critical exponents $\zeta_i(\nu)$ are positive by Step~1.

Set
\[
    \kappa_0
    =
    \alpha\min_{1\leq l\leq m}\{l\eta_l\}, \quad \text{ and } \quad  C'
    =
    \max_{1\leq l\leq m} C_{\eta_l,l},
\]
where $C_{\eta_l,l}$ is as in \eqref{eq:def c gamma l}.

The argument in the proof of \cite[Prop.~5.1]{AG24singular} then gives,
for every $1\leq l\leq m$, $t>0$, every decomposable vector
$v=v_1\wedge\cdots\wedge v_l\in V_l\setminus\{0\}$, and
$\bfw\in[\alpha,1]^m$,
\begin{align*}
    \int_{\R^m}
    \|g_t(\bfw)u(\theta)v\|^{-\eta_l}
    \,d\nu(\theta)
    \leq
    C'
    e^{-\kappa_0 t}
    \|v\|^{-\eta_l}.
\end{align*}

\medskip
\noindent\textbf{Step 3: Contraction of the height function.}
Let $D>1$ be a constant such that, for every $\Lambda\in\X$ and
$\Lambda_1,\Lambda_2\in P(\Lambda)$,
\begin{align*}
    \|\Lambda_1\cap\Lambda_2\|
    \|\Lambda_1+\Lambda_2\|
    \leq
    D\|\Lambda_1\|\|\Lambda_2\|.
\end{align*}
The existence of such a constant follows from
\cite[Lem.~5.6]{EMM98}.

For fixed $\alpha\in(0,1)$ and $t>1$ with $C'e^{-\kappa_0 t}<1$, define
\[
    \xi'(\alpha,t)
    =
    \left\|
        \left\{
            g_t(\bfw)u(\theta):
            \theta\in\supp(\nu),\
            \bfw\in[\alpha,1]^m
        \right\}
    \right\|
\]
and
\[
    \xi(\alpha,t)
    =
    \max_{1\leq i\leq m}
    \bigl(D\xi'(\alpha,t)\bigr)^{\eta_i}.
\]
Furthermore, set
\[
    \an
    =
    \min_{\substack{
        1\leq i\leq m\\
        1\leq j\leq\min\{i,m+1-i\}
    }}
    \left\{
        1-
        \frac{\eta_i}{2}
        \left(
            \frac{1}{\eta_{i-j}}
            +
            \frac{1}{\eta_{i+j}}
        \right)
    \right\},
\]
where we use the convention
\[
    \frac1{\eta_0}
    =
    \frac1{\eta_{m+1}}
    =
    0.
\]
By the choice of $\boldsymbol{\hat{\eta}}$ in Step~2, we have
$\an>0$.

Now choose
\[
    \varepsilon
    =
   \left( \frac{C'e^{-\kappa_0 t}}
         {m\,\xi(\alpha,t)} \right)^{1/\an}
\]
and
\[
    b
    =
    \varepsilon^{-1}
    \left(
        1-C'e^{-\kappa_0 t}
    \right).
\]
Following the argument in
\cite[Prop.~5.2 and Prop.~5.3]{AG24singular}, with the above choices,
we obtain
\[
    \int_{\R^m}
    f_{\varepsilon,\boldsymbol{\hat{\eta}}}
    \bigl(g_t(\bfw)u(\theta)x\bigr)
    \,d\nu(\theta)
    \leq
    2C'e^{-\kappa_0 t}
    f_{\varepsilon,\boldsymbol{\hat{\eta}}}(x)
    +b
\]
for every $x\in\X$ and every
$\bfw\in[\alpha,1]^m$.

Thus, upon setting $ C
    =
    2C'$, we obtain the desired contraction estimate, as required.
\end{proof}

\begin{cor}
\label{cor:iteration}
Let $\boldsymbol{\hat{\eta}}$ be as in Lemma~\ref{lem:contraction}. For every
$0<\alpha<1$, there exist $\varepsilon=\varepsilon(\alpha,\nu)>0$ and
$\kappa_3=\kappa_3(\alpha,\nu)>0$ such that, for every $t>0$,
$\bfw\in[\alpha,1]^m$, and $x\in\X$,
\begin{align}
\label{eq:a2}
    \int_{\R^m}
    f_{\varepsilon,\boldsymbol{\hat{\eta}}}
    \bigl(g_t(\bfw)u(\theta)x\bigr)
    \,d\nu(\theta)
    \ll
    e^{-\kappa_3t}
    f_{\varepsilon,\boldsymbol{\hat{\eta}}}(x)
    +1,
\end{align}
where the implied constant is independent of $x$, $t$, and
$\bfw$.
\end{cor}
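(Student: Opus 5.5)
The plan is to fix a large time step $t_0$ for which Lemma~\ref{lem:contraction} applies, and then iterate the contraction inequality~\eqref{eq:height fun contraction}. The iteration runs along a factorisation of $g_t(\bfw)$ into $n$ copies of $g_{t_0}(\bfw)$ followed by a remainder $g_s(\bfw)$ with $0\le s<t_0$. The constant $\varepsilon=\varepsilon(t_0,\alpha,\nu)$ is then fixed once and for all, so it depends only on $\alpha$ and $\nu$, and $\kappa_3$ will be a multiple of $\kappa_0$. Set $\lambda:=Ce^{-\kappa_0 t_0}$. Enlarging $t_0$ if necessary, I will assume $\lambda<1$, or more conveniently $\lambda\le e^{-\kappa_0 t_0/2}$.

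The key tool for iterating is the self-similar decomposition. With $d=g_{t_0}(\bfw)$, the issue is that $g_t(\bfw)u(\theta)=g_{t-t_0}(\bfw)u(A\theta)g_{t_0}(\bfw)$ for a suitable dilation $A$. This does not reproduce the measure $\nu$ directly. To handle it, I would use Lemma~\ref{lem:self-similar decomposition} coordinatewise with $N_i$ chosen so that the contraction $c_i^{N_i}$ matches $e^{-t_0(1+w_i)}$ up to a bounded factor. This is exactly the role of $\widetilde d$ and~\eqref{eq: measure decompose}: we can write
\[
\int f(g_t(\bfw)u(\theta)x)\,d\nu(\theta)=\iint f\bigl(g_{t-t_0}(\bfw)\,h\,u(\theta')\,\widetilde{g_{t_0}(\bfw)}u(\phi)x\bigr)\,d\nu(\theta')\,d\nu^{(d)}(\phi),
\]
where $h=g_{t}(\bfw)\,\cdot$ bounded correction lies in the compact set $\Omega$. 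Since $\Omega$ is compact, $f_{\varepsilon,\boldsymbol{\hat\eta}}(hy)\asymp f_{\varepsilon,\boldsymbol{\hat\eta}}(y)$, with constants depending only on $\|\Omega\|$. Indeed, each $\lambda_i$ changes by at most a bounded factor under bounded elements, and $\varepsilon^{-1}$ is unchanged.

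A cleaner way to carry this out is to induct on $n$ and work with the \emph{inner} factor instead. Write $g_t=g_{t_0}g_{t-t_0}$ and apply the decomposition at scale $d=g_{t-t_0}(\bfw)$. This gives
\[
\int f(g_t u(\theta)x)\,d\nu=\iint f\bigl(g_{t_0}\,\omega\, u(\theta')\,y_\phi\bigr)\,d\nu(\theta')\,d\nu^{(d)}(\phi),
\]
where $y_\phi=\widetilde d u(\phi)x$ and $\omega$ is a bounded diagonal correction. The correction $\omega$ can be commuted past $u(\theta')$ at the cost of rescaling $\theta'$ by a bounded amount. This rescaling is the one technical point. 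To avoid it, I would absorb $\omega$ into the definition by choosing $\widetilde d$ so that $d\widetilde d^{-1}$ is the correction, as in~\eqref{eq: measure decompose}. In that form the left factor is $g_{t_0}(\bfw)\,(d\widetilde d^{-1})$, which is again of the form $g_{t'}(\bfw')$ times a bounded element.

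Applying Lemma~\ref{lem:contraction} in the $\theta'$-integral then gives
\[
\int f(g_t u x)\,d\nu\le K\bigl(\lambda\textstyle\int f(y_\phi)\,d\nu^{(d)}(\phi)+b\bigr).
\]
Reassembling $\int f(y_\phi)\,d\nu^{(d)}$ as essentially $\int f(g_{t-t_0}u(\theta)x)\,d\nu$ uses the bounded-element comparison once more. The result is the recursion
\[
I(t)\le K\lambda\, I(t-t_0)+Kb.
\]
The main obstacle is that the distortion constant $K$, coming from $\Omega$, multiplies $\lambda$ at each step. I would handle this by choosing $t_0$ so large that $K\lambda\le e^{-\kappa_0 t_0/2}<1$. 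This is possible because $K$ depends only on $\Omega$, and hence only on the contraction ratios, and not on $t_0$. One must also check that $\varepsilon(t_0)$ is fixed before iterating; the lemma gives it for each fixed $t_0$, so this is fine.

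Unrolling the recursion gives
\[
I(t)\le (K\lambda)^n I(s)+\frac{Kb}{1-K\lambda}.
\]
For the remainder time $s\in[0,t_0)$, I would bound $I(s)\ll f_{\varepsilon,\boldsymbol{\hat\eta}}(x)$, since $g_s(\bfw)u(\theta)$ ranges over a compact set for $\theta\in\supp\nu$ and $\bfw\in[\alpha,1]^m$. Since $(K\lambda)^n\le e^{-\kappa_0 n t_0/2}\ll e^{-\kappa_0 t/2}$, this yields~\eqref{eq:a2} with $\kappa_3=\kappa_0/2$. Small $t$, below the threshold of Lemma~\ref{lem:contraction}, is covered by the same compactness bound.
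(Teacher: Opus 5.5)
Your proposal is correct and is essentially the paper's argument. You decompose at the inner scale $d=g_{(n-1)t_0}(\bfw)$ via~\eqref{eq: measure decompose}, remove the bounded diagonal correction $h=d\widetilde d^{\,-1}$ using the log-Lipschitz property of $f_{\varepsilon,\boldsymbol{\hat\eta}}$ (whose constant is independent of $t_0$ and $\varepsilon$), apply Lemma~\ref{lem:contraction} at time $t_0$, iterate the recursion $I_n\le\gamma_0^2Ce^{-\kappa_0t_0}I_{n-1}+\gamma_0 b$, and handle the remainder time by compactness. The one point to state explicitly is that $h$ is diagonal and hence commutes with $g_{t_0}(\bfw)$, so it can be moved to the far left and deleted, rather than merged into a new element $g_{t'}(\bfw')$ whose weight need not lie in $[\alpha,1]^m$.
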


\begin{proof}
Fix $\alpha\in(0,1)$, and let
$\boldsymbol{\hat{\eta}}$, $C$, and $\kappa_0$ be as
in Lemma~\ref{lem:contraction}. By the log-Lipschitz property of the height
function, there exists $\gamma_0>1$ such that
\[
    \gamma_0^{-1}f_{\varepsilon,\boldsymbol{\hat{\eta}}}(x)
    \leq
    f_{\varepsilon,\boldsymbol{\hat{\eta}}}(gx)
    \leq
    \gamma_0 f_{\varepsilon,\boldsymbol{\hat{\eta}}}(x)
\]
for every $x\in\X$ and every $g$ in the compact set $\Omega$ of~\eqref{eq:def Omega}.

Choose $t_0>1$ sufficiently large so that
\[
    \varrho
    :=
    \gamma_0^2Ce^{-\kappa_0 t_0}
    <1.
\]
Fix $\varepsilon=\varepsilon(t_0,\alpha,\nu)$ and
$b=b(t_0,\alpha,\nu)$ as in Lemma~\ref{lem:contraction}, and abbreviate

\[
    I_n
    =
    \int_{\R^m}
    f_{\varepsilon,\boldsymbol{\hat{\eta}}}
    \bigl(g_{nt_0}(\bfw)u(\theta)x\bigr)\,d\nu(\theta),
    \qquad n\geq0 .
\]
We claim that, for every $n\in\N$ and $x\in\X$,
\begin{align}
\label{eq:cor iteration}
    I_n
    \ll
    \varrho^{\,n} f_{\varepsilon,\boldsymbol{\hat{\eta}}}(x)+1.
\end{align}

Indeed, set $d=g_{(n-1)t_0}(\bfw)$, so that $g_{nt_0}(\bfw)=g_{t_0}(\bfw)d$, and
put $h=d\widetilde d^{\,-1}$. As computed in Section~\ref{sec:fractals}, $h$ is
diagonal with entries in $[c_1\cdots c_m,(c_1\cdots c_m)^{-1}]$; in particular
$h$ commutes with $g_{t_0}(\bfw)$ and $h\,u(\theta)\in \Omega$ for every
$\theta\in\operatorname{supp}(\nu)$. Applying~\eqref{eq: measure decompose} to
the function $y\mapsto f_{\varepsilon,\boldsymbol{\hat{\eta}}}(g_{t_0}(\bfw)h\,y)$
and using $h\widetilde d=d$, we obtain
\[
    I_n
    =
    \int_{\R^m}\int_{\R^m}
    f_{\varepsilon,\boldsymbol{\hat{\eta}}}
    \bigl(
    g_{t_0}(\bfw)h\,u(\theta)\,\widetilde d\,u(\phi)x
    \bigr)
    \,d\nu(\theta)\,d\nu^{(d)}(\phi).
\]
Deleting $h$ by the log-Lipschitz property, and then applying
Lemma~\ref{lem:contraction} to the inner integral with base point
$\widetilde d\,u(\phi)x$, yields
\begin{align*}
    I_n
    &\leq
    \gamma_0
    \int_{\R^m}\int_{\R^m}
    f_{\varepsilon,\boldsymbol{\hat{\eta}}}
    \bigl(g_{t_0}(\bfw)u(\theta)\,\widetilde d\,u(\phi)x\bigr)
    \,d\nu(\theta)\,d\nu^{(d)}(\phi)
    \\
    &\leq
    \gamma_0Ce^{-\kappa_0 t_0}
    \int_{\R^m}
    f_{\varepsilon,\boldsymbol{\hat{\eta}}}
    \bigl(\widetilde d\,u(\phi)x\bigr)\,d\nu^{(d)}(\phi)
    +\gamma_0 b .
\end{align*}
Applying~\eqref{eq: measure decompose} once more, this time to
$y\mapsto f_{\varepsilon,\boldsymbol{\hat{\eta}}}(h\,y)$, and using the
log-Lipschitz property again, we obtain
\[
    I_{n-1}
    =
    \int_{\R^m}\int_{\R^m}
    f_{\varepsilon,\boldsymbol{\hat{\eta}}}
    \bigl(h\,u(\theta)\,\widetilde d\,u(\phi)x\bigr)
    \,d\nu(\theta)\,d\nu^{(d)}(\phi)
    \geq
    \gamma_0^{-1}
    \int_{\R^m}
    f_{\varepsilon,\boldsymbol{\hat{\eta}}}
    \bigl(\widetilde d\,u(\phi)x\bigr)\,d\nu^{(d)}(\phi),
\]
and therefore
\[
    I_n\leq\gamma_0^2Ce^{-\kappa_0 t_0}I_{n-1}+\gamma_0 b=\varrho I_{n-1}+\gamma_0 b .
\]
Since $u(\theta)\in \Omega$ for $\theta\in\operatorname{supp}(\nu)$, we have
$I_0\leq\gamma_0f_{\varepsilon,\boldsymbol{\hat{\eta}}}(x)$, and iterating the
last inequality proves~\eqref{eq:cor iteration}.

Since
\[
    \varrho^n
    =
    e^{-n|\log\varrho|}
    =
    e^{-\kappa_3nt_0},
    \qquad
    \kappa_3=\frac{-\log\varrho}{t_0}>0,
\]
we obtain the desired estimate for all $t$ that are integer multiples of
$t_0$.

Finally, write
\[
    t=nt_0+s,
    \qquad 0\leq s<t_0.
\]
The set of elements $g_s(\bfw)$, with
$0\leq s\leq t_0$ and $\bfw\in[\alpha,1]^m$, is compact. Hence,
by the log-Lipschitz property of
$f_{\varepsilon,\boldsymbol{\hat{\eta}}}$, the estimate at time $nt_0$
extends uniformly to time $t$. This proves \eqref{eq:a2}.
\end{proof}

\begin{proof}[Proof of Proposition~\ref{prop:height function}]
Fix $\alpha\in(0,1)$, and let
$\boldsymbol{\hat{\eta}}$ be as in Lemma~\ref{lem:contraction}. We take $\varepsilon$ and $\kappa_3$ to be the constants of
Corollary~\ref{cor:iteration}, and $\kappa_1,\kappa_2$ those of
Lemma~\ref{lem:inj height}; these are the constants of the statement.

By Lemma~\ref{lem:inj height},
\[
    \inj(y)\leq\varsigma
    \quad\Longrightarrow\quad
    f_{\varepsilon,\boldsymbol{\hat{\eta}}}(y)
    \gg \varsigma^{-\kappa_1}.
\]
Hence, by the definition of $J(\bfw,t,\varsigma,x)$ and Markov's
inequality,
\begin{align*}
\nu^{(g_t(\bfw))}
\bigl(J(\bfw,t,\varsigma,x)^c\bigr)
&\leq
\nu^{(g_t(\bfw))}
\left(
\left\{
\phi:
f_{\varepsilon,\boldsymbol{\hat{\eta}}}
\bigl(
\widetilde{g_t(\bfw)}u(\phi)x
\bigr)
\gg \varsigma^{-\kappa_1}
\right\}
\right)
\\
&\ll
\varsigma^{\kappa_1}
\int_{\R^m}
f_{\varepsilon,\boldsymbol{\hat{\eta}}}
\bigl(
\widetilde{g_t(\bfw)}u(\phi)x
\bigr)
\,d\nu^{(g_t(\bfw))}(\phi)
\\
&\ll
\varsigma^{\kappa_1}
\int_{\R^m}
f_{\varepsilon,\boldsymbol{\hat{\eta}}}
\bigl(
g_t(\bfw)u(\theta)x
\bigr)
\,d\nu(\theta),
\end{align*}
 where the last inequality is obtained exactly as in the proof of
Corollary~\ref{cor:iteration}: applying~\eqref{eq: measure decompose} to
$y\mapsto f_{\varepsilon,\boldsymbol{\hat{\eta}}}
\bigl(g_t(\bfw)\widetilde{g_t(\bfw)}^{-1}y\bigr)$ and using that
$g_t(\bfw)\widetilde{g_t(\bfw)}^{-1}u(\theta)\in \Omega$ for
$\theta\in\operatorname{supp}(\nu)$, the log-Lipschitz property changes the
integral by at most the factor $\gamma_0$. In particular the implied constant
depends only on $\alpha$ and $\nu$, and not on $\bfw\in[\alpha,1]^m$, $x\in\X$
or $t\geq0$.

Applying Corollary~\ref{cor:iteration}, we obtain
\[
    \nu^{(g_t(\bfw))}
    \bigl(J(\bfw,t,\varsigma,x)^c\bigr)
    \ll
    \varsigma^{\kappa_1}
    \left(
        e^{-\kappa_3t}
        f_{\varepsilon,\boldsymbol{\hat{\eta}}}(x)
        +1
    \right).
\]
Finally, Lemma~\ref{lem:inj height} gives
\[
    f_{\varepsilon,\boldsymbol{\hat{\eta}}}(x)
    \ll
    \inj(x)^{-\kappa_2}.
\]
Therefore,
\[
    \nu^{(g_t(\bfw))}
    \bigl(J(\bfw,t,\varsigma,x)^c\bigr)
    \ll
    \varsigma^{\kappa_1}
    \left(
        1+
        \inj(x)^{-\kappa_2}e^{-\kappa_3t}
    \right),
\]
which proves the proposition.
\end{proof}

\medskip

\section{Cone upgrade and double equidistribution}
\label{sec:Double equi}

To prove Theorem~\ref{thm:Double equi}, we will need the following results.

\begin{prop}
\label{prop: inductive}
Suppose that there exist a weight $\bfw$, constants
$\delta_{\bfw},c_{\bfw}>0$ and an integer $l_0\geq1$ such
that~\eqref{eq:e3} holds for every $f\in C_c^\infty(\X)$, $x\in\X$, and
$t\geq0$.

Then, for every $\alpha\in(0,1)$, there exist constants
$\delta_\alpha,c_\alpha>0$ such that, for every
$\bfv\in[\alpha,1]^m$, $F_0\in C_b^\infty(\R^m)$,
$F_1\in C_c^\infty(\X)$, $x\in\X$, and $s,\ell\geq0$, we have
\begin{align}
\int_{\R^m}
F_0(\theta)
F_1\bigl(g_s(\bfv)g_\ell(\bfw)u(\theta)x\bigr)
\,d\nu(\theta)
&=
\nu(F_0)
\mu_{\X}(F_1)
+
O\!\left( \|F_0\|_{\bfv,s}\|F_1\|_{C^0} \right)
\nonumber \\
&\qquad\quad+
O\!\left(
e^{-\delta_\alpha \ell}
\inj(x)^{-c_\alpha}
\|F_0\|_{C^0} \|F_1\|_{C^{l_0}}
\right). \label{eq:dd1}
\end{align}
\end{prop}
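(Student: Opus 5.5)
We decompose the measure at the scale of $d=g_s(\bfv)$ using~\eqref{eq:c2}, so that the outer $F_0$-variable is frozen at the level-$n_i(d)$ cylinder location, and the remaining fractal piece is then pushed by the equal-weight factor $g_\ell(\bfw)$.

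Concretely, write $d=g_s(\bfv)$, $A=A^{(d)}$, $\widetilde d$ as in~\eqref{eq:def tilde d}, and $h=d\widetilde d^{\,-1}$. By~\eqref{eq:c2},
\[
\int F_0(\theta)F_1(d\,g_\ell(\bfw)u(\theta)x)\,d\nu(\theta)=\int\!\!\int F_0(A^{-1}\theta+\phi)\,F_1\bigl(d\,g_\ell(\bfw)u(A^{-1}\theta)u(\phi)x\bigr)\,d\nu(\theta)\,d\nu^{(d)}(\phi).
\]
In the first factor we replace $F_0(A^{-1}\theta+\phi)$ by $F_0(\phi)$; by definition of $\|F_0\|_{\bfv,s}$, since $\theta\in\supp\nu$, this costs $O(\|F_0\|_{\bfv,s}\|F_1\|_{C^0})$. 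In the second factor, diagonal matrices commute, and $\widetilde d\,u(A^{-1}\theta)=u(\theta)\widetilde d$. This gives
\[
d\,g_\ell(\bfw)u(A^{-1}\theta)u(\phi)x=h\,g_\ell(\bfw)\,u(\theta)\,\widetilde d\,u(\phi)x .
\]
Thus the inner $\theta$-integral is $\int F_1^h(g_\ell(\bfw)u(\theta)y_\phi)\,d\nu(\theta)$, where $F_1^h=F_1(h\,\cdot)$ and $y_\phi=\widetilde d\,u(\phi)x$. Because $h$ ranges over a compact set of diagonal matrices, $\|F_1^h\|_{C^{l_0}}\ll\|F_1\|_{C^{l_0}}$ and $\mu_\X(F_1^h)=\mu_\X(F_1)$. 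Applying~\eqref{eq:e3} then gives
\[
\mu_\X(F_1)+O\bigl(e^{-\delta_{\bfw}\ell}\inj(y_\phi)^{-c_{\bfw}}\|F_1\|_{C^{l_0}}\bigr).
\]
Integrating against $F_0(\phi)\,d\nu^{(d)}(\phi)$ produces the main term $\nu^{(d)}(F_0)\mu_\X(F_1)$. By~\eqref{eq:c2} applied to $F_0$ alone, $\nu^{(d)}(F_0)=\nu(F_0)+O(\|F_0\|_{\bfv,s})$, so this becomes $\nu(F_0)\mu_\X(F_1)$ up to an admissible error.

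\textbf{Main obstacle.} The remaining task is to control the base-point factor $\int\inj(\widetilde d\,u(\phi)x)^{-c_{\bfw}}\,d\nu^{(d)}(\phi)$ by $\inj(x)^{-c}$ uniformly in $s$ and $\bfv\in[\alpha,1]^m$. This integral may be infinite or unbounded, so we do not bound it directly. Instead we split with a threshold $\varsigma$, using the set $J(\bfv,s,\varsigma,x)$:
\begin{enumerate}
\item On $J$, the error is at most $e^{-\delta_{\bfw}\ell}\varsigma^{-c_{\bfw}}\|F_0\|_{C^0}\|F_1\|_{C^{l_0}}$.
\item On $J^c$, we bound the integrand trivially by $2\|F_0\|_{C^0}\|F_1\|_{C^0}$. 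By Proposition~\ref{prop:height function}, $\nu^{(d)}(J^c)\ll\varsigma^{\kappa_1}(1+\inj(x)^{-\kappa_2})$.
\end{enumerate}
Choosing $\varsigma=e^{-\delta_{\bfw}\ell/(2c_{\bfw})}$ makes both contributions $\ll e^{-\delta_\alpha\ell}\inj(x)^{-c_\alpha}\|F_0\|_{C^0}\|F_1\|_{C^{l_0}}$, with
\[
\delta_\alpha=\min\{\delta_{\bfw}/2,\ \kappa_1\delta_{\bfw}/(2c_{\bfw})\},\qquad c_\alpha=\kappa_2,
\]
where we use $\inj\ll1$. The only uniformity in $\bfv$ enters through Proposition~\ref{prop:height function}, whose constants depend only on $\alpha$. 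The case $s$ small is covered by the same argument, since Proposition~\ref{prop:height function} holds for all $t>0$ and the case $s=0$ is trivial.
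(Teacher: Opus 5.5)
Your proposal is correct and follows the paper's proof essentially step by step. You decompose $\nu$ via~\eqref{eq:c2} at the scale of $g_s(\bfv)$, absorb $h=g_s(\bfv)\widetilde{g_s(\bfv)}^{-1}$ into $F_1$ (this is the paper's $F_{1,s}$), freeze $F_0$ at cost $\|F_0\|_{\bfv,s}$, and split the $\phi$-integral over $J(\bfv,s,\varsigma,x)$ with Proposition~\ref{prop:height function} controlling the complement; your explicit choice $\varsigma=e^{-\delta_{\bfw}\ell/(2c_{\bfw})}$ just makes concrete the paper's final balancing step.
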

\begin{proof}
For $s\geq0$, define
\[
    F_{1,s}(y)
    :=
    F_1\bigl(
        g_s(\bfv)
        \widetilde{g_s(\bfv)}^{-1}y
    \bigr),
\]
where $\widetilde{g_s(\bfv)}$ is defined as in~\eqref{eq:def tilde d}.
Since $g_s(\bfv)\widetilde{g_s(\bfv)}^{-1}$ belongs to a fixed compact subset of $\SL_{m+1}(\R)$, independently of
$s$ and $\bfv\in[\alpha,1]^m$, we have
\begin{align}
\label{eq:e1}
    \|F_{1,s}\|_{C^{l_0}}
    \ll
    \|F_1\|_{C^{l_0}},
\end{align}
where the implied constant depends only on $\alpha$. Moreover, by the
$\SL_{m+1}(\R)$-invariance of $\mu_{\X}$,
\begin{align}
\label{eq:e2}
    \mu_{\X}(F_{1,s})
    =
    \mu_{\X}(F_1).
\end{align}

Applying~\eqref{eq:c2} to the product of $F_0$ and $F_{1,s}$, and
conjugating as in the derivation of~\eqref{eq: measure decompose}, we
obtain
\begin{align*}
&\int_{\R^m}
F_0(\theta)
F_1\bigl(
g_s(\bfv)g_\ell(\bfw)u(\theta)x
\bigr)
\,d\nu(\theta)
\\
&=
\int_{\R^m}
F_0(\theta)
F_{1,s}\bigl(
\widetilde{g_s(\bfv)}
g_\ell(\bfw)u(\theta)x
\bigr)
\,d\nu(\theta)
\\
&=
\int_{\R^m}\int_{\R^m}
F_0\bigl(
\phi+(A^{(g_s(\bfv))})^{-1}\theta
\bigr)
F_{1,s}\bigl(
g_\ell(\bfw)u(\theta)
\widetilde{g_s(\bfv)}u(\phi)x
\bigr)
\,d\nu(\theta)
\,d\nu^{(g_s(\bfv))}(\phi).
\end{align*}
By the definition of $\|F_0\|_{\bfv,s}$,
\[
    \left|
    F_0\bigl(
        \phi+(A^{(g_s(\bfv))})^{-1}\theta
    \bigr)
    -
    F_0(\phi)
    \right|
    \leq
    \|F_0\|_{\bfv,s}.
\]
Therefore,
\begin{align}
\label{eq:inductive decomposition}
&\int_{\R^m}
F_0(\theta)
F_1\bigl(
g_s(\bfv)g_\ell(\bfw)u(\theta)x
\bigr)
\,d\nu(\theta)
\nonumber\\
&=
\int_{\R^m}
\bigl(
F_0(\phi)+O(\|F_0\|_{\bfv,s})
\bigr)
\left(
\int_{\R^m}
F_{1,s}\bigl(
g_\ell(\bfw)u(\theta)
\widetilde{g_s(\bfv)}u(\phi)x
\bigr)
\,d\nu(\theta)
\right)
\,d\nu^{(g_s(\bfv))}(\phi) \nonumber \\
&= \int_{\R^m}
F_0(\phi)
\left(
\int_{\R^m}
F_{1,s}\bigl(
g_\ell(\bfw)u(\theta)
\widetilde{g_s(\bfv)}u(\phi)x
\bigr)
\,d\nu(\theta)
\right)
\,d\nu^{(g_s(\bfv))}(\phi) +O(\|F_0\|_{\bfv,s} \|F_1\|_{C^0}).
\end{align}

 Let $\varsigma>0$, to be chosen later, and split the outer integral
according to the injectivity radius.
For $\phi\in J(\bfv,s,\varsigma,x)$, we have
\[
    \inj\bigl(
        \widetilde{g_s(\bfv)}u(\phi)x
    \bigr)
    >
    \varsigma.
\]
Hence, applying~\eqref{eq:e3} together with~\eqref{eq:e1} and
\eqref{eq:e2}, we obtain
\begin{align*}
&\int_{\R^m}
F_{1,s}\bigl(
g_\ell(\bfw)u(\theta)
\widetilde{g_s(\bfv)}u(\phi)x
\bigr)
\,d\nu(\theta)=
\mu_{\X}(F_1)
+
O\!\left(
e^{-\delta_{\bfw}\ell}
\varsigma^{-c_{\bfw}}
\|F_1\|_{C^{l_0}}
\right).
\end{align*}
On the other hand, for $\phi\in J(\bfv,s,\varsigma,x)^c$, we use the
trivial bound
\[
\int_{\R^m}
F_{1,s}\bigl(
g_\ell(\bfw)u(\theta)
\widetilde{g_s(\bfv)}u(\phi)x
\bigr)
\,d\nu(\theta)
=
\mu_{\X}(F_1)
+
O(\|F_1\|_{C^0}).
\]
It follows from~\eqref{eq:inductive decomposition} that
\begin{align*}
\int_{\R^m}
F_0(\theta)
F_1\bigl(
g_s(\bfv)g_\ell(\bfw)u(\theta)x
\bigr)
\,d\nu(\theta)
&=
\mu_{\X}(F_1)
\left(
\int_{\R^m}F_0(\phi)
\,d\nu^{(g_s(\bfv))}(\phi)
\right) +
O(\|F_0\|_{\bfv,s} \|F_1\|_{C^0})
\\
&\quad+
O\!\left(
e^{-\delta_{\bfw}\ell}
\varsigma^{-c_{\bfw}}
\|F_1\|_{C^{l_0}}\|F_0\|_{C^0}
\right)
\\
&\quad+
O\!\left(
\|F_0\|_{C^0}
\|F_1\|_{C^0}
\nu^{(g_s(\bfv))}
\bigl(J(\bfv,s,\varsigma,x)^c\bigr)
\right).
\end{align*}
By Proposition~\ref{prop:height function},
\[
\nu^{(g_s(\bfv))}
\bigl(J(\bfv,s,\varsigma,x)^c\bigr)
\ll
\varsigma^{\kappa_1}
\left(
1+
e^{-\kappa_3s}
\inj(x)^{-\kappa_2}
\right).
\]
Since $\inj(x)$ is bounded above on $\X$, this gives
\[
\nu^{(g_s(\bfv))}
\bigl(J(\bfv,s,\varsigma,x)^c\bigr)
\ll
\varsigma^{\kappa_1}
\left(
1+
\inj(x)^{-\kappa_2}
\right)
\ll
\varsigma^{\kappa_1}
\inj(x)^{-\kappa_2}.
\]
Moreover, by~\eqref{eq:c2} and the definition of $\|F_0\|_{\bfv,s}$,
\[
\int_{\R^m}
F_0(\phi)
\,d\nu^{(g_s(\bfv))}(\phi)
=
\nu(F_0)
+
O(\|F_0\|_{\bfv,s}).
\]
Combining the preceding estimates and using $\|F_1\|_{C^0}
    \ll
    \|F_1\|_{C^{l_0}}$,
we obtain
\begin{align*}
\int_{\R^m}
F_0(\theta)
F_1\bigl(
g_s(\bfv)g_\ell(\bfw)u(\theta)x
\bigr)
\,d\nu(\theta)
&=
\nu(F_0)
\mu_{\X}(F_1)
+
O\!\left(
\|F_0\|_{\bfv,s}\|F_1\|_{C^0}
\right)
\\
&\quad+
O\!\left(
\left(
e^{-\delta_{\bfw}\ell}\varsigma^{-c_{\bfw}}
+
\varsigma^{\kappa_1}\inj(x)^{-\kappa_2}
\right)
\|F_1\|_{C^{l_0}}\|F_0\|_{C^0}
\right).
\end{align*}
 Choosing $\varsigma$ so as to balance the two error terms
yields~\eqref{eq:dd1} for suitable $\delta_\alpha,c_\alpha>0$, and proves the
proposition.
\end{proof}

\begin{prop}
\label{prop:change of weights}
Suppose that there exist a weight $\bfw$, constants
$\delta_{\bfw},c_{\bfw}>0$ and an integer $l_0\geq1$ such
that~\eqref{eq:e3} holds for every $f\in C_c^\infty(\X)$, $x\in\X$, and
$t\geq0$.

Then, for every $\alpha\in(0,1)$, there exist constants
$\delta_\alpha,c_\alpha>0$ such that, for every
$\bfv\in[\alpha,1]^m$, $f\in C_c^\infty(\X)$, $x\in\X$, and $t\geq0$,
we have
\begin{align}
\label{eq:dd2}
     \int_{\R^m}
    f\bigl(g_t(\bfv)u(\theta)x\bigr)
    \,d\nu(\theta)
    =
    \mu_{\X}(f)
    +
    O\!\left(
        e^{-\delta_\alpha t}
        \inj(x)^{-c_\alpha}
        \|f\|_{C^{l_0}}
    \right).
\end{align}
\end{prop}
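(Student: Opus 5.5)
The plan is to factor $g_t(\bfv)$ as a product $g_s(\bfv'')g_\ell(\bfw)$ and apply Proposition~\ref{prop: inductive} with $F_0\equiv1$, so that the term $\|F_0\|_{\bfv'',s}$ vanishes.

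\textbf{Factorisation.} Since all diagonal elements commute, I look for $\ell=\tau t$ with a fixed $\tau\in(0,1)$ depending only on $\alpha$ and $\bfw$, a time $s\geq0$ and a weight $\bfv''$ with
\[
g_t(\bfv)=g_s(\bfv'')g_\ell(\bfw).
\]
Comparing the last diagonal entries forces $s=t-\ell=(1-\tau)t$. Comparing the $i$-th entries gives
\[
s v''_i = t v_i - \ell w_i,
\qquad\text{that is,}\qquad
v''_i=\frac{v_i-\tau w_i}{1-\tau}.
\]
These $v''_i$ sum to $1$ automatically. Because $v_i\geq\alpha$ and $w_i\leq 1$, the choice $\tau=\alpha/2$ gives
\[
v''_i\geq \frac{\alpha-\alpha/2}{1-\alpha/2}\geq \frac{\alpha}{2}.
\]
Hence $\bfv''\in[\alpha/2,1]^m$ uniformly over $\bfv\in[\alpha,1]^m$. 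This is exactly the point where $\alpha$-balance enters: it keeps the equal-weight part of the flow a fixed proportion of the whole.

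\textbf{Applying Proposition~\ref{prop: inductive}.} I apply it with $\alpha/2$ in place of $\alpha$, weight $\bfv''$, $F_0\equiv1\in C_b^\infty(\R^m)$ and $F_1=f$. Then $\|F_0\|_{\bfv'',s}=0$ and $\|F_0\|_{C^0}=1$, so
\[
\int_{\R^m} f\bigl(g_t(\bfv)u(\theta)x\bigr)\,d\nu(\theta)=\mu_{\X}(f)+O\!\left(e^{-\delta_{\alpha/2}\alpha t/2}\,\inj(x)^{-c_{\alpha/2}}\,\|f\|_{C^{l_0}}\right).
\]
Renaming the constants, $\delta_\alpha:=\delta_{\alpha/2}\,\alpha/2$ and $c_\alpha:=c_{\alpha/2}$, gives~\eqref{eq:dd2}.

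\textbf{Main obstacle.} The only delicate point is uniformity: the implied constants in Proposition~\ref{prop: inductive} must not depend on $s$ or on $\bfv''$ inside the cone. The proposition is stated for every $\bfv$ in the cone and all $s,\ell\geq0$, so its constants depend only on the cone parameter, and this is fine here because $\alpha/2$ is fixed.

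The degenerate case $t$ small (hence $s$ small) causes no trouble, since the proposition allows any $s\geq0$.

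Finally, if one worries about $\bfw$ lying outside $[\alpha,1]^m$, it does not matter: the inequality $w_i\leq 1$ is all that was used.
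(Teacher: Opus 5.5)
Your proposal is correct and is essentially the paper's proof: you factor $g_t(\bfv)=g_s(\bfv'')g_\ell(\bfw)$ with $\ell$ a fixed fraction of $t$, then apply Proposition~\ref{prop: inductive} with cone parameter $\alpha/2$ and $F_0\equiv1$, so that $\|F_0\|_{\bfv'',s}=0$. The only cosmetic difference is that you take the fraction $\tau=\alpha/2$ independent of $\bfv$, whereas the paper takes $\varrho=\tfrac12\min_i v_i$; both choices give $\bfv''\in[\alpha/2,1]^m$ and $\ell\geq\alpha t/2$.
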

\begin{proof}
Fix $\alpha\in(0,1)$ and let $\bfv\in[\alpha,1]^m$. Set
\[
    \varrho
    =
    \frac12\min_{1\leq i\leq m} v_i,
    \qquad
    \ell=\varrho t,
    \qquad
    s=(1-\varrho)t,
\]
and define
\[
    v_i'
    =
    \frac{v_i-\varrho w_i}{1-\varrho},
    \qquad
    \bfv'=(v_1',\ldots,v_m').
\]
Since $\varrho \in (0,1)$, $w_i \leq 1$ and $\varrho\leq v_i/2$, we have
\[
    v_i'
    \geq v_i-\varrho w_i
    \geq v_i-\varrho
    \geq \frac{v_i}{2}
    \geq\frac{\alpha}{2}.
\]
Moreover since
\[
\sum_i v_i'=1,
\]
we get that $\bfv'$ is a weight and belongs to the set $[\alpha/2,1]^m$.

By construction,
\[
    g_t(\bfv)
    =
    g_\ell(\bfw)g_s(\bfv').
\]

We now apply Proposition~\ref{prop: inductive} with
$\bfv'$ in place of $\bfv$, with $F_0\equiv1$ and
$F_1=f$, and obtain
\begin{align*}
    \int_{\R^m}
    f\bigl(g_t(\bfv)u(\theta)x\bigr)\,d\nu(\theta)
    &=
    \int_{\R^m}
    F_0(\theta)
    F_1\bigl(g_s(\bfv')g_\ell(\bfw)u(\theta)x\bigr)
    \,d\nu(\theta)\\
    &=
    \mu_{\X}(f)
    +
    O\!\left(
        e^{-\delta_{\alpha/2}\ell}
        \inj(x)^{-c_{\alpha/2}}
        \|f\|_{C^{l_0}}
    \right).
\end{align*}
Since $\ell=\varrho t
    \geq {\alpha t}/{2}$, estimate~\eqref{eq:dd2} follows for a suitable choice of $\delta$ and $c$. This proves the proposition.
\end{proof}

\smallskip

\begin{proof}[Proof of Theorem~\ref{thm:Double equi}]
Fix $\alpha\in(0,1)$ and weights
$\bfv,\bfv'\in[\alpha,1]^m$.

Fix $t\geq r\geq0$ and set
\[
    \tau=\rho((t,\bfv),(r,\bfv')).
\]
If $\tau\leq0$, the claimed estimate is immediate after enlarging the
implied constant. We may therefore assume that $\tau>0$. Set
\[
    \ell=\frac{\tau}{4},
    \qquad
    s=t-\ell.
\]
Let $F_1,F_2\in C_c^\infty(\X)$ and $x_1,x_2\in\X$.

Define $F_0:\R^m\to\R$ by
\[
    F_0(\theta)
    =
    F_2\bigl(g_r(\bfv')u(\theta)x_2\bigr).
\]
 Conjugation by $g_r(\bfv')$ multiplies the $i$-th coordinate of
$u(\cdot)$ by $e^{r(1+v_i')}$, while the $i$-th entry of
$(A^{(g_s(\bfv))})^{-1}$ is $\ll e^{-s(1+v_i)}$, since
$g_s(\bfv)\widetilde{g_s(\bfv)}^{-1}$ stays within a fixed compact set.
Together with the
inequalities $r+rv_i'-(t+tv_i)\leq-\tau$ and $1+v_i\leq2$, valid for every $i$,
this gives
\begin{align*}
    \|F_0\|_{\bfv,s}
    &\ll
    \max_{1\leq i\leq m}
    e^{(r+rv_i')-(s+sv_i)}
    \|F_2\|_{C^1}
    =
    \max_{1\leq i\leq m}
    e^{(r+rv_i')-(t+tv_i)+\ell(1+v_i)}
    \|F_2\|_{C^1}
    \\
    &\ll
    e^{-\tau+2\ell}\|F_2\|_{C^1}
    =
    e^{-\tau/2}\|F_2\|_{C^1}.
\end{align*}

Note first that, by Proposition~\ref{prop:change of weights}, the
hypothesis~\eqref{eq:e3} of Proposition~\ref{prop: inductive} is satisfied by \emph{every}
weight in $[\alpha,1]^m$, and not merely by the distinguished weight $\bfw$: indeed the
estimate~\eqref{eq:dd2} is precisely~\eqref{eq:e3} with $\bfw$ replaced by
$\bfv$, with constants $\delta_\alpha$ and $c_\alpha$ that depend only on $\alpha$ and
not on $\bfv$. Proposition~\ref{prop: inductive} may therefore be applied with
$\bfv$ in place of $\bfw$, and the constants that it produces again depend only on
$\alpha$.

We now apply Proposition~\ref{prop: inductive} with
$\bfw=\bfv$, $F_0$ as above, and $F_1$. Since
\[
    g_t(\bfv)=g_s(\bfv)g_\ell(\bfv),
\]
we obtain
\begin{align*}
\int_{\R^m}
F_1\bigl(g_t(\bfv)u(\theta)x_1\bigr)
&F_2\bigl(g_r(\bfv')u(\theta)x_2\bigr)
\,d\nu(\theta)
=
\int_{\R^m}
F_0(\theta)
F_1\bigl(g_s(\bfv)g_\ell(\bfv)u(\theta)x_1\bigr)
\,d\nu(\theta)
\\
&=
\nu(F_0) \mu_{\X}(F_1)
+
O\!\left(\|F_0\|_{\bfv,s}\|F_1\|_{C^0}
+
e^{-\delta_\alpha \ell}
\inj(x_1)^{-c_\alpha}
\|F_1\|_{C^{l_0}}\|F_0\|_{C^0}
\right) \\
&= \nu(F_0)\mu_{\X}(F_1)
+
O\!\left(
(e^{-\tau/2}
 +
e^{-\delta_\alpha\tau/4})
\inj(x_1)^{-c_\alpha}
\|F_1\|_{C^{l_0}} \|F_2\|_{C^1}
\right),
\end{align*}
where the last inequality follows by substituting the bounds for
$\|F_0\|_{\bfv,s}$ and $\ell$, and using that $\inj(x)$ is uniformly bounded
above on $\X$.

Finally, applying Proposition~\ref{prop:change of weights} to the
$\bfv'$-weighted integral gives
\[
\nu(F_0)=\int_{\R^m}
F_2\bigl(g_r(\bfv')u(\theta)x_2\bigr)\,d\nu(\theta)
=
\mu_{\X}(F_2)
+
O\!\left(
e^{-\delta_\alpha r}
\inj(x_2)^{-c_\alpha}\|F_2\|_{C^{l_0}}
\right).
\]
Multiplying the last display by $\mu_{\X}(F_1)$ and inserting the
result into the preceding one yields the claimed estimate~\eqref{eq:dd3} for all
sufficiently small $\delta$ and all sufficiently large $c$; the factor $|\mu_{\X}(F_1)|$ in the last error
term of~\eqref{eq:dd3} arises in this way.
\end{proof}

\medskip

\section{The convergence case}
\label{sec:con case}

This section is devoted to the proof of the following proposition, which is
the weighted analogue of the convergence theorem
of~\cite[\S9]{KhalilLuethi}.

\begin{prop}
\label{prop:convergence case}
Let $\alpha\in(0,1)$ and assume that
$\Psi=(\psi_1,\ldots,\psi_m)$ is an
$m$-tuple of non-increasing functions $\psi_i:(0,\infty)\to(0,1)$
which is $\alpha$-balanced and satisfies
\begin{align}
\label{eq:assum con case}
    \sum_{q\in\N}\psi_1(q)\cdots\psi_m(q)<\infty.
\end{align}

Suppose that $\nu$ is a measure on $\R^m$ (not necessarily a product of
self-similar measures) such that there exist $\delta>0$ and $l_0\in\N$ with the following property: for every weight $\bfw\in[\alpha/2,1]^m$, every $f\in C_c^\infty(\X)$ and every $t\geq 0$,
\begin{align}
\label{eq:e5}
\int_{\R^m}
f(g_t(\bfw)u(\theta)\Gamma)\,d\nu(\theta)
=
\mu_{\X}(f)
+
O\!\left(
e^{-\delta t}
\|f\|_{C^{l_0}}
\right).
\end{align}
Then, for $\nu$-almost every $\theta$, there are only finitely many $(p,q)\in\Z^m\times\N$ satisfying $|p_i+q\theta_i|\leq\psi_i(q)$ for $1\leq i\leq m$.
\end{prop}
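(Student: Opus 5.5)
The plan is a Borel--Cantelli argument over dyadic blocks of $q$. Throughout write $\psi=\psi_1\cdots\psi_m$.

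\textbf{Step 1: reduction to dyadic blocks.} Since $\psi$ is non-increasing, $\sum_q\psi(q)<\infty$ is equivalent to $\sum_k 2^k\psi(2^k)<\infty$. For $k\geq0$ let
\[
E_k=\{\theta:\ \exists (p,q),\ 2^k\leq q<2^{k+1},\ |p_i+q\theta_i|\leq\psi_i(q)\ (1\leq i\leq m)\}.
\]
By monotonicity of each $\psi_i$, $E_k$ is contained in the set $E_k'$, defined in the same way but with $\psi_i(q)$ replaced by the constant $z_i=\psi_i(2^k)$. It then suffices to show that $\sum_k\nu(E_k')<\infty$.

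\textbf{Step 2: Dani correspondence.} Set $Q=2^{k+1}$, $z=z_1\cdots z_m$, $r=(zQ)^{1/(m+1)}$ and $D_k=\operatorname{diag}(z_1^{-1}r,\ldots,z_m^{-1}r,Q^{-1}r)$. For $\theta\in E_k'$, the lattice $D_k u(\theta)\Z^{m+1}$ contains the nonzero vector $D_k(p+q\theta,q)$. All of its coordinates have absolute value at most $r$, and its last coordinate is at least $r/2$ in absolute value. Choose a smooth compactly supported $\chi\geq0$ on $\R^{m+1}$ equal to $1$ on this box. Then $\ind_{E_k'}(\theta)\leq\widehat\chi(D_ku(\theta)\Gamma)$, and by Siegel's mean value theorem (Proposition~\ref{prop:roger}), $\int_\X\widehat{\chi}_{r}\,d\mu_\X\asymp r^{m+1}\asymp 2^k\psi(2^k)$, where $\chi_r$ is $\chi$ rescaled to the box of side $r$.

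\textbf{Step 3: applying \eqref{eq:e5}.} We may assume $\psi(q)\geq q^{-1-\varepsilon_0}$ for a small fixed $\varepsilon_0>0$. This is done by replacing $\psi_i$ with $\max(\psi_i,\psi^\alpha_{\mathrm{aux}}\cdots)$, namely the componentwise maximum of $\Psi$ with the $\alpha$-balanced power family $q^{-a_i(1+\varepsilon_0)}$. This preserves $\alpha$-balance (after adjusting $\alpha$ slightly), keeps the series convergent, and only enlarges the solution set. The second hypothesis of Lemma~\ref{lem:dani} then holds for large $k$, so $D_k=g_{s_k}(\bfw_k)$ with $\bfw_k\in[\alpha/2,1]^m$, and $s_k\asymp k$.

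\textbf{Step 4: the main obstacle.} The Siegel transform $\widehat\chi_r$ is unbounded and is not compactly supported on $\X$, and $r$ may be small. I would first try the following. Multiply by a smooth cutoff $h_R$ of the cusp at height $R=e^{\eta s_k}$. The cusp contribution $\nu\{\theta:\lambda_1(D_ku(\theta)\Gamma)>R\}$ is bounded by \eqref{eq:e5} applied to a cusp function, or alternatively via the non-divergence estimates of Section~\ref{sec:non div}, by $e^{-\kappa s_k}$. On the remaining part, $\widehat\chi_r h_R$ has $C^{l_0}$-norm $\ll R^{m+1}$ times polynomial factors in $r^{-1}$. We then have $r^{-1}\ll e^{\varepsilon s_k}$, since $r^{m+1}\asymp 2^k\psi(2^k)\geq 2^{-\varepsilon_0 k}$. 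Consequently \eqref{eq:e5} gives
\[
\nu(E_k')\ll 2^k\psi(2^k)+e^{-(\delta-O(\eta+\varepsilon_0))s_k}.
\]
Choosing $\eta$ and $\varepsilon_0$ small makes both terms summable over $k$, because $s_k\gg k$. Borel--Cantelli then finishes the proof.
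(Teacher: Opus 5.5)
Your architecture (dyadic Borel--Cantelli, the Dani correspondence through Lemma~\ref{lem:dani}, then \eqref{eq:e5} plus Siegel's formula) matches the paper's, but Step~3 fails as stated. The componentwise maximum $\tilde\psi_i=\max(\psi_i,q^{-a_i(1+\varepsilon_0)})$ is non-increasing and, for $a_i\geq\alpha$, even exactly $\alpha$-balanced. However, $\sum_q\tilde\psi_1(q)\cdots\tilde\psi_m(q)$ need not converge. The product of maxima dominates cross terms such as $\psi_1(q)\prod_{j\geq2}q^{-a_j(1+\varepsilon_0)}$, and these are summable only when the weights $w_i(q)=\log\psi_i(q)/\log\psi(q)$ are close to $(a_i)$. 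For instance, $m=2$, $\psi_1(q)=q^{-0.4}$, $\psi_2(q)=q^{-0.7}$ is $\tfrac13$-balanced with $\sum_q\psi(q)<\infty$, yet $a_1=a_2=\tfrac12$ gives $\tilde\psi_1\tilde\psi_2=q^{-0.9-\varepsilon_0/2}$, whose sum diverges.

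Adapting $(a_i)$ to $\Psi$ does not help in general, because $\alpha$-balance only confines $w_i(q)$ to $[\alpha,1-(m-1)\alpha]$. There are non-increasing $\psi_1,\psi_2$ with $\psi_1\psi_2=q^{-1.1}$ whose weight $w_1$ alternates between $4/11$ and $7/11$ at scales $\log q=x_j$ with $x_{j+1}=\tfrac74x_j$. For any fixed $(a_1,a_2)$, pick $i$ with $a_i\leq\tfrac12$. Near the scales where the other coordinate equals $q^{-0.4}$, the cross term is about $q^{-0.9-\varepsilon_0/2}$ over ranges of $q$ long enough to make the series diverge. Your Step~4 relies on the bound $r^{-1}\ll e^{\varepsilon s_k}$ and on the Dani element lying in the cone, and both come from Step~3, so the argument does not close as written. (Shrinking $\alpha$ is not allowed either, since \eqref{eq:e5} is assumed only on $[\alpha/2,1]^m$.)

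The missing idea is to enlarge the thresholds block by block while keeping their weights; this is the paper's Case~2. After Step~1 only the thresholds $z_i=\psi_i(2^k)=\psi(2^k)^{w_{k,i}}$, with $w_{k,i}\in[\alpha,1]$, matter. Replace them by $\max(\psi(2^k),B_k)^{w_{k,i}}$ with $B_k=2^{-(1+\varepsilon_0)(k+1)}$, where $\varepsilon_0>0$ is small enough that the second hypothesis of Lemma~\ref{lem:dani} holds for $z=\max(\psi(2^k),B_k)$ and $Q=2^{k+1}$. The paper takes the extreme value $1+\varepsilon_0=\frac{2-m\alpha}{2-(2m+1)\alpha}$. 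This replacement:
\begin{enumerate}
\item enlarges $E_k'$;
\item keeps the thresholds exactly $\alpha$-balanced;
\item puts the Dani element in $[\alpha/2,1]^m$;
\item adds only the geometric term $2^{k+1}B_k$ to the sum.
\end{enumerate}
Monotonicity in $q$ of the new thresholds is never needed.

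With this change your Step~4 goes through, though it is heavier than the paper's route. The paper never integrates the unbounded Siegel transform against $\nu$. Instead it bounds the indicator of the dyadic event by a smoothed indicator $f_n$, $0\leq f_n\leq 1$, of the set of lattices having a nonzero vector in $[-r_n,r_n]^{m+1}$, whose $C^{l_0}$-norm is bounded uniformly in $n$. Siegel's formula is used only to get $\mu_{\X}(f_n)\ll r_n^{m+1}$. This avoids the cusp truncation and the losses in $R$ and $r^{-1}$.
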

\begin{proof}
Fix $\nu$, $\Psi$, and $\alpha$ as in the statement, and write
\[
\psi(q)=\psi_1(q)\cdots\psi_m(q).
\]
Note that $\psi(q)<1$ for every $q$, since each $\psi_i$ takes
values in $(0,1)$.

For $n\geq1$, let $A_n$ be the set of $\theta\in\R^m$ for which there exists
$(p,q)\in\Z^m\times\N$ such that
\[
1\leq q\leq2^{n+1}
\quad\text{and}\quad
|p_i+\theta_iq|\leq\psi_i(2^n)
\quad\text{for all }1\leq i\leq m.
\]
By the monotonicity of the functions $\psi_i$, it suffices to show that
\[
\nu\left(\limsup_{n\to\infty}A_n\right)=0.
\]
By the Borel--Cantelli lemma, it is further enough to prove that
\[
\sum_{n\geq1}\nu(A_n)<\infty,
\]
which we now prove.

Define
\[
r_n=\left(2^{n+1}\psi(2^n)\right)^{1/(m+1)}.
\]
Since $\psi$ is non-increasing,
\begin{align}
\label{eq:f1}
\sum_{n\geq1}r_n^{m+1}
=\sum_{n\geq1}2^{n+1}\psi(2^n) \ll\sum_{n\geq1}\sum_{q=2^{n-1}}^{2^n}\psi(2^n) \leq\sum_{n\geq1}\sum_{q=2^{n-1}}^{2^n}\psi(q)
\ll\sum_{q\geq1}\psi(q)<\infty.
\end{align}
Thus, after discarding finitely many $n$, we may assume that $r_n\leq1$.

We consider two cases.

\medskip

\noindent\textbf{Case 1.}
Suppose that
\begin{align}
    \label{eq:con case assum 2}
    \psi(2^n)^{2-(2m+1)\alpha}
2^{(2-m\alpha)(n+1)}\geq1.
\end{align}
Define
\[
a_n=
\operatorname{diag}\left(
\psi_1(2^n)^{-1}r_n,\ldots,
\psi_m(2^n)^{-1}r_n,
2^{-(n+1)}r_n
\right).
\]
Then,
\begin{align*}
    A_n \subset
\left\{
    \theta:
    a_nu(\theta)\Z^{m+1}
    \cap[-r_n,r_n]^{m+1}\neq\{0\}
\right\}.
\end{align*}
By Lemma~\ref{lem:dani}, applied with $z_i=\psi_i(2^n)$ and
$Q=2^{n+1}$ --- its two hypotheses being $\alpha$-balance
and~\eqref{eq:con case assum 2} --- we obtain
\[
a_n=g_{t_n}(\bfv_n)
\]
for some $\bfv_n\in[\alpha/2,1]^m$ and
\[
t_n=\log\left(2^{n+1}r_n^{-1}\right)\geq n\log2.
\]

Let $f_n\in C_c^\infty(\X)$ be such that $0\leq f_n\leq1$,
\[
f_n(g\Gamma)=1
\quad\text{whenever}\quad
g\Z^{m+1}\cap[-r_n,r_n]^{m+1}\neq\{0\},
\]
and
\[
\operatorname{supp}(f_n)
\subset
\left\{
    g\Gamma\in\X:
    g  \Z^{m+1}\cap[-2r_n,2r_n]^{m+1}\neq\{0\}
\right\},
\]
with
\[
\|f_n\|_{C^{l_0}}\ll1.
\]
Such a choice is possible by a standard smoothing argument, with all implied
constants independent of $n$.

Then, using \eqref{eq:e5}, we get that
\begin{align*}
        \nu(A_n)
    \leq
    \int_{\R^m}
    f_n(a_nu(\theta)\Gamma)\,d\nu(\theta)
    =
    \mu_{\X}(f_n)
    +O\left(e^{-\delta t_n}\|f_n\|_{C^{l_0}}\right)
    \ll
    \mu_{\X}(f_n)+2^{-\delta n }.
\end{align*}
By the support condition on $f_n$ and Proposition~\ref{prop:roger},
\[
\mu_{\X}(f_n)
\ll
\mu_{\X}\left(
    \widehat{\ind}_{[-2r_n,2r_n]^{m+1}}
\right)
\ll r_n^{m+1}.
\]
Hence
\begin{equation}
    \nu(A_n)\ll r_n^{m+1}+2^{-\delta n }.
    \label{eq:imp 1}
\end{equation}

\medskip

\noindent\textbf{Case 2.}
Suppose that
\[
\psi(2^n)^{2-(2m+1)\alpha}
2^{(2-m\alpha)(n+1)}<1.
\]
By Remark~\ref{rem:balanced}, $\alpha\leq1/m$, so that
$2-m\alpha\geq1>0$; since $\psi(2^n)<1$, this forces
$2-(2m+1)\alpha>0$. Thus,
\[
B_n:=
2^{-\frac{(2-m\alpha)(n+1)}
{2-(2m+1)\alpha}} > \psi(2^n).
\]
Define
\[
w_{n,i}=
\frac{\log\psi_i(2^n)}
{\log\psi(2^n)}, \qquad s_n= (2^{n+1}B_n)^{1/(m+1)}.
\]
Since $\Psi$ is $\alpha$-balanced,
\[
w_{n,i}\in[\alpha,1],
\qquad
\sum_{i=1}^mw_{n,i}=1.
\]
Set
\[
a_n=
\operatorname{diag}\left(
B_n^{-w_{n,1}}s_n,\ldots,
B_n^{-w_{n,m}}s_n,
2^{-(n+1)}s_n
\right).
\]
By explicit computation, we note that $a_n= g_{t_n}(\bfv_n)$ for some weight $\bfv_n \in [\alpha/2,1]^m$ and
\[
t_n= \frac{(n+1)\log 2}{m+1}\left({m} + \frac{(2-m\alpha)}
{2-(2m+1)\alpha} \right) \geq n\log 2.
\]
Indeed, set
\[
    L_n=(n+1)\log 2,
    \qquad
    \kappa=\frac{2-m\alpha}{2-(2m+1)\alpha},
\]
so that $B_n=e^{-\kappa L_n}$ and
$s_n=\bigl(e^{L_n}B_n\bigr)^{1/(m+1)}=e^{(1-\kappa)L_n/(m+1)}$.
Since $2-(2m+1)\alpha>0$ in the present case and
$(2-m\alpha)-(2-(2m+1)\alpha)=(m+1)\alpha>0$, we have $\kappa>1$. The last diagonal
entry of $a_n$ equals
\[
    2^{-(n+1)}s_n
    =
    e^{-\frac{(m+\kappa)L_n}{m+1}},
\]
which identifies $t_n=\frac{(m+\kappa)L_n}{m+1}$, as displayed above; in particular
$t_n>L_n\geq n\log2$ because $\kappa>1$. For $1\leq i\leq m$, the $i$-th diagonal
entry equals
\[
    B_n^{-w_{n,i}}s_n
    =
    e^{\left(\kappa w_{n,i}+\frac{1-\kappa}{m+1}\right)L_n}
    =
    e^{t_nv_{n,i}},
    \qquad\text{where}\qquad
    v_{n,i}
    =
    \frac{(m+1)\kappa w_{n,i}+1-\kappa}{m+\kappa}.
\]
As $\sum_{i=1}^mw_{n,i}=1$, we get
$\sum_{i=1}^mv_{n,i}=\frac{(m+1)\kappa+m(1-\kappa)}{m+\kappa}=1$, so that $\bfv_n$ is
indeed a weight. Finally, $v_{n,i}$ is increasing in $w_{n,i}$ since $\kappa>0$,
 and a direct computation, using the definition of $\kappa$, shows that it takes
the value $\alpha/2$ \emph{exactly} at $w_{n,i}=\alpha$. Since $w_{n,i}\geq\alpha$, this gives
$v_{n,i}\geq\alpha/2$, and $v_{n,i}\leq1$ follows from
$\sum_{i=1}^mv_{n,i}=1$. Hence $\bfv_n\in[\alpha/2,1]^m$.

Moreover,
\begin{align}
    \label{eq:ee1}
A_n \subset \{\theta: a_nu(\theta)\Z^{m+1}  \cap[-s_n,s_n]^{m+1}\neq\{0\} \}.
\end{align}
This is because if $\theta \in A_n$, then there exists
$(p,q)\in\Z^m\times\N$ such that
\[
1\leq q\leq2^{n+1}
\quad\text{and}\quad
|p_i+\theta_iq|\leq\psi_i(2^n) = \psi(2^n)^{w_{n,i}}
\quad\text{for all }1\leq i\leq m.
\]
Hence there exists
$(p,q)\in\Z^m\times\N$ such that
\[
1\leq q\leq2^{n+1}
\quad\text{and}\quad
|p_i+\theta_iq|\leq B_n^{w_{n,i}}
\quad\text{for all }1\leq i\leq m,
\]
which immediately implies that $\theta$ belongs to the right-hand side of~\eqref{eq:ee1}.

 The measure of the right-hand side of~\eqref{eq:ee1} is computed as in
Case~1, by using~\eqref{eq:e5} and constructing a smooth function $f_n$. This
gives that
\begin{equation}
    \nu(A_n)\ll s_n^{m+1}+2^{-\delta n}.
    \label{eq:imp 2}
\end{equation}

 Combining~\eqref{eq:f1},~\eqref{eq:imp 1} and~\eqref{eq:imp 2}, and noting that
$s_n^{m+1}=2^{n+1}B_n=2^{(1-\kappa)(n+1)}$ with $\kappa>1$, so that the middle
series below is geometric, we obtain
\[
\sum_{n\geq1}\nu(A_n)
\ll
\sum_{n\geq1}
\left(
r_n^{m+1}+s_n^{m+1}+2^{-\delta n}
\right)
<\infty.
\]
Hence the proposition follows.
\end{proof}

\medskip

\section{The divergence case}
\label{sec:div case}
This section is devoted to the proof of the following proposition;
its unweighted counterpart is~\cite[Thm.~9.3]{benard2026}.

\begin{prop}
\label{prop:div case}
Assume that $m\geq2$, and let $\alpha\in(0,1)$ and $\beta>1$.
Let $\Psi=(\psi_1,\ldots,\psi_m)$ be an $m$-tuple of non-increasing functions
$\psi_i:(0,\infty)\to(0,1)$
which is $\alpha$-balanced and
$\beta$-proportional, and suppose that
\begin{align}
\label{eq:assum div case}
    \sum_{q\in\N}
    \psi_1(q)\cdots\psi_m(q)
    =\infty.
\end{align}
Suppose that $\nu$ is a measure on $\R^m$ whose
coordinate projections are non-atomic, that is,
\[
    \nu\bigl(\{\theta\in\R^m:\ \theta_i=c\}\bigr)=0
    \qquad\text{for all } 1\leq i\leq m,\ c\in\R,
\]
and that there exist $\delta>0$ and
$l_0\in\N$ with the following two properties.  First,~\eqref{eq:e5} holds
for every $\bfw\in[\alpha/2,1]^m$, every $f\in C_c^\infty(\X)$ and every
$t\geq0$. Second, for every pair of weight
vectors $\bfw,\bfv\in[\alpha/2,1]^m$, every
$f_1,f_2\in C_c^\infty(\X)$, and every $t\geq r\geq0$, the estimate
\begin{align}
\label{eq:dd3 3}
\int_{\R^m}
&f_1(g_t(\bfw)u(\theta)\Gamma)
 f_2(g_r(\bfv)u(\theta)\Gamma)
\,d\nu(\theta) \nonumber\\
&=
\mu_{\X}(f_1)\mu_{\X}(f_2)
+
O\left(
e^{-\delta \rho((t,\bfw),(r,\bfv))}
\|f_1\|_{C^{l_0}}\|f_2\|_{C^{l_0}}
\right) \nonumber\\
&\quad+
O\left(
e^{-\delta r}
\bigl|\mu_{\X}(f_1)\bigr|\|f_2\|_{C^{l_0}}
\right),
\end{align}
holds, where $\rho(\cdot,\cdot)$ is defined as in~\eqref{eq:def rho}.

Then, for $\nu$-almost every $\theta$, we have
\begin{align}
\label{eq:main thm asym 2}
    \cN_T(\theta,\Psi)
    \mathrel{\sim} 2^m
    \sum_{q=1}^T
    \psi_1(q)\cdots\psi_m(q),
\end{align}
as $T\to\infty$.
\end{prop}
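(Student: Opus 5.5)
We follow the scheme of~\cite[\S9]{benard2026} and~\cite{KSW}: decompose the count into dyadic blocks, express each block through a Siegel transform along a diagonal flow $g_{t_n}(\bfv_n)$ with $\bfv_n\in[\alpha/2,1]^m$, and apply a quantitative Borel--Cantelli lemma (Schmidt's lemma in variance form). Since $\psi$ is non-increasing and $\Psi$ is only monotone, we first reduce to a regularised count. For a parameter $\eta>0$ we cut $q\in[2^n,2^{n+1})$ into $\asymp \eta^{-1}$ subintervals $[Q_{n,k},Q_{n,k+1})$ of ratio $1+\eta$. On each subinterval we sandwich the number of solutions between counts with $\psi_i$ frozen at the endpoints. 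The non-atomic coordinate projections of $\nu$ ensure that the boundary solutions with $|p_i+q\theta_i|$ exactly equal to $\psi_i(q)$ are negligible. Dividing $q\le T$ into these boxes and letting $\eta\to0$ at the end reduces~\eqref{eq:main thm asym 2} to an asymptotic of the form $\sum_{(n,k)\le T} X_{n,k}(\theta)\sim\sum \mathbb E X_{n,k}$. Here $X_{n,k}(\theta)$ counts vectors of $a_{n,k}u(\theta)\Z^{m+1}$ in a fixed region $R_{n,k}$, namely the image of the box $\{|x_i|\le\psi_i(Q_{n,k}),\ Q_{n,k}\le y<Q_{n,k+1}\}$.

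As in Proposition~\ref{prop:convergence case}, Lemma~\ref{lem:dani} identifies the diagonal rescaling $a_{n,k}$ with $g_{t_{n,k}}(\bfv_{n,k})$, $\bfv_{n,k}\in[\alpha/2,1]^m$. This holds when the hypothesis $\psi^{2-(2m+1)\alpha}Q^{2-m\alpha}\ge1$ is satisfied. When it fails, the terms are summable exactly as in Case~2 there, so they contribute $O(1)$ and we simply replace $\psi_i$ by $\max\{\psi_i,\psi^{w_i}B\}$-type modifications that change the sum negligibly. After this normalisation $X_{n,k}=\widehat{\chi}_{n,k}(g_{t_{n,k}}(\bfv_{n,k})u(\theta)\Gamma)$, where $\chi_{n,k}$ is the indicator of a fixed-shape box of volume $\asymp 2^m\psi(Q_{n,k})(Q_{n,k+1}-Q_{n,k})$. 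We smooth $\chi_{n,k}$ from above and below at scale $e^{-\epsilon t}$ and truncate the Siegel transform at height $e^{\epsilon t}$. The error from the cusp is controlled by Proposition~\ref{prop:height function}-type non-divergence, or more simply by~\eqref{eq:e5} applied to a cutoff together with the bound $\widehat f\ll\lambda_1^{m+1}$. The resulting functions $f_{n,k}\in C_c^\infty(\X)$ satisfy $\|f_{n,k}\|_{C^{l_0}}\ll e^{C\epsilon t_{n,k}}$.

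The core estimate is the variance bound
\[
\int\Bigl(\sum_{(n,k)\in S}(X_{n,k}-\mathbb E X_{n,k})\Bigr)^2d\nu\ll \sum_{S}\mathbb EX_{n,k}
\]
for every finite interval $S$ of indices; Schmidt's lemma then yields the asymptotic with error $O(\Phi^{1/2}\log^{3/2+\epsilon}\Phi)$, where $\Phi=\sum\psi(q)$. Diagonal terms are handled by~\eqref{eq:e5} and Proposition~\ref{prop:roger}, which give $\mu_\X(f^2)\ll \mathbb E X + (\mathbb EX)^2$ because $\|\chi\|_\infty\le1$. Off-diagonal terms with $t=t_{n,k}\ge r=t_{n',k'}$ are handled by~\eqref{eq:dd3 3}. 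The main term $\mu_\X(f_1)\mu_\X(f_2)$ cancels the product of expectations, and the last error term is $e^{-\delta r}|\mu_\X(f_1)|e^{C\epsilon r}$, which sums to $O(\sum\mathbb EX)$ since $r$ grows linearly in $n'$.

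The expected main obstacle is the first error term $e^{-\delta\rho}$, which must beat the Sobolev growth $e^{C\epsilon(t+r)}$ and be summable over pairs. Here $\beta$-proportionality enters. Writing $t\bfv_{n,k}$ and $r\bfv_{n',k'}$ through the logarithms of $\psi_i$ at $Q_{n,k}$ and $Q_{n',k'}$, the coordinate increments $t(1+v_i)-r(1+v_i')$ are, up to bounded errors, $\log(Q/Q')-\log(\psi_i(Q)/\psi_i(Q'))+\tfrac{1}{m+1}\log\frac{\psi(Q)Q'}{\psi(Q')Q}$-type expressions. By $\beta$-proportionality, all the $\log\psi_i$-increments are within a factor $\beta$ of one another, hence each is $\gg_\beta$ their sum. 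This yields $\rho\gg_{m,\beta}(t-r)$, up to an additive constant. For pairs with $t-r$ small we instead use the trivial bound, and the number of such pairs per index is $O(\eta^{-1}\log)$. The $e^{C\epsilon t}$ loss is absorbed by choosing $\epsilon$ small relative to $\delta$ and taking $\rho\ge t-r\ge\epsilon' t$ for the remaining pairs; pairs with $t-r<\epsilon' t$ are at most $O(\epsilon' t)$ per index and are bounded via Cauchy--Schwarz together with the second-moment bound. Checking that this bookkeeping yields the bound $\ll\sum\mathbb E X$, rather than something larger by a logarithmic factor, is where I expect most care to be needed. If it fails, the fallback is the slightly weaker variance bound $\ll(\sum\mathbb EX)^{2-\kappa}$, which still suffices for an almost-sure asymptotic along a sparse subsequence, combined with monotonicity in $T$.
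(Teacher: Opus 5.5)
The reduction to blocks, the use of Lemma~\ref{lem:dani} and the $\beta$-proportionality computation $\rho\gg t-r$ all match the paper. The gap is in the off-diagonal terms at \emph{comparable} times, i.e.\ pairs of blocks with flow times $r\le t$ and $t-r<\epsilon' t$.

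For these pairs \eqref{eq:dd3 3} gives nothing: the factor $e^{-\delta\rho}$ with $\rho\asymp t-r$ cannot absorb $\|f_1\|_{C^{l_0}}\|f_2\|_{C^{l_0}}$, which is of size $e^{C\epsilon(t+r)}$. Cauchy--Schwarz, $|\mathrm{Cov}(X,X')|\le(\mathrm{Var}\,X\,\mathrm{Var}\,X')^{1/2}$, gives no decay in $t-r$ either. Every index has $\asymp\epsilon' t/\eta$ such partners, so this is fatal rather than a logarithmic loss. Take the critical regime $\psi(q)\asymp q^{-1}$:
\begin{itemize}
\item each block has $\mathbb EX\asymp\mathrm{Var}\,X\asymp\eta$;
\item there are $N\asymp\eta^{-1}\log T$ blocks up to $T$, and $\Phi\asymp\log T$;
\item your bound for the comparable-time pairs is $\asymp\epsilon'\eta N^2\asymp\epsilon'\Phi^2/\eta$.
\end{itemize}
This is of order $\Phi^2$. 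So neither Schmidt's lemma nor your fallback $(\sum\mathbb EX)^{2-\kappa}$ with a sparse subsequence can be reached.

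What is missing is the argument of Lemma~\ref{lem:imp change}, Case~2. For $s_j\le s_k\le(1+\varepsilon_0)s_j$, write $f_1(a_ku(\theta)\Gamma)f_2(a_ju(\theta)\Gamma)=F(a_ku(\theta)\Gamma)$ with $F=f_1\cdot(f_2\circ a_ja_k^{-1})$.
\begin{itemize}
\item Since $a_ja_k^{-1}$ has entries at most $e^{C\varepsilon_0 s_k}$, one has $\|F\|_{C^{l_0}}\ll e^{B\varepsilon_0 s_k}\|f_1\|_{C^{l_0}}\|f_2\|_{C^{l_0}}$.
\item The single-time estimate \eqref{eq:e5}, applied at the \emph{larger} time $s_k$, absorbs this loss.
\item The remaining Haar integral $\int_\X f_1\cdot(f_2\circ a_ja_k^{-1})\,d\mu_\X$ is decorrelated by exponential mixing (Theorem~\ref{thm:mixing}). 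The error is $e^{-\delta_2(s_k-s_j)}\mathcal S_{2,l_0}(f_1)\mathcal S_{2,l_0}(f_2)$, which decays geometrically in $k-j$.
\end{itemize}

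This step needs $L^2$-Sobolev norms that do not grow with time. That is why the paper smooths at a \emph{fixed} scale $\varepsilon$: convolution on $G$ with a bump $\vartheta$, and boxes shrunk or enlarged by $\varepsilon r_k$, at the cost of a factor $1\pm C_m\varepsilon$ removed at the end. Derivatives then fall on $\vartheta$, and Proposition~\ref{prop:roger} controls $\mathcal S_{2,l_0}(\varphi_k)$. The $t$-dependent cusp truncation affects only sup-norms. With your smoothing at scale $e^{-\epsilon t}$, even the mixing step would lose $e^{C\epsilon t}$ and fail for pairs with $t-r\lesssim\epsilon t$.

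There is a separate problem. Your bound $\|f_{n,k}\|_{C^{l_0}}\ll e^{C\epsilon t}$ ignores the box volume $\psi(Q)Q$. For admissible $\Psi$ (constant $\psi_i$, or $\psi(q)\asymp q^{-1/2}$) this volume is a fixed positive power of $e^{t}$. For such blocks \eqref{eq:e5} cannot yield $\mathrm{Var}\,X\ll\mathbb EX$. The paper removes them as $K_{\mathrm{big}}=\{r_k\ge t_k^{\gamma_1}\}$ and counts there deterministically, via Lemma~\ref{lem:dist is log} and \cite[Lem.~9.6]{benard2026}.
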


The proof follows along the lines of that result. We briefly explain the strategy and the
necessary modifications. Before starting the proof, as in the proof of
Proposition~\ref{prop:convergence case}, we define
\[
    \psi(q)=\psi_1(q)\cdots\psi_m(q),
    \qquad q\in[1,\infty).
\]

The proof occupies the whole of this section and is organised as
follows. We first reduce the two-sided count $\cN_T(\theta,\Psi)$ to a
one-sided one, and fix a dyadic-type parameter $\tau>1$; the counting function
is then split into the blocks $\rS_k(\theta)$ (for the lower bound) and
$\rS_k^+(\theta)$ (for the upper bound), each of which is
the Siegel transform of the indicator of a box, evaluated at
$a_ku(\theta)\Gamma$ for a diagonal element $a_k$. The indices $k$ are then split
into three families: $K_{\mathrm{tiny}}$, whose total contribution is finite
and therefore negligible; $K_{\mathrm{big}}$, where the box is so large that a
deterministic lattice point count \cite[Lem.~9.6]{benard2026} suffices; and
$K_{\mathrm{small}}$, where one passes to smoothed Siegel transforms and
applies a second moment estimate together with a Borel--Cantelli argument. The
lower bound is carried out in Section~\ref{subsec:div lower} and the upper
bound, which follows the same lines, in Section~\ref{subsec:div upper}.

We first reduce Proposition~\ref{prop:div case} to a one-sided counting
statement. For $\boldsymbol{\omega}=(\omega_1,\ldots,\omega_m)\in\{\pm1\}^m$ and
$T>0$ put
\[
    \cN^{\boldsymbol{\omega}}_T(\theta,\Psi)
    =
    \#\bigl\{(p,q)\in\Z^m\times\N:\ q\leq T,\
    0<\omega_i(p_i+\theta_iq)<\psi_i(q)\ \text{ for all } i\bigr\}.
\]

Then, for $\nu$-almost every $\theta\in\R^m$ and every $T>0$,
\begin{align}
\label{eq:orthant decomposition}
    \cN_T(\theta,\Psi)
    =
    \sum_{\boldsymbol{\omega}\in\{\pm1\}^m}\cN^{\boldsymbol{\omega}}_T(\theta,\Psi).
\end{align}
Indeed, the solutions not accounted for on the right-hand side are those with
$p_i+\theta_iq=0$ or $|p_i+\theta_iq|=\psi_i(q)$ for some $i$, and for fixed
$(i,p_i,q)$ each of these conditions confines $\theta_i$ to at most two points.
The exceptional set is therefore contained in a countable union of
hyperplanes $\{\theta_i=c\}$, which is $\nu$-null because the coordinate
projections of $\nu$ are non-atomic.

It therefore suffices to prove that, for each fixed
$\boldsymbol{\omega}\in\{\pm1\}^m$ and for $\nu$-almost every $\theta$,
\begin{align}
\label{eq:one sided asym}
    \cN^{\boldsymbol{\omega}}_T(\theta,\Psi)
    \ \sim\
    \sum_{q=1}^{T}\psi_1(q)\cdots\psi_m(q),
    \qquad T\to\infty .
\end{align}
For simplicity of notation we treat only the case
$\boldsymbol{\omega}=(1,\ldots,1)$; the remaining $2^m-1$ cases are proved
in exactly the same way, since no step below uses the position of the
boxes introduced below,
but only their side lengths and their Lebesgue
measure. Thus $\boldsymbol{\omega}$ disappears from the notation from now
on, and we write
\[
    \cN^+_T(\theta,\Psi)
    =
    \#\bigl\{(p,q)\in\Z^m\times\N:\ q\leq T,\
    0<p_i+\theta_iq<\psi_i(q)\ \text{ for all } i\bigr\} .
\]

\subsection{Lower bound}
\label{subsec:div lower}

For $k\in\N$ and $\tau>1$, define
\[
\rS_{\tau,k}(\theta)
=
\#\left\{
(p_1,\ldots,p_m,q)\in\Z^m\times\N:
\begin{array}{l}
q\in[\tau^{k-1},\tau^k),\\
0<p_i+\theta_iq<\psi_i(\tau^k)
\text{ for all }1\leq i\leq m
\end{array}
\right\}.
\]
The half-open interval $[\tau^{k-1},\tau^k)$ is used so that the ranges of $q$
corresponding to distinct $k$ are disjoint.
\begin{lem}
\label{lem:enough for power series}
Assume the hypotheses of Proposition~\ref{prop:div case}. Then, in order to
prove the lower bound in~\eqref{eq:one sided asym}, it is enough to show that,
for every $\tau\in(1,2]$, for $\nu$-almost every
$\theta\in\R^m$, for every $\eta>0$, and for all sufficiently large
$n$,
\begin{align}
\label{eq:red 1 div case}
\sum_{k=1}^n \rS_{\tau,k}(\theta)
\geq
(1-\tau^{-1}-\eta)
\sum_{k=1}^n \psi(\tau^k)\tau^k.
\end{align}
\end{lem}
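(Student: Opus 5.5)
The reduction is a monotonicity-and-sandwich argument, followed by letting $\tau\downarrow1$ along a countable sequence. Assume \eqref{eq:red 1 div case} holds for each $\tau\in(1,2]$. We intersect the corresponding full-measure sets over $\tau_j=1+1/j$, $j\geq1$, and obtain a single $\nu$-conull set of $\theta$ on which \eqref{eq:red 1 div case} holds for every $\tau_j$ and every $\eta>0$. The remaining task is to compare both sides of \eqref{eq:red 1 div case} with $\cN^+_T(\theta,\Psi)$ and with $\sum_{q\leq T}\psi(q)$, respectively.

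\textbf{Left-hand side.} Since $\psi_i$ is non-increasing and $q<\tau^k$, every tuple counted in $\rS_{\tau,k}(\theta)$ satisfies $0<p_i+\theta_iq<\psi_i(\tau^k)\leq\psi_i(q)$. It is therefore counted in $\cN^+_T(\theta,\Psi)$ for any $T\geq\tau^{k}$. The $q$-ranges $[\tau^{k-1},\tau^k)$ are disjoint, so for $\tau^n\leq T<\tau^{n+1}$ we get
\[
\sum_{k=1}^n\rS_{\tau,k}(\theta)\leq\cN^+_T(\theta,\Psi).
\]

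\textbf{Right-hand side.} Again by monotonicity, $\psi(\tau^k)\geq\psi(q)$ whenever $q\geq\tau^k$. Hence
\[
\psi(\tau^k)\tau^k(1-\tau^{-1})\;\geq\;\psi(\tau^k)\,\#\{q\in\N: \tau^k\leq q<\tau^{k+1}\}-O(\psi(\tau^k))\;\geq\;\sum_{\tau^k\leq q<\tau^{k+1}}\psi(q)-O(1),
\]
where the error comes from the number of integers in the interval. I would rather write this as
\[
\sum_{k=1}^n\psi(\tau^k)\tau^k\;\geq\;\frac{1}{\tau-1}\Bigl(\sum_{\tau\leq q<\tau^{n+1}}\psi(q)-O(n)\Bigr).
\]

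\textbf{The main obstacle.} The $O(n)=O(\log T)$ error must be negligible against $\sum_{q\leq T}\psi(q)$, and divergence alone does not ensure this. I would first try to handle the bulk terms directly. Where $\tau^k(\tau-1)\geq1$ is large, the integer count equals $\tau^k(\tau-1)(1+o(1))$, which turns the relative error into a multiplicative factor $1+o(1)$. The small $k$ contribute only $O_\tau(1)$. Thus
\[
\sum_{k\leq n}\psi(\tau^k)\tau^k(\tau-1)\;\geq\;(1-o(1))\sum_{\tau^{K}\leq q<\tau^{n+1}}\psi(q)-O_\tau(1),
\]
with no additive $O(n)$ loss. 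If this is awkward, I would invoke the standard reduction, as in the unweighted case of \cite{benard2026}, to $\Psi$ with $\psi(q)\geq q^{-1}(\log q)^{-2}$. This replacement does not change divergence, and by the convergence case (Proposition~\ref{prop:convergence case}, applied to the added part) it does not change the count beyond $o$ of the main term.

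\textbf{Conclusion.} Combining the two comparisons, dividing by $\tau-1$ and multiplying by $1-\tau^{-1}=(\tau-1)/\tau$ gives
\[
\cN^+_T(\theta,\Psi)\geq\frac{1-\tau^{-1}-\eta}{\tau(1-\tau^{-1})}(1-o(1))\sum_{q\leq T}\psi(q)-O_\tau(1).
\]
Here we also used $\sum_{\tau^{n+1}>q>T}\psi(q)\leq\tau^{n+1}(\tau-1)\psi(T)$. This tail is bounded by $(\tau-1)\tau^2$ times the last block, and hence is a vanishing proportion once $\tau\to1$, since the block sums are comparable across $k$ up to factors of $\tau$. Divergence makes the $O_\tau(1)$ term negligible. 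Taking $\liminf_{T\to\infty}$, then $\eta\to0$, then $\tau=\tau_j\to1$, we obtain $\liminf_{T\to\infty}\cN^+_T(\theta,\Psi)/\sum_{q\leq T}\psi(q)\geq1$. This is the lower bound in \eqref{eq:one sided asym}.
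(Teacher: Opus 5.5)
Your proposal is correct and follows the paper's proof: disjoint $q$-ranges plus monotonicity give $\sum_{k\le n}\rS_{\tau,k}(\theta)\le\cN^+_T(\theta,\Psi)$, and the blockwise comparison with a multiplicative $1+o(1)$ factor for large $k$ (small $k$ absorbed by divergence) is exactly how the paper avoids the additive $O(n)$ loss, via its bounds on $\lceil\tau^{k+1}\rceil-\lceil\tau^k\rceil$; one then concludes by letting $\eta\to0$ and $\tau\to1^+$ along countable sequences. Two things are unnecessary: the tail estimate, since $\tau^{n+1}>T$ means the terms with $T<q<\tau^{n+1}$ only help a lower bound, and the fallback reduction to $\psi(q)\geq q^{-1}(\log q)^{-2}$ (also, your first block display is off by a factor $\tau$, which your second display corrects).
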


\begin{proof} 
The proof is analogous to the corresponding proof in~\cite{benard2026}. We include the details for completeness.

Write $S_\Psi(T)=\sum_{q\leq T}\psi(q)$. Let $\tau\in(1,2]$, let
$\theta$ be such that~\eqref{eq:red 1 div case} holds for every $\eta>0$ and
all large $n$, let $T$ be large and let $n$ be defined by
$\tau^n\leq T<\tau^{n+1}$.

Since the ranges of $q$ occurring in $\rS_{\tau,1},\ldots,\rS_{\tau,n}$ are
disjoint and contained in $[1,T]$, and since $\psi_i(\tau^k)\leq\psi_i(q)$ for
$q\leq\tau^k$, every pair counted by some $\rS_{\tau,k}$ with $k\leq n$ is
counted by $\cN^+_T(\theta,\Psi)$, and no pair is counted twice. Hence
\begin{align}
\label{eq:orthant lower 1}
    \cN^+_T(\theta,\Psi)\ \geq\ \sum_{k=1}^n\rS_{\tau,k}(\theta).
\end{align}

Next we compare $\sum_{k\leq n}\psi(\tau^k)\tau^k$ with $S_\Psi(T)$. Fix
$\varepsilon\in(0,\tfrac12)$. Since $\tau^{k+1}-\tau^k\to\infty$ and
$\bigl|\lceil\tau^{k+1}\rceil-\lceil\tau^{k}\rceil-(\tau^{k+1}-\tau^k)\bigr|\leq1$,
there is $k_0=k_0(\tau,\varepsilon)$ such that
\[
    (1-\varepsilon)(\tau^{k+1}-\tau^k)
    \ \leq\
    \lceil\tau^{k+1}\rceil-\lceil\tau^{k}\rceil
    \ \leq\
    (1-\varepsilon)^{-1}(\tau^{k+1}-\tau^k)
    \qquad(k\geq k_0).
\]
Therefore, $\psi$ being non-increasing,
\[
    (1-\tau^{-1})\psi(\tau^k)\tau^{k+1}
    =
    \psi(\tau^k)\bigl(\tau^{k+1}-\tau^k\bigr)
    \ \geq\
    (1-\varepsilon)\!\!\sum_{q=\lceil\tau^{k}\rceil}^{\lceil\tau^{k+1}\rceil-1}\!\!\psi(q) .
\]
Summing over $k_0\leq k\leq n$ and using $S_\Psi(T)\to\infty$ to absorb the
finitely many terms with $k<k_0$, we obtain, for $T$ large,
\begin{align}
\label{eq:orthant lower 2}
    (1-\tau^{-1})\,\tau\sum_{k=1}^{n}\psi(\tau^k)\tau^k
    \ \geq\
    (1-2\varepsilon)\,S_\Psi(T),
\end{align}
because $\lceil\tau^{n+1}\rceil-1\geq T$.

Combining~\eqref{eq:red 1 div case}, \eqref{eq:orthant lower 1}
and~\eqref{eq:orthant lower 2},
\[
    \liminf_{T\to\infty}\frac{\cN^+_T(\theta,\Psi)}{S_\Psi(T)}
    \ \geq\
    \frac{(1-2\varepsilon)\bigl(1-\tau^{-1}-\eta\bigr)}{(1-\tau^{-1})\,\tau} .
\]
The right-hand side tends to $1$ if we let $\varepsilon\to0$, then $\eta\to0$, and
finally $\tau\to1^+$. Choosing sequences $\tau_j\downarrow1$,
$\eta_j\downarrow0$ and $\varepsilon_j\downarrow0$ along which this happens and
intersecting the corresponding countably many sets of full $\nu$-measure, we
conclude that $\liminf_{T}\cN^+_T(\theta,\Psi)/S_\Psi(T)\geq1$ for
$\nu$-almost every $\theta$.
\end{proof}

For the remainder of this section, we fix $\tau>1$ and write
$\rS_k(\theta)$ in place of $\rS_{\tau,k}(\theta)$. To proceed further,
we define
\begin{align}
    r_k
    &=
    \bigl(\psi(\tau^k)\tau^k\bigr)^{\frac{1}{m+1}},
    \label{eq:def r k}\\
    a_k
    &=
    \begin{pmatrix}
        \psi_1(\tau^k)^{-1}r_k & & & \\
        & \ddots & & \\
        & & \psi_m(\tau^k)^{-1}r_k & \\
        & & & \tau^{-k}r_k
    \end{pmatrix},
    \label{eq:def a k}\\
    R_k
    &=
    [0,r_k]^m\times[\tau^{-1}r_k,r_k]
    \subset\R^{m+1},\nonumber\\
    t_k
    &=
    \tau^k r_k^{-1}
    =
    (\tau^k)^{\frac{m}{m+1}}
    \bigl(\psi(\tau^k)\bigr)^{-\frac{1}{m+1}},\\
    s_k
    &=
    \log t_k.
\end{align}
Then
\[
    \rS_k(\theta)
    =
    \widehat{\ind}_{R_k}\bigl(a_k u(\theta)\Gamma\bigr),
\]
where $\Gamma$ denotes the identity coset in $\X$.

It will be convenient to record at once how this normalisation
compares with that of~\cite{benard2026}, from which several estimates are
quoted below. Writing $\psi_\ast$ for the approximation function
of~\cite{benard2026}, so that $\psi=\psi_\ast^{\,m}$, the quantity
$r_k$ is the same in both papers, whereas the parameter used there is
$\tau^k\psi_\ast(\tau^k)^{-1}$ and is related to $t_k=\tau^kr_k^{-1}$
by
\[
    t_k=\bigl(\tau^k\psi_\ast(\tau^k)^{-1}\bigr)^{\frac{m}{m+1}} .
\]
Accordingly, the statements of \cite[\S9.1--9.2]{benard2026} will be applied
here with their parameter $\gamma_1$ taken to be $\frac{m}{m+1}\gamma_1$; as
both are arbitrarily small, this affects nothing, but we record it to fix the
normalisation.

We stress that no argument below uses the position of the boxes
involved, but only their side lengths and their Lebesgue measure. This is what
allows us to treat only the orthant
$\boldsymbol{\omega}=(1,\ldots,1)$.

We also define $K_{\mathrm{tiny}}$ to be the set of all $k\in\N$ such that
\begin{align}
    \psi(\tau^k)^{2-(2m+1)\alpha}
    (\tau^k)^{2-m\alpha}
    <1,
    \label{eq:assum}
    \quad \text{or}\qquad
    \psi(\tau^k)\tau^k\log^2(\tau^k)
    < 1.
\end{align}
The second condition is the negation of the standing hypothesis
\cite[(9.18)]{benard2026} of \cite[Prop.~9.12]{benard2026}, which in the
normalisation fixed above reads $\psi(\tau^k)\tau^k\log^2(\tau^k)\geq1$.
Discarding $K_{\mathrm{tiny}}$ thus guarantees it for every
$k\in K_{\mathrm{small}}$, which is the reason for the factor
$\log^2(\tau^k)$.

Both hypotheses of Proposition~\ref{prop:div case} remain valid when
$\delta$ is decreased and $l_0$ is increased, and we shall use this
repeatedly without further mention.

We record the following lemma.
\begin{lem}
\label{lem:imp change}
The following properties hold.
\begin{enumerate}
    \item[(i)] The sequence $(t_k)_k$ is increasing. Moreover, there exists
    $\gamma>0$ such that, for all sufficiently large $k$,
    \[
        s_k\geq \gamma k,
        \qquad
        t_k\geq e^{\gamma k}.
    \]

    \item[(ii)] For each $k\in\mathbb N\setminus K_{\mathrm{tiny}}$, there exists a
    weight vector $\bfw_k\in[\alpha/2,1]^m$ such that
    \[
        a_k=g_{s_k}(\bfw_k).
    \]

    \item[(iii)] After decreasing $\delta$ and increasing $l_0$,
    for all
    $k\in\mathbb N\setminus K_{\mathrm{tiny}}$ and
    $f\in C_c^\infty(\X)$,
    \begin{align}
    \label{eq:mix 1}
        \int_{\mathbb R^m} f(a_ku(\theta)\Gamma)\,d\nu(\theta)
        =
        \mu_\X(f)
        +
        O\left(t_k^{-\delta}\|f\|_{C^{l_0}}\right).
    \end{align}
    Furthermore, for all $k>j$ with
    $k,j\in\mathbb N\setminus K_{\mathrm{tiny}}$ and
    $f_1,f_2\in C_c^\infty(\X)$, we have
    \begin{align}
    \label{eq:mix 2}
        &\int_{\mathbb R^m}
        f_1(a_ku(\theta)\Gamma)
        f_2(a_ju(\theta)\Gamma)\,d\nu(\theta)
        =
        \mu_\X(f_1)\mu_\X(f_2)
        +O\left(
            t_k^{-\delta}
            \|f_1\|_{C^{l_0}}\|f_2\|_{C^{l_0}}
        \right) \nonumber\\
        &\qquad
        +O\left(
            t_j^{-\delta}
            \|f_2\|_{C^{l_0}}\bigl|\mu_\X(f_1)\bigr|
        \right)
        +O\left(
            t_k^{-\delta}t_j^\delta
            \mathcal{S}_{2,l_0}(f_1)
            \mathcal{S}_{2,l_0}(f_2)
        \right).
    \end{align}
\end{enumerate}
\end{lem}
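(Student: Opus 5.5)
We prove the three parts in order. Parts (i) and (ii) are bookkeeping. Part (iii) follows from the two hypotheses of Proposition~\ref{prop:div case}, with $\beta$-proportionality converting the separation $\rho$ into a separation of the times $s_k,s_j$. Where the times are too close for that, we use the mixing estimate of Theorem~\ref{thm:mixing}.

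\emph{Part (i).} We have $t_k=\tau^k r_k^{-1}=(\tau^k)^{m/(m+1)}\psi(\tau^k)^{-1/(m+1)}$. Since $\psi$ is non-increasing, the second factor is non-decreasing, while the first increases strictly. Moreover $\psi<1$, so $s_k=\log t_k\geq\frac{m}{m+1}k\log\tau$, and we may take $\gamma=\frac{m}{m+1}\log\tau$.

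\emph{Part (ii).} This is Lemma~\ref{lem:dani} applied with $z_i=\psi_i(\tau^k)$ and $Q=\tau^k$. Its first hypothesis is $\alpha$-balance, and its second is exactly the negation of the first condition in~\eqref{eq:assum}, which holds because $k\notin K_{\mathrm{tiny}}$. With these choices $r=(zQ)^{1/(m+1)}=r_k$ and $s=\log(Q/r)=\log t_k=s_k$, so $a_k=g_{s_k}(\bfw_k)$ with $\bfw_k\in[\alpha/2,1]^m$. Hence~\eqref{eq:mix 1} is~\eqref{eq:e5} at base point $\Gamma$, since $e^{-\delta s_k}=t_k^{-\delta}$.

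\emph{Key computation for (iii).} Write $w_{k,i}$ for the coordinates of $\bfw_k$. The $i$-th diagonal entry of $a_k$ is $e^{s_kw_{k,i}}=\psi_i(\tau^k)^{-1}r_k$, so
\[
s_k(1+w_{k,i})=\log\bigl(\tau^k\psi_i(\tau^k)^{-1}\bigr).
\]
Put $\Delta_i=\log\psi_i(\tau^j)-\log\psi_i(\tau^k)\geq0$. Then
\[
\rho((s_k,\bfw_k),(s_j,\bfw_j))=(k-j)\log\tau+\min_i\Delta_i .
\]
On the other hand, $s_k-s_j=\frac{m}{m+1}(k-j)\log\tau+\frac1{m+1}\sum_i\Delta_i$. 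By $\beta$-proportionality (with $a=\tau^j$, $ab=\tau^k$), $\max_i\Delta_i\leq\beta\min_i\Delta_i$, so $\min_i\Delta_i\geq\frac{1}{m\beta}\sum_i\Delta_i$. Therefore $\rho\geq c(s_k-s_j)$ with $c=c(m,\beta)>0$, that is,
\[
e^{-\delta\rho}\leq (t_j/t_k)^{c\delta}.
\]
The same formula shows that $h:=a_ka_j^{-1}$ has all diagonal entries at least $1$ except the last, which is $t_j/t_k$. Consequently $(t_k/t_j)^{c'}\ll\|h\|\ll (t_k/t_j)^{C}$ for constants $c',C>0$.

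\emph{Part (iii), estimate~\eqref{eq:mix 2}.} Fix a small $\varepsilon>0$ and split into two cases.

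If $t_k/t_j\geq t_k^{\varepsilon}$, we apply~\eqref{eq:dd3 3} with the pair $\bfw_k,\bfw_j\in[\alpha/2,1]^m$. The first error term there is at most $t_k^{-c\varepsilon\delta}\|f_1\|_{C^{l_0}}\|f_2\|_{C^{l_0}}$, and the second is $t_j^{-\delta}|\mu_\X(f_1)|\|f_2\|_{C^{l_0}}$. After shrinking $\delta$, these are the first two terms of~\eqref{eq:mix 2}.

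If $t_k/t_j<t_k^{\varepsilon}$, we rewrite the integrand as $F(a_ju(\theta)\Gamma)$ with $F=(f_1\circ h)\,f_2$. Applying~\eqref{eq:mix 1} at index $j$ gives $\mu_\X(F)+O(t_j^{-\delta}\|F\|_{C^{l_0}})$. Here
\[
\|F\|_{C^{l_0}}\ll\|h\|^{l_0}\|f_1\|_{C^{l_0}}\|f_2\|_{C^{l_0}}\ll t_k^{C l_0\varepsilon}\|f_1\|_{C^{l_0}}\|f_2\|_{C^{l_0}},
\]
and $t_j> t_k^{1-\varepsilon}$. So for $\varepsilon$ small this error is $\ll t_k^{-\delta/2}\|f_1\|_{C^{l_0}}\|f_2\|_{C^{l_0}}$. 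Finally Theorem~\ref{thm:mixing} gives
\[
\mu_\X(F)=\mu_\X(f_1)\mu_\X(f_2)+O\bigl(\|h\|^{-\delta_2}\mathcal S_{2,l'}(f_1)\mathcal S_{2,l'}(f_2)\bigr),
\]
and $\|h\|^{-\delta_2}\ll(t_j/t_k)^{c'\delta_2}$. This produces the last term of~\eqref{eq:mix 2} once $l_0\geq l'$ and $\delta$ is decreased.

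\emph{Main obstacle.} The delicate step is the comparison $\rho\gg s_k-s_j$. The quantity $\rho$ is a minimum over coordinates, so it could be much smaller than the time gap if one $\psi_i$ barely moves while another drops sharply. $\beta$-proportionality is exactly what rules this out. The second point needing care is the case split, which is what lets the final error term carry $L^2$-Sobolev norms rather than $C^{l_0}$ norms.
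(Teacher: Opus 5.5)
Your proposal follows the paper's proof. It uses the same split according to whether $t_k/t_j$ exceeds a small power of $t_k$. In the separated case it uses $\beta$-proportionality to get $\rho((s_k,\bfw_k),(s_j,\bfw_j))\gg s_k-s_j$, and in the close case \eqref{eq:mix 1} followed by Theorem~\ref{thm:mixing}. Your deviations are harmless. In (i) you use $\psi<1$ directly instead of the telescoping bound. In the close case you pull back to index $j$ with $f_1\circ h$ rather than to index $k$ with $f_2\circ(a_ja_k^{-1})$, which costs nothing since $t_j>t_k^{1-\varepsilon}$.

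One intermediate claim is false: the first $m$ diagonal entries of $h=a_ka_j^{-1}$ need not be $\geq1$. From your key computation, $\log h_i=\frac{1}{m+1}(k-j)\log\tau+\Delta_i-\frac{1}{m+1}\sum_l\Delta_l$. This is negative when $\Delta_i$ is below average and $\psi$ drops sharply. For instance, take $m=2$, $\psi_1=\psi^{1/4}$, $\psi_2=\psi^{3/4}$, $k=j+1$ and $\psi(\tau^j)/\psi(\tau^k)=\tau^5$; then $h_1=\tau^{-1/12}$. This configuration is compatible with $k,j\notin K_{\mathrm{tiny}}$ and falls in your close case. That is why the paper bounds the distortion of $a_ja_k^{-1}$ in both directions.

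The bounds you actually use still hold, for different reasons:
\begin{itemize}
\item Both $(k-j)\log\tau$ and $\sum_l\Delta_l$ are $\ll\log(t_k/t_j)$, so every $|\log h_i|\ll\log(t_k/t_j)$. Hence $\|h^{\pm1}\|$ and $\|\mathrm{Ad}(h)\|$ are $\ll(t_k/t_j)^{C}$, which is what the $C^{l_0}$ bound for $(f_1\circ h)f_2$ needs.
\item $\|h\|\geq t_k/t_j$, because $h$ multiplies $\bfe_{\{1,\ldots,m\}}\in V_m$ by $h_1\cdots h_m=t_k/t_j$. On $V_1$ alone, $\max_{i\leq m}h_i\geq(t_k/t_j)^{1/m}$. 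This is what the mixing step needs.
\end{itemize}

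With this justification substituted, the argument is complete. You should also state explicitly that $l_0\geq l'$ is fixed before $\varepsilon$, since $\varepsilon$ must make $Cl_0\varepsilon$ small relative to $\delta$.
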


\begin{proof}
We first prove (i). By definition,
\[
    s_k
    = k\log(\tau)-\log(r_k)
    = \frac{m}{m+1}k\log(\tau)
       -\frac{1}{m+1}\log(\psi(\tau^k)).
\]
Since $\psi$ is non-increasing, the sequence
$\bigl(\psi(\tau^k)\bigr)_k$ is non-increasing, and hence $(s_k)_k$ is
increasing. Moreover,
\[
    s_k-s_{k-1}
    =
    \frac{m}{m+1}\log(\tau)
    -\frac{1}{m+1}
      \log\left(
      \frac{\psi(\tau^k)}{\psi(\tau^{k-1})}
      \right)
    \geq
    \frac{m}{m+1}\log(\tau).
\]
Consequently,
\[
    s_k
    \geq
    s_1+\frac{m}{m+1}(k-1)\log(\tau),
\]
and therefore, for some $\gamma>0$ and all sufficiently large $k$,
\[
    s_k\geq \gamma k,
    \qquad
    t_k=e^{s_k}\geq e^{\gamma k}.
\]
This proves (i).

Part (ii) is Lemma~\ref{lem:dani}, applied with
$z_i=\psi_i(\tau^k)$ and $Q=\tau^k$: the first hypothesis of that lemma is
the $\alpha$-balance of $\Psi$, and the second is the negation of the first
condition in~\eqref{eq:assum}, which holds because
$k\notin K_{\mathrm{tiny}}$. Note that $r=(\psi(\tau^k)\tau^k)^{1/(m+1)}=r_k$
and $\log(Q/r)=\log t_k=s_k$, in accordance
with~\eqref{eq:def r k}.

We now prove (iii).  By part~(ii), the estimate~\eqref{eq:mix 1} is
precisely~\eqref{eq:e5} applied to the weight $\bfw_k$ at time $s_k$, since
$t_k^{-\delta}=e^{-\delta s_k}$. It remains to prove~\eqref{eq:mix 2}.

Part (iii) is the weighted analogue of
\cite[Lem.~7.5, Cor.~7.6]{benard24}, see also \cite[(9.3)]{benard2026}, where
the corresponding estimate is established for the single flow $g_t(\bfw)$ with
$\bfw=(1/m,\ldots,1/m)$. The new feature here is that two different weights
$\bfw_k$ and $\bfw_j$ occur, so that the error term supplied by
\eqref{eq:dd3 3} is governed by $\rho((s_k,\bfw_k),(s_j,\bfw_j))$ rather than
by $s_k-s_j$; it is $\beta$-proportionality that converts the former into the
latter, as the computation below shows.

Fix $k>j$ with $k,j\notin K_{\mathrm{tiny}}$. We distinguish two cases
according to whether
\[
    t_k\geq t_j^{1+\varepsilon_0}
    \qquad\text{or}\qquad
    t_k\leq t_j^{1+\varepsilon_0},
\]
where the parameter $\varepsilon_0>0$ is fixed in Case~2
below.

\medskip
\noindent\textbf{Case 1: $t_k\geq t_j^{1+\varepsilon_0}$.}

In this case,
\[
    \frac{t_k}{t_j}
    \geq
    t_k^{1-\frac{1}{1+\varepsilon_0}}
    =
    t_k^{\frac{\varepsilon_0}{1+\varepsilon_0}}.
\]

By part (ii), we may write
\[
    a_k=g_{s_k}(\bfw_k),
    \qquad
    a_j=g_{s_j}(\bfw_j),
\]
with $\bfw_k,\bfw_j\in[\alpha/2,1]^m$. Therefore, by the hypothesis
\eqref{eq:dd3 3},
\begin{align*}
    \int_{\mathbb R^m}
    f_1(a_ku(\theta)\Gamma)
    f_2(a_ju(\theta)\Gamma)\,d\nu(\theta)
    &=
    \mu_\X(f_1)\mu_\X(f_2) \\
    &\quad+
    O\left(
        e^{-\delta
        \rho((s_k,\bfw_k),(s_j,\bfw_j))}
        \|f_1\|_{C^{l_0}}\|f_2\|_{C^{l_0}}
    \right)\\
    &\quad+
    O\left(
        e^{-\delta s_j}
        \|f_2\|_{C^{l_0}}\bigl|\mu_\X(f_1)\bigr|
    \right).
\end{align*}

By the definition of $\rho$,
\begin{align*}
    \rho((s_k,\bfw_k),(s_j,\bfw_j))
    &=
    \min_{1\leq i\leq m}
    \left\{
        s_k+s_kw_{k,i}
        -(s_j+s_jw_{j,i})
    \right\}\\
    &=
    \min_{1\leq i\leq m}
    \left\{
        \log(\tau^k\psi_i(\tau^k)^{-1})
        -
        \log(\tau^j\psi_i(\tau^j)^{-1})
    \right\}\\
    &=
    \log(\tau^{k-j})
    +
    \min_{1\leq i\leq m}
    \left\{
        \log\psi_i(\tau^j)-\log\psi_i(\tau^k)
    \right\}.
\end{align*}
 By $\beta$-proportionality and the monotonicity of the coordinate
functions,
\[
    \log\psi_i(\tau^j)-\log\psi_i(\tau^k)
    \gg
    \log\psi(\tau^j)-\log\psi(\tau^k)
    \geq0 ,
\]
the implied constant depending only on $m$ and $\beta$. Consequently,
\[
    \rho((s_k,\bfw_k),(s_j,\bfw_j))
    \gg
    \log(\tau^{k-j})+\log\psi(\tau^j)-\log\psi(\tau^k)
    \geq
    \log t_k-\log t_j
    \geq
    \frac{\varepsilon_0}{1+\varepsilon_0}\log t_k ,
\]
the second inequality because $\log t_k-\log t_j$ is the convex combination of
the two non-negative quantities $\log(\tau^{k-j})$ and
$\log\psi(\tau^j)-\log\psi(\tau^k)$ with coefficients $\tfrac{m}{m+1}$ and
$\tfrac{1}{m+1}$.

Since $t_j=e^{s_j}$, we obtain~\eqref{eq:mix 2} after decreasing
$\delta$ in this case.

\medskip
\noindent\textbf{Case 2: $t_k\leq t_j^{1+\varepsilon_0}$.}

In this case,
\[
    \frac{t_k}{t_j}
    \leq
    t_k^{\frac{\varepsilon_0}{1+\varepsilon_0}}.
\]
We first control the distortion of the diagonal matrix
$a_ja_k^{-1}$. Its first $m$ diagonal entries are
\[
    \frac{r_j}{r_k}
    \frac{\psi_i(\tau^k)}{\psi_i(\tau^j)},
    \qquad 1\leq i\leq m,
\]
while its last diagonal entry is $t_kt_j^{-1}$.

 Since $\psi_i$ is non-increasing,
\[
    0\leq
    \log\frac{\psi_i(\tau^j)}{\psi_i(\tau^k)}
    \leq
    \log\frac{\psi(\tau^j)}{\psi(\tau^k)} .
\]
Moreover $t_k/t_j=\tau^{k-j}r_j/r_k$ and $r_k^{m+1}=\tau^k\psi(\tau^k)$, whence
\[
    \log\frac{\psi(\tau^j)}{\psi(\tau^k)}
    =
    (m+1)\log\frac{r_j}{r_k}+\log\tau^{k-j}
    \ll
    \log\frac{t_k}{t_j},
    \qquad
    m\log\frac{r_k}{r_j}
    =
    \log\frac{t_k}{t_j}+\log\frac{\psi(\tau^k)}{\psi(\tau^j)}
    \ll
    \log\frac{t_k}{t_j},
\]
the implied constants depending only on $m$. Since
$\log(t_k/t_j)\leq\frac{\varepsilon_0}{1+\varepsilon_0}\log t_k$, it follows that
there exists $C=C(m)>0$ such that every diagonal entry of $a_ja_k^{-1}$ and of
its inverse is bounded above by $t_k^{C\varepsilon_0}$.

Consequently there exists $B=B(l_0,m)>0$ such that
\begin{align}
\label{eq:bb2}
    \|f_2\circ(a_ja_k^{-1})\|_{C^{l_0}}
    \ll
    \|f_2\|_{C^{l_0}}t_k^{B\varepsilon_0}.
\end{align}
Here we enlarge $l_0$ once and for all so that it exceeds the
integer $l'$ of Theorem~\ref{thm:mixing}, which will be applied below; $B$ is
then determined by $l_0$ and $m$, and we now fix $\varepsilon_0>0$ so small
that $B\varepsilon_0<\delta/2$. Since $\delta$ is decreased only at the very
end of the proof, this choice is legitimate in both cases.

Applying~\eqref{eq:mix 1} to the function
\[
    x\longmapsto f_1(x)f_2(a_ja_k^{-1}x),
\]
we obtain
\begin{align}
    &\int_{\mathbb R^m}
    f_1(a_ku(\theta)\Gamma)
    f_2(a_ju(\theta)\Gamma)\,d\nu(\theta)
    \nonumber\\
    &\qquad=
    \int_\X
    f_1(x)f_2(a_ja_k^{-1}x)\,d\mu_\X(x)
    +O\left(
        t_k^{-\delta}
        \|f_1\cdot(f_2\circ(a_ja_k^{-1}))\|_{C^{l_0}}
    \right). \label{eq:bb1}
\end{align}
 By~\eqref{eq:bb2} and the standard product estimate for the
$C^{l_0}$-norm, and since $B\varepsilon_0<\delta/2$, the error term in~\eqref{eq:bb1} is
\[
    O\bigl(
        t_k^{-\delta+B\varepsilon_0}
        \|f_1\|_{C^{l_0}}\|f_2\|_{C^{l_0}}
    \bigr)
    =
    O\bigl(
        t_k^{-\delta/2}
        \|f_1\|_{C^{l_0}}\|f_2\|_{C^{l_0}}
    \bigr).
\]

It remains to estimate the main term in~\eqref{eq:bb1}.
We apply the mixing
estimate of Theorem~\ref{thm:mixing} to $g=a_ja_k^{-1}$. By the definitions
\eqref{eq:def a k} of $a_k$ and of $t_k$, this is the diagonal matrix whose last entry equals
$t_kt_j^{-1}$, and hence
\[
    \bigl\|a_ja_k^{-1}\bigr\|\geq t_kt_j^{-1}>1 .
\]
Consequently, by Theorem~\ref{thm:mixing},
\begin{align*}
    \int_\X
    f_1(x)f_2(a_ja_k^{-1}x)\,d\mu_\X(x)
    &=
    \mu_\X(f_1)\mu_\X(f_2)
    +O\left(
        \left(t_kt_j^{-1}\right)^{-\delta_2}
        \mathcal{S}_{2,l_0}(f_1)
        \mathcal{S}_{2,l_0}(f_2)
    \right).
\end{align*}

Combining this with the preceding estimate, we obtain~\eqref{eq:mix 2} after
decreasing $\delta$, which completes the proof.
\end{proof}

We now proceed with the proof of~\eqref{eq:red 1 div case}. Fix
$\gamma_1\in(0,1/2)$, and further partition
$\N\setminus K_{\mathrm{tiny}}$ into the two sets
\[
    K_{\mathrm{big}}
    :=
    \left\{
        k\in\N\setminus K_{\mathrm{tiny}}:
        r_k\geq t_k^{\gamma_1}
    \right\}
\]
and
\[
    K_{\mathrm{small}}
    :=
    \left\{
        k\in\N\setminus K_{\mathrm{tiny}}:
        r_k<t_k^{\gamma_1}
    \right\}.
\]
The two parameters entering this partition are chosen in the following
order: first $\gamma_2>0$, small in terms of $m$ and $\alpha$ only, and then
$\gamma_1$, small compared with $\gamma_2$; both are fixed in the proof of
Lemma~\ref{lem:lower K small} below, and the partition above is the one they
determine.

\subsubsection{Lower asymptotics over $K_{\mathrm{big}}$}

We begin by proving the following lemma.

\begin{lem}
\label{lem:dist is log}
For $\nu$-almost every $\theta$ and all sufficiently large $k$, we have
\[
    \mathrm{dist}(a_k u(\theta)\Gamma,\Gamma)
    \ll \log\log t_k.
\]
\end{lem}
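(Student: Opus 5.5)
The plan is a Borel--Cantelli argument based on the non-divergence machinery of Section~\ref{sec:non div}, combined with the comparison between $\inj$ (hence the distance to the base point) and $\lambda_1$ from Lemma~\ref{lem:inj height}. By reduction theory (the $kan$ decomposition used in the proof of Lemma~\ref{lem:inj height}), $\mathrm{dist}(x,\Gamma)\ll 1+\log\lambda_1(x)$, where $\lambda_1(x)^{-1}$ is the length of a shortest nonzero vector of $x$. It therefore suffices to show that, for $\nu$-a.e.\ $\theta$ and all large $k$,
\[
    \lambda_1\bigl(a_ku(\theta)\Gamma\bigr)\leq(\log t_k)^{C}
\]
for some fixed $C$. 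Since $t_k\geq e^{\gamma k}$ by Lemma~\ref{lem:imp change}(i), it is enough to prove
\[
    \nu\bigl(\{\theta:\lambda_1(a_ku(\theta)\Gamma)>(\log t_k)^C\}\bigr)\ll k^{-2}.
\]

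\textbf{Step 1: cone elements.} Let $g=g_s(\bfw)$ with $\bfw\in[\alpha/2,1]^m$. Corollary~\ref{cor:iteration} with base point $\Gamma$ gives $\int f_{\varepsilon,\boldsymbol{\hat\eta}}(g u(\theta)\Gamma)\,d\nu(\theta)\ll1$, and $f_{\varepsilon,\boldsymbol{\hat\eta}}\geq\lambda_1^{\eta_1}$. Markov's inequality then yields
\[
    \nu\bigl(\lambda_1(gu(\theta)\Gamma)>L\bigr)\ll L^{-\eta_1}
\]
uniformly in $s$ and $\bfw$. For $k\notin K_{\mathrm{tiny}}$, Lemma~\ref{lem:imp change}(ii) says $a_k=g_{s_k}(\bfw_k)$, so taking $L=(\log t_k)^C\geq(\gamma k)^C$ with $C\eta_1>2$ gives the required summable bound.

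\textbf{Step 2: all large $k$, including $K_{\mathrm{tiny}}$.} Here $a_k$ need not lie in the cone, and I will compare it with a cone element. I will write $a_k=D_k\,\widetilde a_k$, where $\widetilde a_k=g_{\tilde s_k}(\widetilde\bfw_k)$ has $\widetilde\bfw_k\in[\alpha/2,1]^m$ and $D_k$ is diagonal. The construction follows Case~2 of the proof of Proposition~\ref{prop:convergence case}: keep the exponents $w_{k,i}=\log\psi_i(\tau^k)/\log\psi(\tau^k)\in[\alpha,1]$ supplied by $\alpha$-balance, and replace $\psi(\tau^k)$ by an enlarged value $B_k\geq\psi(\tau^k)$. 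The value $B_k$ is chosen so that both conditions in~\eqref{eq:assum} fail, that is, $B_k^{2-(2m+1)\alpha}\tau^{k(2-m\alpha)}\geq1$ and $B_k\tau^k\log^2\tau^k\geq1$. Then
\[
    \lambda_1(a_ku(\theta)\Gamma)\leq\|D_k^{-1}\|\,\lambda_1(\widetilde a_ku(\theta)\Gamma),
\]
and Step~1 applies to $\widetilde a_k$. The first $m$ entries of $D_k$ are $\geq$ their counterparts at scale $\psi$, so they are harmless. The last entry is $r_k/\tilde r_k$ with $\tilde r_k=(B_k\tau^k)^{1/(m+1)}$, and this contracts.

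\textbf{Main obstacle.} The hard step is to show that this last contraction costs only a power of $\log t_k$. If $\|D_k^{-1}\|$ is not polylogarithmic, I would instead treat separately the vectors whose last coordinate is shrunk. A nonzero vector $a_k(p+q\theta,q)$ with $q\neq0$ has last coordinate $\geq\tau^{-k}r_k\cdot|q|$, while its first $m$ coordinates are multiplied by $r_k\psi_i(\tau^k)^{-1}$. A short vector therefore forces $|p_i+q\theta_i|\ll\psi_i(\tau^k)$ with $|q|\ll\tau^k$. When $\psi(\tau^k)\tau^k$ is tiny, the $\nu$-measure of this event is bounded by the convergence-type estimate~\eqref{eq:imp 1}--\eqref{eq:imp 2} (Siegel mean value and~\eqref{eq:e5}). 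Combining the two regimes, with the threshold $(\log t_k)^{-C}$ absorbed in the box size, should give the summable bound $k^{-2}$.
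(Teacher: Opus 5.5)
Your Step~1 is sound, but Step~2 cannot be completed, because on $K_{\mathrm{tiny}}$ the statement is false for every $\theta$. The lattice $a_ku(\theta)\Z^{m+1}$ always contains the $\theta$-independent vectors $a_k\bfe_i=\psi_i(\tau^k)^{-1}r_k\,\bfe_i$, $1\le i\le m$.

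Here is a concrete example. Take $\psi_i=\psi^{w_i}$ with fixed weights $w_i\ge\alpha$, one of them $w_1=\alpha<1/(m+1)$; this $\Psi$ is $\alpha$-balanced and $\beta$-proportional. Let $\psi=\tau^{-Nk_j}$ on $[\tau^{k_j},\tau^{k_{j+1}})$ with $k_{j+1}=Nk_j$, so that $\sum_q\psi(q)=\infty$. For $N$ large, every $k_j$ lies in $K_{\mathrm{tiny}}$, and
$\psi_1(\tau^{k_j})^{-1}r_{k_j}=\tau^{k_j(N\alpha-(N-1)/(m+1))}\le\tau^{-ck_j}$ with $c>0$. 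Hence $\mathrm{dist}(a_{k_j}u(\theta)\Gamma,\Gamma)\gg k_j$ for all $\theta$, while $\log\log t_{k_j}\asymp\log k_j$.

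This is exactly where your Step~2 breaks. The first $m$ entries of $D_k$ are $(B_k/\psi(\tau^k))^{w_{k,i}-1/(m+1)}$, which are $<1$ when $w_{k,i}<1/(m+1)$, so they are not harmless. Moreover $\|D_k^{-1}\|=(B_k/\psi(\tau^k))^{1/(m+1)}$ is a positive power of $\tau^k$, not of $\log t_k$. Your fallback treats only vectors with $q\neq0$, and so misses precisely these deterministic short vectors with $q=0$.

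The lemma is only meant, and only used, for $k\in\N\setminus K_{\mathrm{tiny}}$. It is applied for $k\in K_{\mathrm{big}}$ and $k\in K_{\mathrm{small}}$, and the paper's proof invokes Lemma~\ref{lem:imp change}, whose parts (ii)--(iii) hold only off $K_{\mathrm{tiny}}$. For those $k$ your Step~1 is already a complete proof, so Step~2 should simply be dropped.

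For $k\notin K_{\mathrm{tiny}}$ your route differs from the paper's. The paper omits the proof, citing \cite[Lem.~9.5]{benard2026} together with Lemma~\ref{lem:imp change}; there the input is the effective equidistribution \eqref{eq:mix 1}. One applies it to $1-f_R$, where $f_R\in C_c^\infty(\X)$ equals $1$ on the $(R-1)$-ball around $\Gamma$, is supported in the $R$-ball, and has $\|f_R\|_{C^{l_0}}\ll1$ uniformly in $R$. This gives $\nu(\{\theta:\mathrm{dist}(a_ku(\theta)\Gamma,\Gamma)\ge R\})\ll e^{-cR}+t_k^{-\delta}$, and Borel--Cantelli with $R=C\log\log t_k$ and $t_k\ge e^{\gamma k}$ concludes.

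You instead use the height function of Corollary~\ref{cor:iteration}, Markov's inequality, and the reduction-theory bound $\mathrm{dist}(x,\Gamma)\ll1+\log\lambda_1(x)$. This is correct, and it gives a clean tail bound $L^{-\eta_1}$ with no $t_k^{-\delta}$ term. However, it uses the self-similar structure of $\nu$ through Section~\ref{sec:non div}. The lemma sits inside the proof of Proposition~\ref{prop:div case}, which allows an arbitrary $\nu$ satisfying \eqref{eq:e5}, \eqref{eq:dd3 3} and having non-atomic projections; in that generality only the equidistribution route is available. For Theorem~\ref{main thm:counting} itself, your route suffices.
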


\begin{proof}
Using Lemma~\ref{lem:imp change} wherever necessary, the proof is
analogous to that of~\cite[Lem.~9.5]{benard2026}, and is therefore
omitted.
\end{proof}

We now prove the following lower bound.

\begin{lem}
\label{lem:lower K big}
Assume that
\begin{align}
\label{eq:assum K big div}
    \sum_{k\in K_{\mathrm{big}}}
    \psi(\tau^k)\tau^k=\infty.
\end{align}
Then, for $\nu$-almost every $\theta$ and every $\eta>0$, there exists
$n_\theta\in\N$ such that, for all $n\geq n_\theta$,
\begin{align}
\label{eq:sum over K big}
    \sum_{\substack{k\in K_{\mathrm{big}}\\ k\leq n}}
    \rS_k(\theta)
    \geq
    (1-\tau^{-1}-\eta)
    \sum_{\substack{k\in K_{\mathrm{big}}\\ k\leq n}}
    \psi(\tau^k)\tau^k.
\end{align}
\end{lem}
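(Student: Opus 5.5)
We plan to follow the deterministic argument of \cite[Lem.~9.6]{benard2026}: for $k\in K_{\mathrm{big}}$ the box $R_k$ is so large relative to the distortion of the lattice $a_ku(\theta)\Z^{m+1}$ that $\rS_k(\theta)=\widehat{\ind}_{R_k}(a_ku(\theta)\Gamma)$ equals $\operatorname{vol}(R_k)$ up to a relative error that tends to $0$. No further equidistribution input is needed beyond Lemma~\ref{lem:dist is log}.

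First, fix $\theta$ in the full-measure set given by Lemma~\ref{lem:dist is log}, so that for all large $k$ we can write $a_ku(\theta)\Gamma=h_k\Gamma$ with $h_k\in G$ and $d(h_k,e)\ll\log\log t_k$. Right-invariance of the metric then gives $\|h_k\|,\|h_k^{-1}\|\ll(\log t_k)^{C}$ for some $C=C(m)$. The lattice $h_k\Z^{m+1}$ therefore has a fundamental domain $h_kD$, where $D=[0,1)^{m+1}$, of diameter $\ll(\log t_k)^C$, and it is unimodular.

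Second, we count points in the box. The count $\#(h_k\Z^{m+1}\cap R_k)$ lies between the volumes of the inner and outer parallel bodies of $R_k$ at distance $\varrho_k\asymp(\log t_k)^C$. Since every side of $R_k$ has length at least $(1-\tau^{-1})r_k\gg r_k\geq t_k^{\gamma_1}$, we obtain
\[
\rS_k(\theta)\geq \operatorname{vol}(R_k)\Bigl(1-O\bigl((\log t_k)^{C}t_k^{-\gamma_1}\bigr)\Bigr)-1,
\]
where the $-1$ accounts for the excluded origin; in fact the origin is not in $R_k$, since its last coordinate is at least $\tau^{-1}r_k>0$. Moreover $\operatorname{vol}(R_k)=(1-\tau^{-1})r_k^{m+1}=(1-\tau^{-1})\psi(\tau^k)\tau^k$, because $r_k^{m+1}=\psi(\tau^k)\tau^k$. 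By Lemma~\ref{lem:imp change}(i), $t_k\geq e^{\gamma k}$, so the relative error is $O(k^Ce^{-\gamma\gamma_1k})$, which tends to $0$. Hence for $k\geq k_1(\theta,\eta)$ we have $\rS_k(\theta)\geq(1-\tau^{-1}-\eta/2)\psi(\tau^k)\tau^k$.

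Finally, we sum over $k\in K_{\mathrm{big}}$ with $k\leq n$. The finitely many terms with $k<k_1$ contribute a fixed amount, and by \eqref{eq:assum K big div} this amount is eventually below $\tfrac{\eta}{2}\sum_{k\in K_{\mathrm{big}},k\leq n}\psi(\tau^k)\tau^k$. This gives \eqref{eq:sum over K big}. Taking $\eta$ along a countable sequence tending to $0$ completes the argument.

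The main obstacle is the geometric step: turning the logarithmic bound on the distance into a uniform lattice-point count in a skewed box with only polylogarithmic error. The point to check is that the boundary error is of order $\varrho_k r_k^{m}$, so the relative error is $\varrho_k/r_k$. This is exactly where the defining condition $r_k\geq t_k^{\gamma_1}$ of $K_{\mathrm{big}}$ is used.
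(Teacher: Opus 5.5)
Your proof is correct and follows the paper's argument. You fix $\theta$ in the full-measure set of Lemma~\ref{lem:dist is log}, turn the distance bound into $\|h_k\|\ll(\log t_k)^{C}$, use $r_k\geq t_k^{\gamma_1}$ for $k\in K_{\mathrm{big}}$ to get $\rS_k(\theta)\geq(1-o(1))(1-\tau^{-1})\psi(\tau^k)\tau^k$, and absorb the finitely many initial terms via \eqref{eq:assum K big div}; the only difference is that you prove the lattice-point lower bound directly by tiling with the fundamental parallelepiped $h_k[0,1)^{m+1}$, whereas the paper cites \cite[Lem.~9.6]{benard2026} for it.
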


\begin{proof}
Fix $\theta$ in the full $\nu$-measure set given by
Lemma~\ref{lem:dist is log}. Then there exists $k_\theta\in\N$ such that,
for every $k\geq k_\theta$, we may write
\[
    a_k u(\theta)\Gamma=g\Gamma
\]
for some $g\in G$, depending on $k$, satisfying, with
$C_0=C_0(m)>0$ as in Lemma~\ref{lem:dist is log},
\[
    \|g\|
    \ll (\log t_k)^{C_0}
    \ll t_k^{\gamma_1/100}.
\]
Moreover, for $k\in K_{\mathrm{big}}$, the box $R_k$ has minimal side
length
\[
    (1-\tau^{-1})r_k
    \geq
    (1-\tau^{-1})t_k^{\gamma_1}.
\]
In particular the ratio of $\|g\|$ to the minimal side length of $R_k$ is
$\ll t_k^{\gamma_1/100-\gamma_1}\leq t_k^{-\gamma_1/2}$.
Therefore, by~\cite[Lem.~9.6]{benard2026}, for all sufficiently large
$k\in K_{\mathrm{big}}$,
\[
    \widehat{\ind}_{R_k}(a_k u(\theta)\Gamma)
    \geq
    \bigl(1-t_k^{-\gamma_1/4}\bigr)
    \operatorname{Leb}(R_k).
\]
Since
\[
    \operatorname{Leb}(R_k)
    =
    (1-\tau^{-1})\psi(\tau^k)\tau^k,
\]
we obtain
\[
    \rS_k(\theta)
    \geq
    \bigl(1-t_k^{-\gamma_1/4}\bigr)
    (1-\tau^{-1})
    \psi(\tau^k)\tau^k
\]
for all sufficiently large $k\in K_{\mathrm{big}}$. Consequently,
\begin{align*}
    \sum_{\substack{k\in K_{\mathrm{big}}\\ k_\theta\leq k\leq n}}
    \rS_k(\theta)
    \geq
    (1-\tau^{-1})
    \sum_{\substack{k\in K_{\mathrm{big}}\\ k_\theta\leq k\leq n}}
    \bigl(1-t_k^{-\gamma_1/4}\bigr)
    \psi(\tau^k)\tau^k.
\end{align*}
Since $t_k\to\infty$, the factor
$t_k^{-\gamma_1/4}$ tends to zero. Thus, for all sufficiently large $k$, say $k \geq k_\eta'$, we have
\[
    (1-\tau^{-1})
    \bigl(1-t_k^{-\gamma_1/4}\bigr)
    \geq
    1-\tau^{-1}-\frac{\eta}{2}.
\]
It follows that
\[
    \sum_{\substack{k\in K_{\mathrm{big}}\\\max\{k_\eta', k_\theta\}\leq k\leq n}}
    \rS_k(\theta)
    \geq
    \left(1-\tau^{-1}-\frac{\eta}{2}\right)
    \sum_{\substack{k\in K_{\mathrm{big}}\\ \max\{k_\eta', k_\theta\}\leq k\leq n}}
    \psi(\tau^k)\tau^k.
\]
Finally, the assumption~\eqref{eq:assum K big div} implies that the
finitely many indices $k<\max\{k_\eta', k_\theta\}$ contribute
negligibly compared with
\[
    \sum_{\substack{k\in K_{\mathrm{big}}\\ k\leq n}}
    \psi(\tau^k)\tau^k
\]
as $n\to\infty$. Hence, after increasing $n_\theta$ if necessary, we
obtain~\eqref{eq:sum over K big}.
\end{proof}

\subsubsection{Lower asymptotics over $K_{\mathrm{small}}$}

 We now prove the analogous lower bound over $K_{\mathrm{small}}$.

\begin{lem}
\label{lem:lower K small}
Assume that
\begin{align}
\label{eq:assum K small div}
    \sum_{k\in K_{\mathrm{small}}}
    \psi(\tau^k)\tau^k=\infty.
\end{align}
Then, for $\nu$-almost every $\theta$ and every $\eta>0$, there exists
$n_\theta\in\N$ such that, for all $n\geq n_\theta$,
\begin{align}
\label{eq:estimate K small}
    \sum_{\substack{k\in K_{\mathrm{small}}\\ k\leq n}}
    \rS_k(\theta)
    \geq
    (1-\tau^{-1}-\eta)
    \sum_{\substack{k\in K_{\mathrm{small}}\\ k\leq n}}
    \psi(\tau^k)\tau^k.
\end{align}
\end{lem}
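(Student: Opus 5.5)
We follow the scheme of \cite[Prop.~9.12]{benard2026}: a smoothing of the box indicators, followed by a second moment estimate and a Borel--Cantelli argument along a sparse sequence of $n$. The two parameters are chosen first: $\gamma_2>0$ small in terms of $m$ and $\alpha$, then $\gamma_1\ll\gamma_2$.

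\textbf{Step 1 (smoothing).} For $k\in K_{\mathrm{small}}$, take a smooth function $\chi_k$ on $\R^{m+1}$ with $\chi_k\leq\ind_{R_k}$, supported in $R_k$, equal to $1$ outside a $t_k^{-\gamma_2}r_k$-neighbourhood of $\partial R_k$, and with $C^{l}$-norm $\ll (t_k^{\gamma_2}/r_k)^{l}$. Then
\[
\rS_k(\theta)\geq\widehat{\chi}_k(a_ku(\theta)\Gamma),\qquad \int\chi_k=(1-O(t_k^{-\gamma_2}))\operatorname{Leb}(R_k).
\]
Next truncate $\widehat{\chi}_k$ by a height cutoff, replacing it with $F_k=\widehat{\chi}_k\cdot\eta_k$, where $\eta_k$ is a smooth cutoff to $\{\inj\geq t_k^{-\gamma_2}\}$. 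Since $r_k<t_k^{\gamma_1}$, the resulting $F_k\in C_c^\infty(\X)$ satisfies
\[
\|F_k\|_{C^{l_0}}\ll t_k^{C(\gamma_1+\gamma_2)},\qquad \mathcal S_{2,l_0}(F_k)^2\ll t_k^{C\gamma_2}\operatorname{Leb}(R_k),
\]
the latter by Proposition~\ref{prop:roger}. The mass lost in the truncation is $\ll t_k^{-c\gamma_2}\operatorname{Leb}(R_k)$, again by Proposition~\ref{prop:roger} via Cauchy--Schwarz against the measure of the cusp. These are exactly the estimates of \cite[\S9.2]{benard2026}, and they depend only on side lengths, so the weights play no role here.

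\textbf{Step 2 (variance bound).} Put $X_n(\theta)=\sum_{k\in K_{\mathrm{small}},\,k\leq n}\bigl(F_k(a_ku(\theta)\Gamma)-\mu_\X(F_k)\bigr)$ and $\Sigma_n=\sum_{k\in K_{\mathrm{small}},k\leq n}\operatorname{Leb}(R_k)$. Expanding $\int X_n^2\,d\nu$:
\begin{itemize}
\item diagonal terms are controlled by \eqref{eq:mix 1} applied to $F_k^2$;
\item off-diagonal terms $k>j$ are controlled by \eqref{eq:mix 2} from Lemma~\ref{lem:imp change}(iii).
\end{itemize}
The error contributions are $t_k^{-\delta}t_k^{C(\gamma_1+\gamma_2)}$, $t_j^{-\delta}t_j^{C\gamma_2}\operatorname{Leb}(R_k)$, and $t_k^{-\delta}t_j^{\delta}t_k^{C\gamma_2}\operatorname{Leb}(R_k)^{1/2}\operatorname{Leb}(R_j)^{1/2}$. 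By Lemma~\ref{lem:imp change}(i), $t_k\geq e^{\gamma k}$ and $t_k/t_j$ grows at least geometrically in $k-j$. Hence, once $\gamma_2$ (and then $\gamma_1$) is small compared with $\delta$, the double sums converge up to a factor $\Sigma_n$, giving
\[
\int X_n^2\,d\nu\ll \Sigma_n^{2-\epsilon}+\Sigma_n
\]
for some $\epsilon>0$. The condition $\psi(\tau^k)\tau^k\log^2\tau^k\geq1$ (from $k\notin K_{\mathrm{tiny}}$) guarantees that $\operatorname{Leb}(R_k)$ is not too small relative to $t_k^{-\delta}$, so the $\mu_\X(F_k)$-type errors are summable against $\Sigma_n$.

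\textbf{Step 3 (Borel--Cantelli).} Chebyshev gives $\nu(|X_n|>\eta\Sigma_n)\ll\eta^{-2}\Sigma_n^{-\epsilon}$. Choose $n_\ell$ with $\Sigma_{n_\ell}\approx(1+\eta)^{\ell}$; these tail bounds are summable, so almost surely $|X_{n_\ell}|\leq\eta\Sigma_{n_\ell}$ eventually. Monotonicity of $n\mapsto\sum_{k\leq n}\rS_k$ and $\Sigma_{n_{\ell+1}}\leq(1+2\eta)\Sigma_{n_\ell}$ then interpolate to all $n$, using $\operatorname{Leb}(R_k)\leq t_k^{(m+1)\gamma_1}=o(\Sigma_n)$ to control the jumps. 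Combined with the lower bounds $\rS_k\geq F_k(a_ku(\theta)\Gamma)$ and $\mu_\X(F_k)\geq(1-o(1))(1-\tau^{-1})\psi(\tau^k)\tau^k$, this yields \eqref{eq:estimate K small}.

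\textbf{Main obstacle.} The key difficulty is the off-diagonal sum in Step 2, specifically making the mixing term $t_k^{-\delta}t_j^{\delta}\mathcal S_{2}\mathcal S_{2}$ summable when the norm losses $t^{C\gamma_2}$ are present. I would handle it by first fixing $\gamma_2<\delta/(4C)$ and then summing geometrically in $k-j$. If geometric growth of $t_k/t_j$ fails when $\psi$ decreases slowly, I would fall back on $s_k-s_{k-1}\geq\frac{m}{m+1}\log\tau$ from the proof of Lemma~\ref{lem:imp change}(i).
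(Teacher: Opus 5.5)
There is a genuine gap in Step~2, in the off-diagonal mixing term for \emph{nearby} pairs $j<k$. You smooth $\ind_{R_k}$ at the shrinking scale $t_k^{-\gamma_2}r_k$, so every derivative of $\widehat{\chi}_k$ along $G$ costs a factor $t_k^{\gamma_2}$. This loss is real: the $l_0$-th derivatives are Siegel transforms of functions of size $t_k^{l_0\gamma_2}$ supported on a boundary layer of volume $\asymp t_k^{-\gamma_2}\operatorname{Leb}(R_k)$. Hence, even for the centred function, Rogers' formula only gives $\mathcal S_{2,l_0}\ll t_k^{C\gamma_2}\operatorname{Leb}(R_k)^{1/2}$ with some $C>0$.

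In \eqref{eq:mix 2} this norm is multiplied only by $t_k^{-\delta}t_j^{\delta}$. For $j=k-1$ you have $s_k-s_{k-1}=\frac{m}{m+1}\log\tau+\frac{1}{m+1}\log\frac{\psi(\tau^{k-1})}{\psi(\tau^k)}$, which stays bounded when $\psi$ varies slowly. Take, e.g., $\psi_1=\cdots=\psi_m$ with $\psi(q)=1/(q\log q)$ for large $q$. Then every large $k$ lies in $K_{\mathrm{small}}$, $\operatorname{Leb}(R_k)\asymp 1/k$, $\Sigma_n\asymp\log n$, and $t_{k-1}/t_k\to\tau^{-1}$. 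The bound your method gives for the consecutive pairs alone is therefore $\asymp\sum_{k\leq n}t_k^{C\gamma_2}/k$, which is exponentially large in $n$, so $\int X_n^2\,d\nu\ll\Sigma_n^{2-\epsilon}$ cannot be reached this way.

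None of your proposed remedies closes this:
\begin{itemize}
\item Choosing $\gamma_2$ small relative to $\delta$ does not help, because for such pairs there is no decaying factor to absorb $t_k^{C\gamma_2}$.
\item Summing geometrically in $k-j$ (your fallback) just returns $\sum_k t_k^{C\gamma_2}\operatorname{Leb}(R_k)$.
\item Treating far pairs ($t_k/t_j\geq t_k^{\gamma_3}$) by mixing and near pairs by Cauchy--Schwarz also fails. Each $k$ has $\asymp\gamma_3 k$ near partners, which gives $\gg\gamma_3 n$ in the example, against $\Sigma_n^2\asymp\log^2 n$.
\end{itemize}

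The missing idea is to smooth at a \emph{fixed} relative scale $\varepsilon$, which is what the paper does. It shrinks the box by $\varepsilon r_k$ to $R_k^-$, multiplies $\widehat{\ind}_{R_k^-}$ by the sharp cusp cutoff $\chi_k$, and convolves with a fixed bump $\vartheta$ on $G$ satisfying $\|\vartheta\|_{C^{l_0}}\leq\varepsilon^{-B'}$. All derivatives of $\varphi_k=\vartheta*(\chi_k\widehat{\ind}_{R_k^-})$ then fall on $\vartheta$, so $\mathcal S_{2,l_0}(\varphi_k)\ll_\varepsilon\|\widehat{\ind}_{R_k}\|_{L^2(\mu_\X)}$ with no power of $t_k$. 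The cutoff inflates only the $C^{l_0}$-norm, which in \eqref{eq:mix 1} and \eqref{eq:mix 2} always comes paired with $t_k^{-\delta}$ or $t_j^{-\delta}$ and is harmless once $\gamma_1,\gamma_2$ are small. The price is a main-term loss of $C_m\varepsilon$, removed at the end by choosing $C_m\varepsilon\leq\eta$. Your attempt to make the main term lossless via the scale $t_k^{-\gamma_2}$ is exactly what breaks the variance bound. A fixed-scale smoothing of $\ind_{R_k}$ in $\R^{m+1}$, combined with a cusp cutoff having bounded derivatives, would also work.

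A smaller point concerns Step~3. The claim $\operatorname{Leb}(R_k)\leq t_k^{(m+1)\gamma_1}=o(\Sigma_n)$ is false in general, since $t_k$ grows exponentially in $k$. However, a lower bound needs no jump control: take $n_\ell$ minimal with $\Sigma_{n_\ell}\geq(1+\eta)^\ell$ and use that $n\mapsto\sum_{k\leq n}\rS_k(\theta)$ is non-decreasing.
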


\begin{proof}
We fix two parameters $\varepsilon,\gamma_2>0$, to be specified later,
and define
\[
    R_k^-
    =
    [\varepsilon r_k,(1-\varepsilon)r_k]^m
    \times
    [(\tau^{-1}+\varepsilon)r_k,(1-\varepsilon)r_k].
\]
Let $\chi_k:\X\to[0,1]$ be the function
\[
    \chi_k(x)
    =
    \begin{cases}
        1, & \text{if }\inj(x)\geq t_k^{-\gamma_2},\\
        0, & \text{otherwise}.
    \end{cases}
\]
We also let
\[
    \vartheta:
    \left\{
        I+A\in\SL_{m+1}(\R):
        \sum_{i,j}|A_{ij}|\leq \varepsilon/10
    \right\}
    \longrightarrow \R_{\geq 0}
\]
be a smooth bump function, extended by zero to $G$, satisfying $m_G(\vartheta)=1$, where $m_G$
denotes Haar measure on $G$, normalized so that a fundamental domain
for the action of $\Gamma$ on $G$ has measure $1$, and
\[
    \|\vartheta\|_{C^{l_0}}\leq \varepsilon^{-B'},
\]
where $B'=B'(G,l_0)>0$.

With this notation, define
\[
    \varphi_k
    =
    \vartheta*(\chi_k\widehat{\ind}_{R_k^-}).
\]
Then $\varphi_k$ is a smooth bounded function on $\X$ satisfying
\[
    0\leq \varphi_k\leq \widehat{\ind}_{R_k}.
\]
Consequently,
\[
    \rS_k(\theta)
    \geq
    \varphi_k(a_ku(\theta)\Gamma).
\]
Thus, it suffices to obtain a lower bound for
\[
    \sum_{k\in K_{\mathrm{small}}}
    \varphi_k(a_ku(\theta)\Gamma).
\]

The proof is essentially identical to that of~\cite[Prop.~9.12]{benard2026};
we briefly indicate the required modifications. In the order fixed above,
we choose $\gamma_2>0$ sufficiently small and then $\gamma_1$ sufficiently
small compared with $\gamma_2$.

Under these choices, the estimates for
$\mu_\X(\varphi_k)$, $\|\varphi_k\|_{C^{l_0}}$, and
$\mathcal{S}_{2,l_0}(\varphi_k)$ obtained in~\cite[Lem.~9.10]{benard2026}
continue to hold in the present setting, using the same arguments
together with Lemma~\ref{lem:imp change}.

For $k\in K_{\mathrm{small}}$, define the random variables
\[
    Y_k:\R^m\to\R,
    \qquad
    Y_k(\theta)
    =
    \varphi_k(a_ku(\theta)\Gamma),
\]
and set
\[
    y_k=\mu_\X(\varphi_k),
    \qquad
    Z_k=Y_k-y_k.
\]
By arguments as in~\cite[Lem.~9.11]{benard2026},
\[
    \mathbb{E}(Z_k^2)\ll y_k,
\]
provided that $y_k\geq1$. Combining this estimate with
Lemma~\ref{lem:imp change} and arguing as in
\cite[Prop.~9.12]{benard2026}, whose hypothesis
\cite[(9.18)]{benard2026} holds for every $k\in K_{\mathrm{small}}$ by the
definition of $K_{\mathrm{tiny}}$, we obtain that, for every finite subset
$J\subset K_{\mathrm{small}}$ satisfying
$\inf J\gg_{\gamma_2}1$,
\begin{align}
\label{eq:exp K small}
    \mathbb{E}\left[
        \left(\sum_{j\in J}Z_j\right)^2
    \right]
    \ll
    \left(
        1+\sum_{j\in J}y_j
    \right)^{3/2}.
\end{align}

Then, by the abstract probabilistic result
\cite[Lem.~9.13]{benard2026}, together with~\eqref{eq:exp K small}
and~\eqref{eq:assum K small div},
for $\nu$-almost every $\theta$ and all sufficiently large $n$,
\[
    \sum_{\substack{k\in K_{\mathrm{small}}\\ k\leq n}}
    \rS_k(\theta)
    \geq
    \left(1-\tau^{-1}-C_m\varepsilon\right)
    \sum_{\substack{k\in K_{\mathrm{small}}\\ k\leq n}}
    \psi(\tau^k)\tau^k,
\]
where $C_m>0$ depends only on $m$. Choosing $\varepsilon$ so that
$C_m\varepsilon\leq\eta$ gives~\eqref{eq:estimate K small}.
\end{proof}

\subsubsection{Conclusion}

We may ignore $K_{\mathrm{tiny}}$ since, by the definition of
$K_{\mathrm{tiny}}$,
\[
    \sum_{k\in K_{\mathrm{tiny}}}\psi(\tau^k)\tau^k<\infty.
\]
Indeed, write $K_{\mathrm{tiny}}=K'\cup K''$, where $K'$ consists of those $k$ satisfying the
first condition in~\eqref{eq:assum} and $K''$ of those satisfying the second.
If $k\in K''$, then
$\psi(\tau^k)\tau^k<\log^{-2}(\tau^k)=(k\log\tau)^{-2}$, and the corresponding series
converges. As for $K'$, recall from Remark~\ref{rem:balanced} that $\alpha\leq1/m$, so that
$2-m\alpha\geq1>0$; since $\psi(\tau^k)<1$ and $\tau^k>1$, the first condition
in~\eqref{eq:assum} can hold only if $2-(2m+1)\alpha>0$, in which case it gives
\[
    \psi(\tau^k)<(\tau^k)^{-\frac{2-m\alpha}{2-(2m+1)\alpha}},
    \qquad\text{and hence}\qquad
    \psi(\tau^k)\tau^k<(\tau^k)^{-\frac{(m+1)\alpha}{2-(2m+1)\alpha}} .
\]
As the exponent on the right is negative, the series over $K'$ converges as well.

Similarly, we may ignore $K_{\mathrm{big}}$ or $K_{\mathrm{small}}$
whenever the corresponding sum
\[
    \sum_{k\in K_{\mathrm{big}}}\psi(\tau^k)\tau^k
    \qquad\text{or}\qquad
    \sum_{k\in K_{\mathrm{small}}}\psi(\tau^k)\tau^k
\]
is finite.
For every remaining set, the required lower bound follows from
Lemmas~\ref{lem:lower K big} and~\ref{lem:lower K small}. Since
\[
    \sum_{k\in\N}\psi(\tau^k)\tau^k=\infty,
\]
at least one of the two sums above is infinite, the sets with a
convergent sum contribute $O(1)$, and Lemmas~\ref{lem:lower K big}
and~\ref{lem:lower K small} bound the rest from below by
$(1-\tau^{-1}-\eta)\sum_{k\leq n}\psi(\tau^k)\tau^k$ up to $O(1)$. Hence
\eqref{eq:red 1 div case}
follows, which completes the proof of the lower bound.

\subsection{Upper bound}
\label{subsec:div upper}

The proof of the upper bound follows along the same lines as that of the
lower bound. Namely, for $k\in\N$ and $\tau>1$, define
\[
    \rS_{\tau,k}^+(\theta)
    = \#\left\{
(p_1,\ldots,p_m,q)\in\Z^m\times\N:
\begin{array}{l}
q\in[\tau^{k},\tau^{k+1}],\\
0<p_i+\theta_iq<\psi_i(\tau^k)
\text{ for all }1\leq i\leq m
\end{array}
\right\}.
\]
Here the range of $q$ has been shifted by one and left closed at both
ends: for $q\geq\tau^k$ one has $\psi_i(q)\leq\psi_i(\tau^k)$, and the
overlap of consecutive ranges only weakens the upper bound.
As in the lower bound, it is enough to prove that for every
$\tau\in(1,2]$ and for $\nu$-almost every $\theta\in\R^m$, for every
$\eta>0$ and all sufficiently large $n$,
\begin{align}
\label{eq:red 2 div case}
    \sum_{k=1}^n \rS_{\tau,k}^+(\theta)
    \leq
    (\tau-1+\eta)
    \sum_{k=1}^n\psi(\tau^k)\tau^k.
\end{align}
Indeed, let $\tau^n\leq T<\tau^{n+1}$. Since $\psi_i$ is
non-increasing, every pair counted by $\cN^+_T(\theta,\Psi)$
with $q\geq1$ lies in some range $q\in[\tau^k,\tau^{k+1}]$ with $k\leq n$ and
satisfies  $0<p_i+\theta_iq<\psi_i(q)\leq\psi_i(\tau^k)$, so that
\[
    \cN^+_T(\theta,\Psi)\ \leq\ \sum_{k=0}^n\rS^+_{\tau,k}(\theta)
    \ =\ \rS^+_{\tau,0}(\theta)+\sum_{k=1}^n\rS^+_{\tau,k}(\theta),
\]
the term $\rS^+_{\tau,0}(\theta)$ being finite and independent of $T$, hence
negligible compared with $S_\Psi(T)$.
Moreover, with $\varepsilon\in(0,\tfrac12)$ and $k\geq k_0(\tau,\varepsilon)$ as in the proof of
Lemma~\ref{lem:enough for power series},
\[
    \sum_{q=\lceil\tau^{k-1}\rceil}^{\lceil\tau^{k}\rceil-1}\!\!\psi(q)
    \ \geq\
    \bigl(\lceil\tau^{k}\rceil-\lceil\tau^{k-1}\rceil\bigr)\psi(\tau^k)
    \ \geq\
    (1-\varepsilon)(1-\tau^{-1})\,\psi(\tau^k)\tau^k ,
\]
whence $(1-\varepsilon)(1-\tau^{-1})\sum_{k\leq n}\psi(\tau^k)\tau^k\leq(1+\varepsilon)S_\Psi(T)$ for
$T$ large. Together with~\eqref{eq:red 2 div case} this gives
\[
    \limsup_{T\to\infty}\frac{\cN^+_T(\theta,\Psi)}{S_\Psi(T)}
    \ \leq\
    \frac{(1+\varepsilon)(\tau-1+\eta)}{(1-\varepsilon)(1-\tau^{-1})}
    \ =\
    \frac{(1+\varepsilon)\,\tau}{1-\varepsilon}\Bigl(1+\frac{\eta}{\tau-1}\Bigr),
\]
and letting $\varepsilon\to0$, then $\eta\to0$, then $\tau\to1^+$ along countable
sequences as before yields
\[
    \limsup_{T\to\infty}\frac{\cN^+_T(\theta,\Psi)}{S_\Psi(T)}\leq1
\]
for $\nu$-almost every $\theta$. Thus~\eqref{eq:red 1 div case} and~\eqref{eq:red 2 div case}
together imply~\eqref{eq:one sided asym}, which
by~\eqref{eq:orthant decomposition} gives Proposition~\ref{prop:div case}.
We now prove~\eqref{eq:red 2 div case}.

We fix $\tau>1$ and, as before, write
$\rS_{\tau,k}^+(\theta)=\rS_k^+(\theta)$. We note that
\[
    \rS_k^+(\theta)
    =
    \widehat{\ind}_{P_k}(a_ku(\theta)\Gamma),
\]
where \[
    P_k=[0,r_k]^m\times[r_k,\tau r_k],
\]
and $r_k$ and $a_k$ are defined in
\eqref{eq:def r k} and \eqref{eq:def a k}, respectively.

We again fix $\gamma_1\in(0,1/2)$ as above and partition $\N$ into
$K_{\mathrm{tiny}}$, $K_{\mathrm{big}}$, and $K_{\mathrm{small}}$.

\subsubsection{Upper asymptotics over $K_{\mathrm{big}}$}

The argument is identical to that for the lower bound. Namely, using the
upper bound in \cite[Lem.~9.6]{benard2026} in place of its lower bound,
and proceeding exactly as in the proof of
\eqref{eq:sum over K big}, we obtain that for every $\eta>0$,
\[
    \sum_{\substack{k\in K_{\mathrm{big}}\\k\leq n}}
    \rS_k^+(\theta)
    \leq
    (\tau-1+\eta)
    \sum_{\substack{k\in K_{\mathrm{big}}\\k\leq n}}
    \psi(\tau^k)\tau^k
\]
for $\nu$-almost every $\theta$ and all sufficiently large $n$, provided that
\[
\sum_{k\in K_{\mathrm{big}}} \psi(\tau^k)\tau^k= \infty.
\]

\subsubsection{Upper asymptotics over $K_{\mathrm{small}}$}

We fix $\gamma_2,\varepsilon>0$ sufficiently small and let
$\chi_k$ and $\vartheta$ be as in the proof of the lower bound. Define
\[
    P_k^+
    =
    [-\varepsilon r_k,(1+\varepsilon)r_k]^m
    \times[(1-\varepsilon)r_k,(\tau+\varepsilon)r_k]
\]
and
\[
    \varphi_k^+
    =
    \vartheta*(\chi_k\widehat{\ind}_{P_k^+}).
\]
The truncation by $\chi_k$ requires an additional argument when
comparing $\varphi_k^+$ with $\widehat{\ind}_{P_k}$. By
Lemma~\ref{lem:dist is log}, for $\nu$-almost every $\theta$ and all
sufficiently large $k$,
\[
    \operatorname{dist}(a_ku(\theta)\Gamma,\Gamma)
    \ll \log\log t_k.
\]
Since the injectivity radius is log-Lipschitz, this implies, for some
$C_0=C_0(m)>0$,
\[
    \inj(g^{-1}a_ku(\theta)\Gamma) \gg \inj(a_ku(\theta)\Gamma)
    \gg (\log t_k)^{-C_0}
    \gg t_k^{-\gamma_2}
\]
for all sufficiently large $k$ and $g \in \supp(\vartheta)$. Hence
\[
    \chi_k(g^{-1}a_ku(\theta)\Gamma)=1
\]
for all sufficiently large $k$ and $g \in \supp(\vartheta)$.

Consequently, for $\nu$-almost every $\theta$ and all sufficiently
large $k$,
\[
    \varphi_k^+(a_ku(\theta)\Gamma)
    =
    \bigl(\vartheta*\widehat{\ind}_{P_k^+}\bigr)
    (a_ku(\theta)\Gamma).
\]
By the choice of $P_k^+$ and $\vartheta$, we have
\[
    \vartheta*\widehat{\ind}_{P_k^+}
    \geq
    \widehat{\ind}_{P_k}.
\]
Thus, for $\nu$-almost every $\theta$ and all sufficiently large $k$,
\[
    \varphi_k^+(a_ku(\theta)\Gamma)
    \geq
    \widehat{\ind}_{P_k}(a_ku(\theta)\Gamma)
    =
    \rS_k^+(\theta).
\]
The estimates for the mean, Sobolev norms, and correlations of
$\varphi_k^+$ are obtained exactly as in the lower-bound argument.
Consequently, the argument proving
Lemma~\ref{lem:lower K small} applies verbatim and gives
\[
    \sum_{\substack{k\in K_{\mathrm{small}}\\k\leq n}}
    \rS_k^+(\theta)
    \leq
    (\tau-1+\eta)
    \sum_{\substack{k\in K_{\mathrm{small}}\\k\leq n}}
    \psi(\tau^k)\tau^k
\]
for $\nu$-almost every $\theta$ and all sufficiently large $n$.

\subsubsection{Conclusion}

As in the proof of the lower bound, we may ignore
$K_{\mathrm{tiny}}$, as well as $K_{\mathrm{big}}$ or
$K_{\mathrm{small}}$ whenever the corresponding sum
\[
    \sum_{k\in J}\psi(\tau^k)\tau^k
\]
is finite. More precisely, for every fixed
$J\subset\N$ satisfying
\[
    \sum_{j\in J}\psi(\tau^j)\tau^j<\infty,
\]
we have, using arguments as in the proof of
Proposition~\ref{prop:convergence case}, whose hypothesis~\eqref{eq:e5} is
assumed here, that for $\nu$-almost every $\theta$,
\[
    \sum_{j\in J}\rS_{\tau,j}^+(\theta)<\infty.
\]

For each of the remaining sets, the preceding estimates give the
required upper bound. Since
\[
    \sum_{k\in\N}\psi(\tau^k)\tau^k=\infty,
\]
the contribution from the sets for which the corresponding series
converges is negligible. Hence, for $\nu$-almost every $\theta$ and all
sufficiently large $n$,
\[
    \sum_{k=1}^n\rS_k^+(\theta)
    \leq
    (\tau-1+\eta)
    \sum_{k=1}^n\psi(\tau^k)\tau^k.
\]
This proves \eqref{eq:red 2 div case} and, hence, the upper bound. By the reduction carried out at the beginning of
Section~\ref{sec:div case}, this establishes~\eqref{eq:one sided asym} for
$\boldsymbol{\omega}=(1,\ldots,1)$, and hence, by the remark
following~\eqref{eq:one sided asym}, for every
$\boldsymbol{\omega}\in\{\pm1\}^m$. Since
\eqref{eq:orthant decomposition} holds for $\nu$-almost every $\theta$, we
conclude that
$\cN_T(\theta,\Psi)\sim2^m\sum_{q\leq T}\psi(q)$ for $\nu$-almost every
$\theta$. This completes the proof of Proposition~\ref{prop:div case}.
\hfill\qedsymbol

\section{Proof of the counting theorem}
\label{sec:proof of main}

\begin{proof}[Proof of Theorem~\ref{main thm:counting}]
We first check that $\nu=\nu_1\otimes\cdots\otimes\nu_m$ is self-similar on $\R^m$
in the sense of~\cite{benard2026}. Write
$\nu_i=\sum_{j\in E_i}\lambda_{i,j}(\phi_{i,j})_*\nu_i$ with
$\phi_{i,j}(x)=cx+y_{i,j}$, and for
$\mathbf j=(j_1,\ldots,j_m)\in E_1\times\cdots\times E_m$ put
$\lambda_{\mathbf j}=\lambda_{1,j_1}\cdots\lambda_{m,j_m}$ and
$\Phi_{\mathbf j}(x)=c\,\mathrm{Id}\,x+(y_{1,j_1},\ldots,y_{m,j_m})$. Because
the $m$ contraction ratios are equal to one and the same $c$, each
$\Phi_{\mathbf j}$ is a \emph{similarity} of $\R^m$ --- this is where that
hypothesis is used --- and
$(\Phi_{\mathbf j})_*\nu=\bigotimes_{k=1}^m(\phi_{k,j_k})_*\nu_k$ shows that
$\nu$ is stationary under $\mathcal L=\sum_{\mathbf j}\lambda_{\mathbf j}
\delta_{\Phi_{\mathbf j}}$, which has a finite exponential moment since it is
finitely supported. It is irreducible: for each $i$ the maps $\phi_{i,j}$ have
no common fixed point, so $\nu_i$ is not a Dirac mass and
$\supp(\nu)=\supp(\nu_1)\times\cdots\times\supp(\nu_m)$ contains the $2^m$
vertices of a nondegenerate box, hence affinely spans $\R^m$, while any affine
subspace invariant under all $\Phi_{\mathbf j}$ contains $\supp(\nu)$.

By \cite[Thm.~1.2]{benard2026}, the measure $\nu$ satisfies the hypotheses of
Theorem~\ref{thm:Double equi} for
\[
    \bfw=(1/m,\ldots,1/m).
\]
Indeed, the diagonal flow of \cite[Thm.~1.2]{benard2026} is
$a(T)=\operatorname{diag}\bigl(T^{\frac{1}{m+1}},\ldots,T^{\frac{1}{m+1}},
T^{-\frac{m}{m+1}}\bigr)$, and the unipotent $u(\theta)$ is the same in both
papers, so that
\[
    g_t(1/m,\ldots,1/m)=a\bigl(e^{t(m+1)/m}\bigr).
\]
The norm occurring in the error term of
\cite[Thm.~1.2]{benard2026} is the $C^{l_0}$-norm with
$l_0=\bigl\lceil\tfrac12\dim SO(m+1)\bigr\rceil$. Substituting
$T=e^{t(m+1)/m}$ in the bound $C\,\inj(x)^{-1}T^{-c}\|f\|_{C^{l_0}}$ therefore
turns it into~\eqref{eq:e3}, with
\[
    \delta_{\bfw}=\frac{(m+1)c}{m},
    \qquad
    c_{\bfw}=1 .
\]
 Therefore Theorem~\ref{thm:Double equi}, applied with $\alpha/2$ in place
of $\alpha$, yields the second hypothesis of Proposition~\ref{prop:div case},
while Proposition~\ref{prop:change of weights}, applied at $x=\Gamma$, yields
\eqref{eq:e5} for every weight in $[\alpha/2,1]^m$ and hence the remaining
hypotheses of Propositions~\ref{prop:convergence case}
and~\ref{prop:div case}.
Finally, for each $i$ the maps $\phi_{i,j}$ have no common fixed
point, so $\nu_i$ is non-atomic by Lemma~\ref{lem:frostman}; since
$\nu(\{\theta_i=c\})=\nu_i(\{c\})$, the coordinate projections of $\nu$ are
non-atomic, which is the last hypothesis of
Proposition~\ref{prop:div case}.
Applying these two propositions gives the convergence and divergence
cases of Theorem~\ref{main thm:counting}, respectively. This completes
the proof.
\end{proof}

\bibliographystyle{amsplain}
\bibliography{Biblio}

\end{document}